\newtheorem{theorem}{Theorem}[section]
\newtheorem{corollary}[theorem]{Corollary}
\newtheorem{proposition}[theorem]{Proposition}
\newtheorem{definition}[theorem]{Definition}
\newtheorem{lemma}[theorem]{Lemma}
\newtheorem{conjecture}[theorem]{Conjecture}
\newtheorem*{theorem*}{Theorem}
\newtheorem*{proposition*}{Proposition}
\newtheorem*{definition*}{Definition}
\newtheorem*{lemma*}{Lemma}
\newtheorem*{claim*}{Claim}
\newtheorem*{corollary*}{Corollary}
\newtheorem*{convention*}{Convention}
\theoremstyle{definition}
\newtheorem{convention}[theorem]{Convention}
\newtheorem{question}{Question}
\theoremstyle{remark}
\newtheorem{rem}[theorem]{Remark}
\newtheorem*{rem*}{Remark}
\newcommand{\wt}[1]{\widetilde{#1}}
\newcommand\bR{\mathbb R}
\newcommand\bH{\mathbb H}
\newcommand{\R}{\mathbb R}
\newcommand\eps{\varepsilon}
\newcommand\orb{ \mathcal O }
\newcommand\fs{\mathcal F^{s} }
\newcommand\fu{\mathcal F^{u} }
\newcommand{\cF}{\mathcal{F}}
\newcommand{\cP}{\mathcal{P}}
\newcommand{\cO}{\mathcal{O}}
\newcommand{\Pmod}{\mathrm{PMod}}
\newcounter{notes}%[page]   %Le 2eme argument fait reinitialiser les numeros de notes a chaque page
\title{A Sm\"org\aa sbord of (bi)contact structures, Reeb flows and pseudo-Anosov flows}
\author[Thomas Barthelm\'e]{Thomas Barthelm\'e}
\address{Queen's University, Kingston, Ontario}
\email{thomas.barthelme@queensu.ca}
\urladdr{sites.google.com/site/thomasbarthelme}
\begin{document}

 \begin{abstract}
 This note is meant as an invitation to the study of certain families of contact structures, centering around the following question: ``How much can one relate the dynamics of two distinct Reeb flows of the same contact structure?''
We gather some results as well as state many more questions and conjectures around that theme.
 \end{abstract}

 \maketitle
 
\section{Introduction}

A given (coorientable) contact structure $\xi$ on a $3$-manifold $M$ admits many Reeb flows. From a dynamical point of view, different Reeb flows of the same contact structure may, a priori, behave differently. However, in some cases, dynamical similarities have started to appear. There are for instance many examples of contact structures for which every Reeb flows has positive topological entropy (see, e.g., \cite{Alv16,CDHR24,CDR23,FHV}). Positivity of topological entropy is not a strong dynamical relationship between the Reeb flows, as it only implies that each flow exhibits some type of hyperbolic behavior. However, several of these results give more, as they in fact show some correspondence between the conjugacy classes of loops that admit a periodic Reeb orbit.
In some cases, one can in fact hope for a very strong dynamical relationship. In \cite{BMB}, J.~Bowden, K.~Mann, and I proved the following result:
\begin{theorem}[\cite{BMB}]\label{thm_BMB}
Let $R_1$, $R_2$ be two Anosov Reeb flows of two contact structures $\xi_1,\xi_2$. Then the Anosov flows $R_1, R_2$ are orbit equivalent if and only if $\xi_1$ and $\xi_2$ are contactomorphic.
\end{theorem}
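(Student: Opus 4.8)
For a contact Anosov flow one has $TM=E^s\oplus E^u\oplus\bR R$ with $\xi=\ker\alpha=E^s\oplus E^u$, while the dynamically defined weak foliations $\fs,\fu$ have tangent planes $E^s\oplus\bR R$ and $E^u\oplus\bR R$; the Reeb field $R$ is positively transverse to $\xi$, and $\fs,\fu$ meet $\xi$ along the line fields $E^s,E^u$. My guiding principle would be that both the contactomorphism type of $\xi$ and the orbit--equivalence class of $R$ are governed by a single object: the pair $(\fs,\fu)$ up to isotopy, equivalently the bifoliated orbit space $\orb$ together with its $\pi_1(M)$--action. On the contact side this is Mitsumatsu's bicontact picture---smoothing $\fs,\fu$ by Eliashberg--Thurston yields a negative/positive contact pair $(\xi_-,\xi_+)$ whose common transverse line field directs a flow orbit equivalent to $R$---and on the dynamical side it is the rigidity theory of (pseudo--)Anosov orbit spaces. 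The whole proof then consists in making these two translations precise and mutually inverse.

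For the implication ``contactomorphic $\Rightarrow$ orbit equivalent'', I would first reduce to a fixed contact structure. A contactomorphism $\phi\colon(M_1,\xi_1)\to(M_2,\xi_2)$ pushes $R_1$ to the Reeb field of $\phi_*\alpha_1$, which is again a contact form for $\xi_2$, and Anosovity is preserved by the diffeomorphism $\phi$; so it suffices to show that any two Anosov Reeb flows $R,R'$ of one fixed $\xi$ are orbit equivalent. The contact forms of $\xi$ form the contractible space $\{e^f\alpha_0\}$, inside which ``the Reeb flow is Anosov'' is a $C^1$--open condition (and the Reeb field depends continuously on $f$); the key claim I would aim for is that the Anosov locus lies in a single connected component, so that structural stability yields orbit equivalence along any connecting path.

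For the converse ``orbit equivalent $\Rightarrow$ contactomorphic'', I would show that $[\xi_i]$ is an invariant of the orbit--equivalence class of $R_i$. An orbit equivalence $h\colon M_1\to M_2$ sends flow lines to flow lines, hence carries $\fs_1,\fu_1$ to $\fs_2,\fu_2$; equivalently it induces a $\pi_1$--equivariant homeomorphism of the bifoliated orbit spaces $\orb_1\to\orb_2$ (the rigidity of orbit equivalences of (pseudo--)Anosov flows, following Barbot--Fenley). Feeding the matched pairs $(\fs_i,\fu_i)$ into the bicontact construction of the first paragraph, the induced bicontact pairs correspond, and a smoothing of $h$ in its isotopy class should upgrade this matching to a contactomorphism $\xi_1\to\xi_2$.

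The step I expect to be the main obstacle is exactly the rigidity required to make the correspondence $\xi\leftrightarrow(\fs,\fu)$ well defined and injective at the level of contactomorphism and orbit--equivalence classes, rather than merely up to homotopy of plane fields or topological conjugacy. Two issues must be confronted: (i) connectedness of the space of Anosov Reeb forms of a fixed $\xi$, needed to close the first implication; and (ii) passing from the purely topological matching of the weak foliations produced by $h$ to a genuine smooth contactomorphism. The difficulty in (ii) is the regularity loss---$\fs,\fu$ are only $C^1$ (indeed merely H\"older along the transverse direction) and $h$ is only a homeomorphism, whereas a contactomorphism must be smooth and must preserve a rigid plane field. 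I would attack it through the Eliashberg--Thurston smoothings, showing that a smoothing of $h$ carries $(\xi_-^1,\xi_+^1)$ to a bicontact pair isotopic to $(\xi_-^2,\xi_+^2)$ and invoking that isotopic bicontact pairs have contactomorphic underlying contact structures; making this last isotopy global and compatible with the $\pi_1$--action is where the real work concentrates.
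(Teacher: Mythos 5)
Both directions of your proposal stall at exactly the points you yourself flag, and in each case the paper's actual proof uses a different mechanism that avoids them entirely. For ``contactomorphic $\Rightarrow$ orbit equivalent'', your plan is to connect two Anosov Reeb forms of a fixed $\xi$ by a path inside the Anosov locus of $\{e^f\alpha_0\}$ and invoke structural stability. But connectedness of that locus is not a routine lemma: it is unproven, essentially as hard as the statement you want, and nothing in your sketch indicates how to obtain it. The proof in \cite{BMB} never produces such a path. Instead it uses cylindrical contact homology, which is invariant under \emph{arbitrary} changes of nondegenerate dynamically convex contact form (the two forms are joined through the contractible space of all contact forms, not through Anosov ones). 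Since a transversally orientable Reeb-Anosov flow has only ``even'' hyperbolic orbits, all differentials vanish, the homology in each free homotopy class of a periodic orbit is nonzero, and one gets $\cP(R_1)=\cP(R_2)$ (Proposition \ref{prop_Reeb_flows_same_free_hom_data}). One then concludes with the rigidity statement of Theorem \ref{thm_free_homotopy_data}: Reeb-Anosov flows are skew (Barbot), hence $\bR$-covered, and for such flows the free homotopy data determines the flow up to isotopic equivalence. So the dynamical rigidity enters through periodic-orbit data, not through a deformation argument.

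For ``orbit equivalent $\Rightarrow$ contactomorphic'', your route has two gaps. First, you conflate the Reeb contact structure $\xi=E^{ss}\oplus E^{uu}$, which is \emph{transverse} to the flow, with the Mitsumatsu--Eliashberg--Thurston bicontact pair, each member of which \emph{contains} the flow direction; the relation between the two --- that the Reeb contact structure of a contact Anosov flow is isotopic to the positive member of a supporting bicontact pair --- is itself a nontrivial result (attributed in this paper to Bowden--Massoni, work in preparation) and cannot be used for free. Second, ``smoothing $h$'' cannot work as stated: a smoothing of a homeomorphism no longer carries the foliations or plane fields anywhere in particular, and even if it did, matching plane fields up to homotopy is far weaker than contact isotopy (tight and overtwisted structures can be homotopic as plane fields), so Gray stability is unavailable --- which is precisely the regularity/rigidity chasm you name but do not cross. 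The paper's proof replaces all of this with a genuinely contact-geometric input: from an isotopic equivalence of the two Reeb-Anosov flows one constructs, using Asaoka--Bonatti--Marty \cite{ABM}, a single open-book decomposition that supports both $\xi_1$ and a structure isotopic to $\xi_2$, and then Giroux's correspondence forces $\xi_1$ and $\xi_2$ to be isotopic. That open-book/Giroux step is what converts a topological matching of flows into a contactomorphism, and nothing in your sketch substitutes for it.
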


In this note, I want to invite the reader to think about how much more general such a result could possibly be made. The most general question I would like to ask is 
\begin{question}\label{q_unprecise}
Given two Reeb flows $R_1$, $R_2$, of the same contact structure $\xi$. How ``related'' are the dynamics of $R_1$, $R_2$?
\end{question}

Here, ``related'' could take many different meanings from the strongest (e.g., orbit equivalent, as in the above theorem), to some much weaker form (e.g., both have positive topological entropy, as discussed previously).
In this note, I will consider two different notions of what ``related'' should mean. The first, and a priori weakest, involves the \emph{free homotopy data}, defined as follows
\begin{definition}\label{def_free_hom_data}
Given a flow $\phi$ on $M$, its \emph{free homotopy data}, denoted by $\cP(\phi)$ is the set of free homotopy classes represented by unoriented periodic orbits of $\phi$. That is,
\[
\cP(\phi) :=\{ [\alpha] \in [\pi_1(M)] \mid [\alpha] \text{ or } [\alpha^{-1}] \text{ is represented by a periodic orbit of } \phi\},
\]
where $[\alpha]$ denotes the conjugacy class of an element of $\pi_1(M)$.
\end{definition}
The free homotopy data was shown in \cite{BMB,BFM} to be a complete invariant for a lot of pseudo-Anosov flows (see Theorem \ref{thm_free_homotopy_data} below for a precise statement). In my obviously very biased opinion, this result indicates that studying the free homotopy data of more general classes of $3$-dimensional flows should be interesting.
So, a rephrasing of Question \ref{q_unprecise} using this notion is:
\begin{question}\label{que_reeb_same_free_homotopy_data}
Given two Reeb flows $R_1$, $R_2$, of the same contact structure $\xi$.
Under what conditions on the Reeb flows $R_i$, do we have that $\cP(R_1)= \cP(R_2)$?
Does there exist a Reeb flow $R_{\mathrm{min}}$ of $\xi$ such that for every other Reeb flow $R$ of $\xi$, $\cP(R_{\mathrm{min}})\subset \cP(R)$
\end{question}

Deep works in contact geometry, that we will review in section \ref{sec_free_homotopy_data}, actually give some conditions answering Question \ref{que_reeb_same_free_homotopy_data}, at least when $\xi$ is left invariant by some Anosov flow, and this was crucially used in the proof of Theorem \ref{thm_BMB}. 
Note that a recent very interesting work of Jonathan Zung \cite{Zung} gives conditions for certain flows to share the same free homotopy data using \emph{stable Hamiltonian structures}, instead of contact ones, and obtains a generalization of Theorem \ref{thm_BMB}.

Another notion of what ``related'' can mean that I would like to advertise, is that of two flows having the same \emph{pseudo-Anosov model}, in the following sense:
\begin{definition}\label{def_pAmodel}
Let $\phi$ be any flow on $M$, and $\psi$ a pseudo-Anosov flow on $M$. We say that $\psi$ is \emph{a pseudo-Anosov model} of $\phi$ if there exists $K$ a $\phi$-invariant compact subset of $M$ and a continuous and surjective map $h \colon K \to M$ that takes flowlines of $ \phi|_K$ to flowlines of $\psi$
\end{definition}
For surface homeomorphisms, Handel \cite{Han85} showed that if a homeomorphism $g$ is homotopic to a pseudo-Anosov map $f$, then the suspension of $f$ is a pseudo-Anosov model of the suspension of $g$ as in the definition above. Thus, I like to think of pseudo-Anosov models and flows admitting pseudo-Anosov models, as an interesting class in a hypothetical effort to try to generalize the Nielsen--Thurston Classification to, at least some, $3$-manifold flows.

With this definition, the other rephrasing of Question \ref{q_unprecise} that I would like to advertise is:
\begin{question}\label{q_general_samepAmodel}
Given two Reeb flows $R_1$, $R_2$, of the same contact structure $\xi$.
Under what conditions on the contact structure $\xi$, and/or the Reeb flows $R_i$, do we have that $R_1$ and $R_2$ admit the same pseudo-Anosov model?
\end{question}

Notice that an Anosov flow is by definition its own pseudo-Anosov model, and thus Theorem \ref{thm_BMB} gives a condition for a positive answer to Question \ref{q_general_samepAmodel}.
Obviously, one would like to also decide when a given Reeb flow even admits a pseudo-Anosov model:
\begin{question}\label{que_existencepAmodel}
Given a Reeb flow $R$ of a contact structure $\xi$. 
Under what conditions on the contact structure $\xi$, and/or the Reeb flow $R$, can we ensure that $R$ admits a pseudo-Anosov model?
\end{question}

While the existence of a pseudo-Anosov model would give a lot of information about the dynamics of the Reeb flow, it does not give \emph{all} the information. In particular, it is not clear that a pseudo-Anosov model is necessarily unique (up to orbit equivalence). Knowing which Reeb flows admit such a unique pseudo-Anosov model would be very interesting, as it would indicate that the pseudo-Anosov model captures in some sense all the dynamical information that is somewhat ``robust'', i.e., cannot be destroyed by small perturbations.

\begin{question}\label{q_unicity_pA_model}
Under what conditions on the flow do we have uniqueness of the pseudo-Anosov model?
\end{question}

In the rest of this note, I will recall the main step of the proof of Theorem \ref{thm_BMB} (in section \ref{sec_free_homotopy_data}) and consider subclasses of contact structures and Reeb flows that may be good candidates for studying Question \ref{q_general_samepAmodel}. In particular, I will focus in section \ref{sec_anosov_supporting} on \emph{bicontact} structures that supports Anosov flows (see Definition \ref{def_Anosov_supporting_contact}). That section also contains a few results that are not available in the existing literature about the relationships between Anosov-supporting contact structures and those supporting an Anosov Reeb flow (see Theorem \ref{thm_bitransverse_Anosov_imply_skew} and Proposition \ref{prop_Anosov_contact_implies_Anosov_supporting}).
In section \ref{sec_pAmodels}, I discuss the existence of pseudo-Anosov models.

\subsection*{Acknowledgement}

This note started as a talk I gave in the workshop ``Symplectic Geometry and Anosov Flows'' held in Heidelberg in July 2024. The aim of the talk, and subsequently of this note, was to advertise some questions at the intersection of contact geometry and hyperbolic dynamics that I find interesting, and hoped other people may be persuaded to care about. Hence this note contains many more questions and conjectures than answers. I also allowed myself to be slightly less formal than in my other articles.

I would like to thank Jonathan Bowden, Surena Hozoori, Katie Mann, Federico Salmoiraghi and Jonathan Zung for a lot of discussions related to some of the questions, or results, described below. 
I would also like to thank all the organizers, the speakers, and the attendees of that workshop, as I had fun and learned a lot. In particular, I finally know that stable is blue and unstable is red. 

Finally, I am partially supported by the NSERC (RGPIN-2024-04412), so thanks Canada.

\section{Contact Anosov structures and $\bR$-covered flows} 

We quickly recall some definitions, much more details can be found for instance in Rafael Potrie's lecture notes \cite{Potrie_notes} in the present volume.

To ease notations, we will generally write $X,Y,R,\dots$ for both vector fields and their associated flows, and write $X^t,Y^t,R^t,\dots$ when the time-parameter of the flow is actually needed.

\begin{convention}
We will always assume that our vector fields are nonsingular and at least $C^1$.
\end{convention}

 \begin{definition}% [Anosov flows]
 Let $M$ be a closed manifold equipped with a Riemannian metric.
 A vector field $X$ on $M$ is said to be (smooth) \emph{Anosov} if there exists a splitting of the tangent bundle ${TM =  \R\cdot X \oplus E^{ss} \oplus E^{uu}}$ preserved by $DX^t$ and two constants $a,b >0$ such that:
\begin{enumerate}[label = (\roman*)]
 \item For any $v\in E^{ss}$ and $t>0$,
    \begin{equation*}
     \lVert DX^t(v)\rVert \leq be^{-at}\lVert v \rVert \, ;
    \end{equation*}
  \item For any $v\in E^{uu}$ and $t>0$,
    \begin{equation*}
     \lVert DX^{-t}(v)\rVert \leq be^{-at}\lVert v \rVert\, .
    \end{equation*}
\end{enumerate}
\end{definition}
If $b=1$, we say the metric on $M$ is {\em adapted} to the flow (such a metric is also sometimes called a \emph{Lyapunov metric}).  Using an ``averaging trick" one can show that any manifold with a smooth Anosov flow $X^t$ admits an adapted metric for $X^t$.  See \cite[Prop 5.1.5]{FH_book} for a proof. 

The classical \emph{Stable Manifold Theorem} shows that an Anosov flow defines invariant foliations:
\begin{proposition}[Anosov \cite{Ano67}] \label{prop_integrable}
 If $X^t$ is a smooth Anosov flow, then the distributions $E^{ss}$, $E^{uu}$, $E^{ss} \oplus \R \cdot X$, and $E^{uu} \oplus \R \cdot X$ are uniquely integrable. The associated foliations are denoted respectively by $\cF^{ss}$, $\cF^{uu}$, $\fs$, and $\fu$ and called the strong stable, strong unstable, (weak) stable, and (weak) unstable foliations, respectively.
\end{proposition}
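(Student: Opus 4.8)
The plan is to reduce everything to the construction of the local strong stable manifolds, since the weak foliations, the unstable case, and the statement about unique integrability all follow from this by softer arguments. The central difficulty is that the distributions $E^{ss}$ and $E^{uu}$ are, a priori, only H\"older continuous, so the classical Frobenius theorem does not apply: from the infinitesimal data alone one cannot conclude integrability, let alone \emph{unique} integrability. The extra ingredient that saves us is dynamical: the integral manifolds we want are exactly the dynamically defined (strong) stable and unstable sets, and this characterization is what forces uniqueness. So I would build the manifolds by the Hadamard--Perron graph transform method rather than by any infinitesimal integrability criterion.

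Concretely, fix $x \in M$ and work with the time-$T$ map $f = X^T$ for $T$ large, chosen (using an adapted metric as above) so that $\| Df|_{E^{ss}} \| \le \lambda < 1$ and $\| Df^{-1}|_{E^{uu}} \| \le \lambda$, while the flow direction $\R\cdot X$ is neutral. In a flow box around the orbit of $x$ I would introduce coordinates adapted to the splitting together with a cone field $C^{ss}$ around $E^{ss}$ that is strictly contracted by $Df$. I then consider the space of Lipschitz graphs over $E^{ss}(x)$ whose tangent cones lie in $C^{ss}$, equipped with the $C^0$ distance, and show that the (suitably localized) graph transform is a contraction of this complete metric space. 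Its unique fixed point is the local strong stable manifold $W^{ss}_{\mathrm{loc}}(x)$, and tracking the contraction shows it coincides with the set of $y$ for which $d(X^t y, X^t x) \to 0$ exponentially as $t \to +\infty$.

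Next I would promote regularity. The graph transform produces a Lipschitz object; to see that $W^{ss}_{\mathrm{loc}}(x)$ is $C^1$ with tangent space exactly $E^{ss}$, I would apply the fiber-contraction (section) theorem of Hirsch--Pugh--Shub to the induced action on the Grassmannian bundle of candidate tangent planes, whose fiber contraction is the derivative cocycle. Globalizing is then a matter of flow-saturation: I define the leaf of $\cF^{ss}$ through $x$ to be $\bigcup_{t\in\R} X^{-t}\bigl(W^{ss}_{\mathrm{loc}}(X^t x)\bigr)$, and define the weak stable foliation by adding the flow direction, $\fs = \bigcup_{t \in \R} X^t \cF^{ss}$, so that its leaves are tangent to $E^{ss}\oplus \R\cdot X$. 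The strong and weak unstable foliations $\cF^{uu}$ and $\fu$ are obtained verbatim by replacing $X$ with $X^{-1}$.

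The last point, unique integrability, is where the dynamics does the essential work, and I expect it to be the subtlest step. Suppose $N$ is any $C^1$ integral manifold tangent to $E^{ss}$ through $x$. Along $N$ the distance to the orbit of $x$ is controlled by integrating the contraction on $E^{ss}$, so every point of $N$ is forward-asymptotic to $x$ and hence lies in $W^{ss}_{\mathrm{loc}}(x)$; a dimension count then forces $N \subset W^{ss}_{\mathrm{loc}}(x)$. Thus the dynamically defined leaves are the only integral manifolds, which is precisely unique integrability, and the same argument applied to the flow-saturated plane fields handles the weak foliations. The main obstacle throughout is really the interplay between low regularity and uniqueness: one must set up the contraction in exactly the right norm so that the fixed point is simultaneously smooth enough to be a genuine integral manifold and dynamically pinned down tightly enough to rule out any competing integral manifold.
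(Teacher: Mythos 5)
The paper itself offers no proof of this proposition: it is stated as the classical Stable Manifold Theorem with a citation to Anosov, so there is no in-paper argument to compare against, and your sketch must be judged on its own. It is the standard proof (Hadamard--Perron graph transform for the time-$T$ map with a cone field about $E^{ss}$, $C^1$ regularity via the Hirsch--Pugh--Shub fiber-contraction theorem, globalization by flow saturation, and the length-contraction argument pinning any $C^1$ integral manifold of $E^{ss}$ inside $W^{ss}(x)$), and those steps are correct as outlined. One small omission: to obtain \emph{foliations}, rather than just a family of leaves, you also need the local stable manifolds to vary continuously with the base point; this does come out of the graph transform (continuous dependence of the fixed point on parameters), but it should be stated, since the proposition asserts the existence of foliations.

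The genuine gap is your final claim that ``the same argument applied to the flow-saturated plane fields handles the weak foliations.'' Taken literally this fails. If $N$ is an integral manifold of $E^{ss}\oplus\R\cdot X$ and $\gamma$ is a path in $N$ from $x$ to $y$, write $\gamma'(s)=v^{ss}(s)+c(s)\,X(\gamma(s))$. Under $DX^t$ the $E^{ss}$-component contracts, but the flow component does not: $DX^t\bigl(X(p)\bigr)=X\bigl(X^t p\bigr)$, so the length of $X^t\circ\gamma$ only stays \emph{bounded} (small, if $\gamma$ is short), and points of $N$ are \emph{not} forward-asymptotic to $x$. Hence you cannot place $N$ inside the weak leaf via the asymptotic characterization that worked in the strong case. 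The standard repair is in two steps: first, the bounded-length estimate above shows that if $y\in N$ is close enough to $x$ then $d(X^t x, X^t y)\le\eps$ for all $t\ge 0$; second, one needs the separate fact that for an Anosov flow the local weak stable leaf is exactly the set of points whose entire forward orbit stays within $\eps$ of the forward orbit of $x$. This last fact is where hyperbolicity re-enters: if $y\notin W^{cs}_{\mathrm{loc}}(x)$, then by local product structure $y\in W^{uu}_{\mathrm{loc}}(z)$ for some $z\in W^{cs}_{\mathrm{loc}}(x)$ with $z\neq y$, and exponential expansion along $W^{uu}$ drives $d(X^t x, X^t y)$ past $\eps$ in finite forward time, a contradiction. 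With that supplement the unique integrability of $E^{ss}\oplus\R\cdot X$ (and, symmetrically, of $E^{uu}\oplus\R\cdot X$) goes through, and the rest of your proposal stands.
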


An Anosov flow is called \emph{transversally orientable} if \emph{both} stable and unstable foliations are transversally orientable, or equivalently, if both $E^{ss}$ and $E^{uu}$ are orientable line fields. In particular, since $X$ is a non-singular vector field, it implies that $M$ is orientable.
\begin{convention}
In this text, we will always assume that the manifolds we consider are orientable and the Anosov flow we consider are transversally orientable.
\end{convention}

More generally, we recall the definition of a (topological) pseudo-Anosov flow:
\begin{definition}\label{def_topPA}
A (topological) \emph{pseudo-Anosov} flow on a manifold $M$ is a flow $X^t$ generated by a continuous, non-singular, vector field $X$, 
satisfying the following conditions.  
\begin{enumerate}[label = (\roman*)]
%\item There is a finite collection (possibly empty) of orbits $\left\{\alpha_1, \dots, \alpha_n \right\}$ of  orbits;
\item There are two topologically transverse, $2$-dimensional, singular foliations $\cF^{s}$ and $\cF^{u}$, invariant under $X^t$, with singularities along a finite (possibly empty) collection of orbits $\alpha_1, \dots, \alpha_n$. 
\item \label{item_PAF_prongs}
Each singular orbit $\alpha_i$ has a neighborhood homeomorphic via a foliation-respecting homeomorphism, to that of a model $p_i$-prong, for some $p_i \geq 3$. (See Figure \ref{fig_3_prong} or, e.g., \cite[Definition 5.8]{AgolTsang} for a definition of $p$-prong).
\item  \label{item_converges} 
Given any $x\in M$ and $y \in \cF^{s}(x)$ (resp.~$y \in \cF^{u}(x)$), there exists a continuous increasing reparameterization $h\colon \R \to \R$ such that $d\left(X^t(x), X^{h(t)}(y)\right)$ converges to $0$ as $t\to +\infty$ (resp.~$t\to -\infty$). 
\item \label{item_PAF_backwards_expansivity} 
There exists $\eps>0$ such that for any $x\in M$ and any $y\in \cF^{s}_{\eps}(x)$ (resp.~$y\in \cF^{u}_{\eps}(x)$), with $y$ not on the same orbit as $x$, then for any continuous increasing reparameterization $h\colon \R \to \R$, there exists a $t\leq 0$ (resp.~$t\geq 0$) such that $d\left(X^t(x), X^{h(t)}(y)\right) >\eps$. 
\end{enumerate}
\end{definition}

\begin{figure}[h]
\includegraphics[height=5cm]{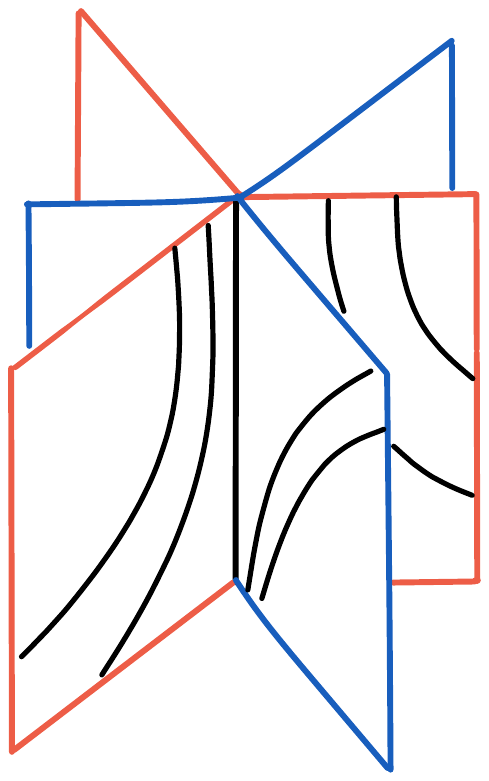}
\caption{The local picture of stable and unstable foliations near a $3$-prong singularity}
\label{fig_3_prong}
\end{figure}

One can also give a \emph{smooth} version of a pseudo-Anosov flow, where one assume that away from the singular orbits, the flow is smooth Anosov, and that the homeomorphisms of item \ref{item_PAF_prongs} can be made Lipshitz, see \cite[Definition 5.9]{AgolTsang}.

In section \ref{sec_pAmodels}, we will encounter \emph{$1$-pronged pseudo-Anosov flow}. Those are pseudo-Anosov flows as in the above definition but for which some of the singular orbits may be $1$-prongs.

Recall that a Reeb vector field $R$ associated to a contact form $\alpha$ is the unique vector field such that $\alpha(R) = 1$ and $i_Xd\alpha =0$. 

\begin{definition}
We say that a contact structure $\xi$ is \emph{Anosov} if $\xi$ admits a Reeb vector field $R$ that is Anosov. In that case, we say that $R$ is Reeb-Anosov.
\end{definition}

\subsection{More background}

We recall here some more facts about pseudo-Anosov flows. We again refer to the lecture notes \cite{Potrie_notes} in this volume for more details and references.

\begin{definition}
We say that two flows $X_1,X_2$ on $M$ are \emph{orbit-equivalent} if there exists a homeomorphism $h\colon M \to M$ sending (unoriented) orbits of $X_1$ to (unoriented) orbits of $X_2$\footnote{The classical definition in dynamical systems is to ask for an orbit equivalence to preserve the direction of the orbits, but from our point of view it is often easier to allow the orbit equivalence to reverse all orientations.}.

If $h$ is homotopic to identity, we say that $X_1,X_2$ are \emph{isotopically equivalent}.
\end{definition}

Given an Anosov flow $X^t$ on $M$, we write $\wt M$ for the universal cover of $M$ and $\wt X^t$ for the lifted flow.
Let $\cO_X=  \wt M / \wt X^t$ denote the \emph{orbit space} of $X$, recall that a fundamental result due to Barbot \cite{Bar_caracterisation} and Fenley \cite{Fen_Anosov_flow_3_manifolds} (and Fenley--Mosher \cite{FenMosher} for the pseudo-Anosov case) is that $\orb_X \simeq \bR^2$.
Moreover, the lifted foliations $\wt \cF^{s,u}$ of $X$ descend to $1$-dimensional foliations of $\cO_X$, so that $(\cO_X,\wt \cF^{s},\wt \cF^{u})$ is a bifoliated plane, and the action of $\pi_1(M)$ on $\wt M$ induces an action on that bifoliated plane. 

Among all the bifoliated planes that arise as orbit spaces of Anosov flows, two are particularly simple: the trivial and skew planes.

\begin{definition}
A bifoliated plane $(P,\cF^+,\cF^-)$ is called:
\begin{itemize}
\item \emph{trivial} if it is isomorphic\footnote{i.e., homeomorphic via a foliation preserving homeomorphism.} to $\bR^2$ equipped with the foliations by vertical and horizontal lines;
\item \emph{skew} if it is isomorphic to the diagonal strip $\{(x,y)\in \bR^2 \mid x<y<x+1\}$ equipped with the foliations by vertical and horizontal lines.
\end{itemize}
\end{definition}

For transversely orientable Anosov flows with skew orbit space, one can further record whether the isomorphism to the diagonal strip $\{(x,y)\in \bR^2 \mid x<y<x+1\}$ preserves the transverse orientations or not.
\begin{definition}
A transversely orientable Anosov flow is called \emph{positively skew} if its orbit space is isomorphic to $\{(x,y)\in \bR^2 \mid x<y<x+1\}$ via an homeomorphism preserving the transverse directions.
It is called \emph{negatively skew} otherwise.
\end{definition}

Just from the definitions, one sees that an isotopic equivalence between pseudo-Anosov flows induces an isomorphism between their orbit spaces that conjugates the respective actions of $\pi_1(M)$. Another essential result of Barbot \cite{Bar_caracterisation} was that conversely, a conjugacy between the induced $\pi_1(M)$-actions on orbit spaces induces an isotopic equivalence between the pseudo-Anosov flows. 

Similarly to the orbit space, one can also define the \emph{(un)stable leaf space} $\Lambda(\cF^s)$ ($\Lambda(\cF^u)$) as the quotient of $\wt M$ by the lifted (un)stable foliations of $X$. In general, the leaf spaces of Anosov flows are $1$-manifolds, and that of pseudo-Anosov flows are \emph{non-Hausdorff trees} (see, e.g., \cite{Fen03}). It turns out that when one of these leaf space happens to be Hausdorff, then it forces the orbit space to be one of the special case we defined earlier. This is the trichotomy result of Barbot and Fenley for pseudo-Anosov flows:
\begin{theorem}[Barbot, Fenley]
Let $X$ be a pseudo-Anosov flow on $M$, then exactly one of the following possibilities happen:
\begin{enumerate}[label=(\roman*)]
\item The orbit space $\cO$ is trivial, which happens if and only if $X$ is the suspension of an Anosov diffeomorphism;
\item The orbit space is skew;
\item $X$ has at least one singular orbit, or both leaf spaces $\Lambda(\wt \cF^s)$ and $\Lambda(\wt \cF^u)$ are not Hausdorff.
\end{enumerate}
\end{theorem}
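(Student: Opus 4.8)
The plan is to reduce the trichotomy to a classification of the open region that $\cO$ cuts out inside the product of its two leaf spaces, and then to isolate the two special shapes. First I would record the two facts that let the orbit space be coordinatized by leaves: distinct leaves of $\wt{\cF}^s$ (resp. $\wt{\cF}^u$) are disjoint, and a stable and an unstable leaf meet in at most one point. Granting $\cO \cong \bR^2$, these give an injective local homeomorphism $\Theta\colon \cO \hookrightarrow \Lambda(\wt{\cF}^s)\times \Lambda(\wt{\cF}^u)$, $x \mapsto (\wt{\cF}^s(x), \wt{\cF}^u(x))$, an open embedding onto a connected, simply connected region $U$ in which the stable and unstable leaves become the horizontal and vertical slices. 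Since the definitions of \emph{trivial} and \emph{skew} require a nonsingular horizontal/vertical foliation of all of $\bR^2$ (resp. of the diagonal strip), any flow with a singular orbit lands in case (iii) by fiat, and so I would immediately reduce to the genuinely Anosov, nonsingular case.

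Next I would argue that, for a nonsingular flow, $\Lambda(\wt{\cF}^s)$ is Hausdorff if and only if $\Lambda(\wt{\cF}^u)$ is, so that ``one leaf space Hausdorff'' already means $\bR$-covered and the complement of (i)--(ii) is exactly the disjunction appearing in (iii). This is the first genuinely nontrivial point: a branching (non-separated) pair of stable leaves should be shown to force a corresponding non-separated pair of unstable leaves, via Fenley's structure theory of branching in the orbit space, in which the non-Hausdorff behaviour is organized into lozenges and product regions whose corners pair up stable and unstable branching. With that symmetry in hand I may assume both leaf spaces are $\cong \bR$, so that $U$ is an open connected subset of $\bR^2$, homeomorphic to $\bR^2$, meeting every horizontal and every vertical line in a nonempty open interval.

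Then comes the core dichotomy. I would show that it is all-or-nothing whether a stable leaf meets every unstable leaf: if every stable leaf meets every unstable leaf then $U = \bR^2$ and $\cO$ is the trivial bifoliated plane; otherwise, once one intersection fails, the cocompact $\pi_1(M)$-action propagates the failure uniformly, and the two boundary components of $U$ become graphs of monotone homeomorphisms of $\bR$. Using that $\pi_1(M)$ acts cocompactly, because $M$ is closed, I would then rigidify these two boundary curves, conjugating them to the lines $y=x$ and $y=x+1$, which identifies $U$ with the diagonal strip, i.e. the skew case. Finally, in the trivial case $U = \bR^2$, I would upgrade the product structure on $\cO$ to a global cross-section: a product orbit space yields a closed transversal hitting every orbit, so $M$ fibers over $S^1$ with an Anosov return map, exhibiting $X$ as a suspension; conversely a suspension visibly has product orbit space.

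The step I expect to be the main obstacle is the rigidity in the non-product case: a priori $U$ could be an essentially arbitrary open region, and turning ``some intersection fails'' into ``$U$ is exactly the diagonal strip'' is where the dynamics, rather than plane topology, must be used, exploiting the cocompactness of the action together with the monotonicity coming from the transversality of the two foliations to pin the boundary curves down to parallel lines. Closely related, and almost as delicate, is the symmetric-Hausdorffness step, which already requires the full force of Fenley's analysis of branching; both are the places where I would lean hardest on the cited work of Barbot and Fenley rather than on soft arguments.
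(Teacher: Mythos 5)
First, a structural point: the paper does not prove this statement at all --- it is quoted as background and attributed to Barbot and Fenley (the relevant sources being the papers cited for the orbit space result, \cite{Bar_caracterisation} and \cite{Fen_Anosov_flow_3_manifolds}), so there is no ``paper's own proof'' to compare against. Judged on its own terms, your proposal does reconstruct the correct architecture of the proofs in the literature: reduce to the nonsingular case, identify the complement of case (iii) with $\bR$-coveredness, prove the product/skew dichotomy for $\bR$-covered flows via the embedding $\cO \hookrightarrow \Lambda(\wt\cF^s)\times\Lambda(\wt\cF^u)$, and characterize the product case as the suspension case. However, as a proof it has genuine gaps beyond the ones you flag, and the steps you defer to ``the cited work of Barbot and Fenley'' are precisely the content of the theorem being proved, so the deferral is circular rather than a reduction.

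Two concrete problems. First, you misjudge where the difficulty sits in case (i): the claim that ``a product orbit space yields a closed transversal hitting every orbit'' is not an upgrade one performs in a line --- it is a major theorem of Barbot. Producing a global cross-section from the product structure on $\cO$ requires either the Schwartzman--Fried cross-section criterion together with control of asymptotic cycles, or Barbot's analysis of the two $\pi_1(M)$-actions on the leaf spaces $\Lambda(\wt\cF^s)\cong\Lambda(\wt\cF^u)\cong\bR$; nothing in the plane topology of $U=\bR^2$ gives it to you. Second, your ``symmetric Hausdorffness first'' step rests on a mechanism that is not correct as stated: lozenges do \emph{not} pair up stable and unstable branching, since skew $\bR$-covered flows (both leaf spaces Hausdorff) have every periodic orbit as a corner of lozenges. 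Fenley's branching theory gives the implication branching $\Rightarrow$ lozenges, not a transfer of branching from one foliation to the other. The clean (and standard) argument runs in the opposite logical order: one first proves the dichotomy ``$\bR$-covered $\Rightarrow$ product or skew'' assuming only that the \emph{stable} leaf space is Hausdorff, and then deduces the symmetry by applying that dichotomy to the time-reversed flow $-X$, which exchanges $\wt\cF^s$ and $\wt\cF^u$: if $\Lambda(\wt\cF^u)$ were Hausdorff but $\Lambda(\wt\cF^s)$ were not, the dichotomy for $-X$ would force both to be $\bR$, a contradiction. Reordering your outline this way removes the dependence on the branching paper entirely and closes that gap; the product-implies-suspension step, however, has to be carried out (or honestly cited as Barbot's theorem, at which point the statement is no longer being proved but quoted, exactly as the paper does).
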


In the first two cases of the trichotomy above, the leaf spaces $\Lambda(\wt \cF^s)$ and $\Lambda(\wt \cF^u)$ are both homeomorphic to $\bR$, and we call the flow $\bR$-covered. 

Barbot \cite{Bar01} proved that Reeb-Anosov flows are skew (see also \cite{Mar_easy} for a shorter argument). In \cite{Mar24}, Marty proved the following remarkable result:
\begin{theorem}[Marty \cite{Mar24}]\label{thm_martyequivalence}
An Anosov flow is positively (resp.~negatively) skew if and only if it is isotopically equivalent to a Reeb-Anosov flow of a positive (resp.~negative) contact structure.
\end{theorem}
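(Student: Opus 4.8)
The plan is to treat the two implications separately. Throughout I fix an orientation of $M$ together with orientations of the line fields $E^{ss}$ and $E^{uu}$ (equivalently, transverse orientations of $\fs$ and $\fu$), which is exactly what lets us speak of a \emph{positively} skew flow, and I measure the sign of a contact structure by the sign of $\alpha\wedge d\alpha$ against the orientation of $M$. The real content, I expect, is the constructive ``if'' direction; the ``only if'' direction is Barbot's theorem together with a sign computation.

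For the implication ``Reeb-Anosov of a positive contact structure $\Rightarrow$ positively skew'', let $\alpha$ be the contact form, so $\alpha(X)=1$ and $i_X d\alpha=0$, where $X$ is the Anosov Reeb field. Since the Reeb flow preserves $\alpha$, the $2$-plane field $\ker\alpha$ is transverse to $X$ and $DX^t$-invariant, so by uniqueness of the Anosov splitting $\ker\alpha=E^{ss}\oplus E^{uu}$. Barbot's theorem already gives that the flow is skew, so only the sign is at stake. The contact condition says that $d\alpha$ restricts to an area form on $E^{ss}\oplus E^{uu}$, and for $u\in E^{ss}$, $v\in E^{uu}$ the sign of $d\alpha(u,v)$ compares the co-orientation of $\ker\alpha$ coming from $d\alpha$ with the one coming from the chosen orientations of $E^{ss}$ and $E^{uu}$. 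I would check that, after passing to the orbit space, where the $E^{ss}$- and $E^{uu}$-directions become the transverse directions of $\fu$ and $\fs$, the condition $\alpha\wedge d\alpha>0$ translates precisely into the isomorphism $\cO_X\cong\{x<y<x+1\}$ preserving transverse orientations, i.e.\ into positive skewness.

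For the converse the formal target is a contact form $\alpha$ with $\alpha(X)=1$ and $\ker\alpha=E^{ss}\oplus E^{uu}$. Any such $\alpha$ would be automatically flow-invariant, because its kernel is $DX^t$-invariant and $\alpha(X)\equiv1$; hence $L_X\alpha=i_X d\alpha=0$ and $X$ would be its Reeb field. On the kernel one computes $d\alpha(u,v)=-\alpha([u,v])$, so the contact condition is exactly non-integrability of $E^{ss}\oplus E^{uu}$ with a prescribed sign. Thus, modulo regularity, the whole problem reduces to realizing this picture by a genuine smooth form whose Reeb flow is Anosov, in the correct isotopy class and with the correct sign, and the positively-skew hypothesis should be what guarantees the required sign is consistent and globally realizable.

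The main obstacle is precisely that regularity: the distribution $E^{ss}\oplus E^{uu}$ is in general only H\"older, so the naive invariant $\alpha$ is not smooth and $d\alpha$ is meaningless. I would therefore build a smooth $\alpha$ adapted to the skew data rather than to $X$ itself, for instance by choosing a $C^\infty$ contact form whose kernel is $C^0$-close to $E^{ss}\oplus E^{uu}$ and whose sign is the one dictated by positive skewness (a Mitsumatsu-type bicontact pair $(\xi_+,\xi_-)$ supporting the projectively Anosov flow $X$ is the natural starting point). Since projective hyperbolicity is a dominated-splitting condition, hence open, the Reeb field $R$ of this smooth $\alpha$ remains projectively Anosov; and because $R$ preserves the volume $\alpha\wedge d\alpha$, the principle that a volume-preserving projectively Anosov flow in dimension three is Anosov would upgrade $R$ to a genuine Anosov flow. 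Finally I would identify its isotopy class with that of $X$ by matching their induced $\pi_1(M)$-actions on orbit spaces and invoking Barbot's converse, while the sign computation of the second paragraph forces the ambient contact structure to be positive. The two most delicate points are arranging smoothness while keeping the Reeb field Anosov (the non-smoothness of $E^{ss}\oplus E^{uu}$ blocks a naive $C^1$-perturbation/structural-stability argument), and pinning down the sign dictionary so that ``positive contact'' matches ``positively skew'' rather than getting interchanged.
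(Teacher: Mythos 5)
The paper does not actually prove this statement: it is quoted as Marty's theorem from \cite{Mar24}, so there is no internal proof to compare against, and your proposal has to stand on its own. Your ``if'' direction is fine in outline: $\ker\alpha$ is a $DX^t$-invariant plane field transverse to the flow, hence equals $E^{ss}\oplus E^{uu}$; Barbot gives skewness; the rest is genuine but routine sign bookkeeping, plus the observation that positive skewness is preserved under isotopic equivalence. The upgrading principle you invoke later is also correct and standard: for a nonsingular $3$-dimensional flow, domination bounds the \emph{ratio} of the two normal growth rates while preservation of the volume $\alpha\wedge d\alpha$ bounds their \emph{product}, so both are exponentially controlled and a projectively Anosov Reeb flow is Anosov.

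The genuine gap is in the hard direction, which is exactly where the content of Marty's theorem lies. Your plan is to take a smooth contact form $\alpha$ whose kernel is $C^0$-close to $E^{ss}\oplus E^{uu}$ and to conclude ``by openness of dominated splittings'' that its Reeb field $R$ is projectively Anosov. This is a non sequitur: openness of projective Anosovity concerns vector fields that are $C^1$-close to $X$, whereas the Reeb field of such an $\alpha$ is not controlled by $X$ in any topology --- $R$ is determined by $\ker d\alpha$, i.e.\ by first derivatives of $\alpha$, over which $C^0$-closeness of the plane field $\ker\alpha$ gives no control whatsoever. (Moreover, the Mitsumatsu bicontact planes $\xi^\pm$ \emph{contain} $X$, so they are nowhere transverse to the flow and are not a natural seed for a plane field close to $E^{ss}\oplus E^{uu}$.) Finally, even if this construction produced some Anosov Reeb flow of a positive contact structure, nothing in it ties the induced $\pi_1(M)$-action on the orbit space of $R$ to that of $X$: with structural stability unavailable (as you yourself note), ``matching the actions and invoking Barbot's converse'' is an assertion, not an argument, so the isotopic equivalence --- the whole point of the statement --- is not obtained. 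Marty's proof has to construct the contact form and the isotopic equivalence \emph{together} out of the skewed structure (via Birkhoff sections and an explicit adapted model), and that constructive idea is what is missing from your proposal.
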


We finally recall two more notions that will appear later:
A taut foliation $\cF$ of $M$ is called \emph{$\bR$-covered} if its leaf space $\Lambda(\cF)$ is homeomorphic to $\bR$. The leaf space $\Lambda(\cF)$ is defined as above as the quotient of $\wt M$ by the lifted foliation $\wt \cF$.

Given an $\bR$-covered foliation, one can build transversals in $\wt M$ that intersects \emph{every} leaf of the lifted foliations $\wt \cF$. This is particularly interesting if these transversals can be chosen to be the lifted orbits of a flow on $M$. Thus we define:
\begin{definition}\label{def_regulating}
A vector field $X$ transverse to a foliation $\cF$ is called \emph{regulating} for $\cF$ if in the universal cover every orbit of $\wt X$ intersects every leaf of $\wt \cF$.
\end{definition}
Notice that if a foliation admits a regulating flow then it it automatically $\bR$-covered\footnote{In fact it is even more: it is \emph{uniform}, see \cite{Calegari_book} for more on the distinctions between uniform and $\bR$-covered foliations.}.

\section{The free homotopy data and orbit equivalences}\label{sec_free_homotopy_data}

In this section, we recall the main steps of the proof of Theorem \ref{thm_BMB}, focusing on the implication that the isotopy class of an Anosov contact structure uniquely determines the Reeb-Anosov flows, up to isotopy equivalence.

%We strat by recalling a definition:
%\begin{definition}
%Let $X$ be a vector field on $M$, we define its free homotoy data as the set $\cP(X)$ as the free homotopy classes represented by unoriented periodic orbits of $X$, i.e., denoting by $[\pi_1(M)]$ the set of conjugacy classes in the fundamental group of $M$, we set
%\[
%\cP(X) = \{ [g] \in [\pi_1(M)] \mid [g] \text{ or } [g^{-1}] \text{ is represented by a periodic orbit of } X.
%\]
%\end{definition}

The proof of Theorem \ref{thm_BMB} has two main steps:
\begin{enumerate}
\item Show that the \emph{free homotopy data} (Definition \ref{def_free_hom_data}) of a Reeb-Anosov flow uniquely determines the flow;
\item Use contact magic\footnote{Maybe this does not feel like magic to any self-respecting contact geometer, but I am not one of them.}: The cylindrical contact homology allows one to show that the free homotopy data of \emph{any} Reeb-Anosov flow of a fixed contact structure are the same.
\end{enumerate}

The first step is given by the following theorem:
\begin{theorem}[\cite{BMB,BFM}]\label{thm_free_homotopy_data}
Let $X_1$, $X_2$ be Anosov flows on $M$. Assume that $X_1$ is $\bR$-covered, or that there are no tori transverse to $X_1$.
Then $\cP(X_1)=\cP(X_2)$ if and only if $X_1$ and $X_2$ are isotopically equivalent.
\end{theorem}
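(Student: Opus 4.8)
The ``only if'' direction is immediate: an isotopic equivalence is realized by a homeomorphism $h\colon M\to M$ homotopic to the identity carrying orbits to orbits, hence periodic orbits to periodic orbits; since $h\simeq\id$ it preserves conjugacy classes in $\pi_1(M)$, so $\cP(X_1)=\cP(X_2)$. The whole content lies in the converse, and my first move is to reformulate it on orbit spaces. By the converse half of Barbot's theorem quoted above \cite{Bar_caracterisation}, it suffices to produce a homeomorphism $\Phi\colon\cO_{X_1}\to\cO_{X_2}$ between the two bifoliated planes that is equivariant for the two $\pi_1(M)$-actions and carries $\wt\cF^s$ to $\wt\cF^s$ and $\wt\cF^u$ to $\wt\cF^u$. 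Thus the problem becomes: reconstruct the bifoliated plane together with its $\pi_1(M)$-action from the set $\cP(X_i)$ alone.

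\textbf{From free homotopy classes to the lozenge skeleton.} The bridge is the dictionary between periodic orbits and fixed points on the orbit space: a periodic orbit in the free homotopy class $[g]$ lifts to a point of $\cO_{X_i}$ fixed by $g$, at which $g$ acts with a hyperbolic, corner-of-a-lozenge local picture, and conversely every such fixed point descends to a periodic orbit. I would then invoke Fenley's structure theory, whose central output is that two periodic orbits are freely homotopic exactly when their lifts are joined by a chain of lozenges in $(\cO_{X_i},\wt\cF^s,\wt\cF^u)$. Consequently the combinatorics recorded by $\cP(X_i)$ --- which elements of $\pi_1(M)$ are periodic, how the classes of powers $[g^n]$ fit together, and exactly which periodic classes coincide --- encodes the entire lattice of lozenges and their corners. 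Since $\cP(X_1)=\cP(X_2)$, this lozenge skeleton, viewed as a $\pi_1(M)$-set, is \emph{the same} for both flows, and it provides the equivariant scaffolding on which $\Phi$ will be built.

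\textbf{Reconstruction and the role of the two hypotheses.} It remains to upgrade the identification of skeletons to a genuine foliation-preserving homeomorphism of planes, and this is where the dichotomy in the hypothesis enters. In the $\bR$-covered case the orbit space is either trivial --- then $X_i$ is a suspension, pinned down directly by its monodromy up to conjugacy --- or skew, and one exploits the extra rigidity of skew $\bR$-covered flows (the canonical $\bZ$-symmetry relating the two foliations and the induced action on the ideal circle) to promote the match of corners to a global conjugacy. In the no-transverse-tori case one instead leverages the density properties of periodic orbits to define $\Phi$ on an equivariant dense subset of each plane via the skeleton identification, extending by continuity and checking leaf by leaf that stable maps to stable and unstable to unstable.

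\textbf{Main obstacle.} The delicate point, and the very reason for the hypothesis, is precisely this last upgrade. A priori two Anosov flows can be related by a Fried--Goodman surgery along a transverse torus, and such surgeries can preserve large swaths of the periodic-orbit combinatorics while destroying orbit equivalence; the lozenge skeleton alone then fails to determine the plane. The assumption that $X_1$ is $\bR$-covered or admits no transverse torus is exactly what forbids this flexibility, forcing the non-periodic leaves and the global topology of $\cO_{X_1}$ to be determined by the periodic skeleton. Verifying rigorously that these hypotheses remove all such ambiguity --- and that the resulting $\Phi$ is continuous, bijective and foliation-respecting, rather than merely well defined on a dense equivariant set --- is the crux of the argument.
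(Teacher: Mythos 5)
You have correctly identified the road map of the underlying papers, but your text is an outline of that road map rather than a proof, and it also misses the two reductions that constitute the paper's actual argument. The paper does not reprove the result: in the $\bR$-covered case it quotes \cite[Theorem 1.2]{BMB} verbatim, and in the no-transverse-tori case it observes that a transverse torus would be incompressible (Lemma \ref{lem_transversetorus}), so that an Anosov flow with no transverse tori is transitive by Brunella \cite{Brunella}, at which point \cite[Proposition 2.1]{BFM} applies. Your proposal instead gestures at the internal proofs of those cited results (equivariant reconstruction of the bifoliated orbit space from $\cP$, via Barbot's converse theorem and Fenley's lozenge theory), which is indeed their strategy.

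Judged as a proof, the central step is missing. The assertion that $\cP(X_i)$ ``encodes the entire lattice of lozenges and their corners'' is precisely the hard theorem, not a consequence of Fenley's structure theory: Fenley tells you that free homotopy of periodic orbits is implemented by chains of lozenges, but recovering from the bare set of conjugacy classes an equivariant identification of the two lozenge skeletons, and then extending it to non-periodic orbits and to a foliation-preserving homeomorphism of planes, is the content of \cite{BFM} (and of \cite{BMB} in the skew case); you acknowledge this yourself in your final paragraph. Two further concrete gaps: first, in the no-transverse-tori case your extension argument relies on density of periodic orbits, which requires transitivity; transitivity is not a hypothesis and must be deduced from the absence of transverse tori --- this is exactly the Brunella-plus-Lemma \ref{lem_transversetorus} step that the paper supplies and that is nowhere in your proposal. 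Second, in the $\bR$-covered case you implicitly treat both flows as $\bR$-covered, but the hypothesis bears only on $X_1$; showing that $\cP(X_1)=\cP(X_2)$ forces $X_2$ into the same class is itself a nontrivial part of \cite[Theorem 1.2]{BMB}. The ``only if'' direction and your diagnosis of why the hypotheses are needed are fine.
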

The ``no transverse tori'' assumption, is satisfied by a wide range of pseudo-Anosov flows, for instance whenever $M$ is hyperbolic (see Lemma \ref{lem_transversetorus} below). In particular skew flows do not admit transverse tori was proved by Barbot \cite[Théorème C]{Bar_caracterisation}. 

The result stated above is actually a special case of the main result of \cite{BFM} which gives a complete invariant of isotopic equivalence for transitive pseudo-Anosov flows. Since that special case is not stated as such in \cite{BMB,BFM}, we start by showing that the statement we give above do follow from \cite{BFM}. To do that, we need the following lemma 
\begin{lemma}\label{lem_transversetorus}
Let $T$ be a torus transverse to a pseudo-Anosov flow $X$. Then $T$ is incompressible in $M$. %\footnote{Incompressible means $\pi_1$-injective.}.
\end{lemma}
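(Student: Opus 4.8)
The plan is to argue by contradiction, combining the loop theorem with a Poincar\'e--Hopf count on a compressing disk, where the hyperbolicity of $X$ forces all of the relevant tangencies to have the ``wrong'' sign.

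First I would record that $T$ is two-sided: the flow $X$, being transverse to $T$, coorients it. Hence if $T$ were compressible, the Loop Theorem (Papakyriakopoulos) would provide an embedded compressing disk $D\subset M$ with $\partial D=\gamma$ an essential simple closed curve on $T$ and $\mathrm{int}\,D\cap T=\emptyset$. The key structural observation is that the weak stable foliation $\cF^{s}$ is transverse to $T$: indeed $X$ is tangent to $\cF^{s}$ and $X\pitchfork T$, so $T_pT$ cannot contain the tangent plane to $\cF^{s}$ at any $p\in T$. Thus $\cF^{s}$ meets $T$ in a one-dimensional (a priori singular) foliation $\mathcal{G}:=\cF^{s}\cap T$, and likewise for $\cF^{u}$.

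Next I would run the index argument on $D$. After a general-position isotopy (rel a collar of $\partial D$ in which I arrange $\cF^{s}$ to be transverse to $D$), the characteristic foliation $\mathcal{D}:=TD\cap T\cF^{s}$ is a singular line field on $D$ whose singularities are exactly the points where $D$ is tangent to a leaf of $\cF^{s}$. Arranging $\mathcal{D}$ to point transversally outward along $\partial D$, Poincar\'e--Hopf gives $\sum_p \mathrm{ind}_p(\mathcal{D})=\chi(D)=1$. The hyperbolic input is that every such tangency has non-positive index: because the flow line $\cF^{s}\cap\cF^{u}$ sits at a transverse saddle of the splitting $E^{ss}\oplus E^{uu}$, a surface tangent to $\cF^{s}$ at an isolated point is pinched by the transverse unstable direction into a saddle (hyperbolic) tangency, and where $D$ meets a $p$-prong singular orbit the local index is only more negative. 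Summing, $1=\sum_p\mathrm{ind}_p(\mathcal{D})\le 0$, a contradiction; hence no such $D$ exists and $T$ is incompressible.

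The main obstacle is the sign computation at the tangencies, i.e.\ making precise that hyperbolicity forces $\mathrm{ind}_p(\mathcal{D})\le 0$, since this is where all the dynamics enters: I would model it in local Anosov coordinates $(t,s,u)$ and compare the tangency of $D$ with the leaf of $\cF^{s}$ against the transverse foliation $\cF^{u}$, exactly as in the classical theory of tangencies of surfaces with (pseudo-)Anosov flows underlying Birkhoff-section constructions. Two secondary technical points I would also handle carefully are the general position of $D$ with respect to the finitely many singular orbits (a dimension count makes these intersections isolated, and the prong structure keeps the indices $\le 0$) and the boundary behaviour of $\mathcal{D}$ along $\gamma$, so that the count indeed returns $\chi(D)=1$. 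I would note in passing that the degenerate case in which $T$ is null-homotopic can be dispatched even more quickly: a lift of $T$ would then immerse, via the projection to the orbit space, into $\orb\cong\bR^2$, which is impossible by compactness.
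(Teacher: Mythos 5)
Your approach is genuinely different from the paper's (which works in the orbit space), but it has a fatal gap at precisely the step you identify as the ``hyperbolic input.'' The claim that every tangency of the compressing disk $D$ with a leaf of $\cF^{s}$ must be a saddle is false. Locally, $\cF^{s}$ is just a product foliation by $2$-planes; an arbitrary disk can be tangent to a leaf like $z=x^2+y^2$ (a center, index $+1$) just as easily as like $z=x^2-y^2$ (a saddle). Hyperbolicity of the flow constrains the dynamics \emph{inside} the leaves, but it says nothing about how an arbitrary surface --- and $D$ is arbitrary: the Loop Theorem gives no transversality to $X$ --- sits relative to the foliation. The Birkhoff-section analogy you invoke does not apply: in that theory the surface is transverse to the \emph{flow}, which (as you correctly note for $T$) forces transversality to $\cF^{s}$, so there are \emph{no} tangencies at all; the negative-index singularities there come from intersections with $p$-prong orbits, not from saddle tangencies. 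For a disk not transverse to $X$, Poincar\'e--Hopf with $\chi(D)=1$ is perfectly consistent with, say, one center and no saddles, so no contradiction arises. Eliminating centers is exactly the hard content of Novikov-type theorems (vanishing cycles) or Roussarie--Thurston general position, which require a genuine Reebless foliation and in any case do not directly apply to the \emph{singular} foliations of a pseudo-Anosov flow.

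Interestingly, your closing parenthetical remark is the germ of the argument that actually works, and is how the paper proceeds. Since orbits of $\wt X$ meet a lift $\wt T$ at most once, the projection of $\wt T$ to the orbit space $\cO_X\cong\bR^2$ is a local homeomorphism; this rules out $\wt T$ being a torus (your compactness observation, or recurrence of lifted orbits). If $T$ is compressible but not null-homotopic, $\wt T$ is an annulus, invariant under some nontrivial $g\in\pi_1(M)$; projecting a fundamental domain for $g$ to $\cO_X$ gives an annulus $A$ with inner boundary $c$ and outer boundary $gc$, and one then contradicts the structure of the action of $g$ on $\cO_X$: $g$ would need a fixed point in the disc bounded by $c$, but all fixed points of $g$ in the orbit space are hyperbolic (topologically saddle-like), forcing $gc\cap c\neq\emptyset$. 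So the hyperbolicity enters through the fixed-point structure of the $\pi_1$-action on the orbit space, not through an index count of tangencies; if you want to salvage your write-up, this is the route to take.
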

This lemma was proved by Fenley \cite[Corollary 2.2]{Fenley_QGAF} for Anosov flows, see also \cite[Lemma 1]{Brunella} and \cite[Proof of Proposition 2.7]{Mosher_homologynormI} for a special version of this lemma. The proof below was told to me by Kathryn Mann.
\begin{proof}[Proof sketch]
%First notice that, thanks to Novikov's theorem (and Gabai--Oertel \cite{GB89} for the pseudo-Anosov case), any closed curve that is transverse to the (un)stable foliation of $X$ is .
Suppose $T$ is compressible. Let $\wt T$ be a lift of $T$ to $\wt M$. Then $\wt T$ must have non-trivial $\pi_1$, and, since $T$ is transverse to $X$, any orbit of $\wt X$ intersects $\wt T$ at most once. In particular, $\wt T$ cannot be a torus (otherwise there would be recurrent orbits of $\wt X$, a contradiction), so must be an annulus. In particular, there exists $g\in \pi_1(M)$, non-trivial, that leaves $\wt T$ invariant.
Consider a fundamental domain of $\wt T$ for the action of $g$, and call $A$ its projection to the orbit space $\cO_X$. Since orbits of $\wt X$ intersects $\wt T$ at most once, $A$ must be an annulus, and thus have an ``inner'' boundary $c$ (i.e., $c$ is the boundary of $A$ that bounds a topological disc disjoint from $A$). Since $A$ is a fundamental domain for $g$, $gc$ must be the outer boundary of $A$, but this is impossible for the action of $g$ on $\cO$: $g$ would need to have at least one fixed point in the disc bounded by $c$, and since all fixed points of $g$ are hyperbolic, one must have $gc\cap c \neq \emptyset$.
\end{proof}

We may now justify the statement of Theorem \ref{thm_free_homotopy_data}:
\begin{proof}[Proof of Theorem \ref{thm_free_homotopy_data}]
If $X_1$ is $\bR$-covered, then the statement is Theorem 1.2 in \cite{BMB}.
If $X_1$ does not admit any transverse torus, it must be transitive (see Brunella \cite{Brunella} for the Anosov case and \cite{BBM24b} for the general case). Hence, the statement of Theorem \ref{thm_free_homotopy_data} follows from \cite[Proposition 2.1]{BFM}, thanks to Lemma \ref{lem_transversetorus}.
\end{proof}

Now we can describe the contact magic.

First recall that a periodic orbit of a Reeb flow is called \emph{nondegenerate} if the linearized Poincaré map on that orbit has no roots of unity as eigenvalues. In dimension $3$, nondegenerate periodic orbits are either elliptic (if the eigenvalues are on the unit circle, but not roots of unity) or hyperbolic.
 A Reeb flow $R$ and its defining contact $1$-form are called nondegenerate if all the periodic orbits of $R$ are nondegenerate.
Notice that if $R$ is Reeb-Anosov, then every periodic orbit is hyperbolic, and thus $R$ is nondegenerate.

Cylindrical contact homology was introduced, in a more general context by Eliashberg--Givental--Hofer in \cite{EGH}. It was proved to be well-defined by Hutchings and Nelson \cite{HN} for \emph{dynamically convex} contact forms. Dynamically convex contact $1$-forms include all nondegenerate $1$-forms such that their Reeb flow have no contractible periodic orbits. In particular, it contains the class of contact $1$-forms of Reeb-Anosov flows.

 Some of the fundamental results in this theory can be summarized as follows:
\begin{theorem} 
 If $\alpha_1$ and $\alpha_2$ are nondegenerate dynamically convex contact forms on a closed, hypertight\footnote{A contact structure is called hypertight if it admits a Reeb flow with no contractible orbit.}, contact 3-manifold $(M ,\xi)$, then $C\bH^{\Lambda}_{\text{cyl}}(\alpha_1) \cong C\bH^{\Lambda}_{\text{cyl}}(\alpha_2)$ for any set $\Lambda$ of free homotopy classes in $M$.  The space $C\bH^{\Lambda}_{\text{cyl}}(\alpha_i)$ is a $\mathbb{Q}$-vector space, it is the homology of a complex generated by the periodic orbits of the Reeb flow of $\alpha_i$ in the free homotopy classes belonging to $\Lambda$. 
 \end{theorem}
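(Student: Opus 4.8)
The plan is to run the standard Floer-theoretic invariance argument for this flavour of contact homology, in the form made rigorous by Hutchings--Nelson following Eliashberg--Givental--Hofer.

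First I would fix the chain-level model. For a nondegenerate $\alpha_i$, let $CC^{\Lambda}_*(\alpha_i)$ be the $\mathbb{Q}$-vector space generated by the \emph{good} periodic Reeb orbits of $R_{\alpha_i}$ whose free homotopy class lies in $\Lambda$, graded by the Conley--Zehnder index $\mu_{\mathrm{CZ}}$. On the symplectization $(\bR\times M, d(e^{s}\alpha_i))$ choose an almost complex structure $J$ adapted to $\alpha_i$, i.e.\ $J\partial_s = R_{\alpha_i}$ and $J\xi=\xi$ compatibly with $d\alpha_i$. The differential sends an orbit $\gamma_+$ to the signed, multiplicity-weighted count of $J$-holomorphic cylinders of Fredholm index $1$ (hence rigid modulo the $\bR$-action) asymptotic to $\gamma_+$ at $+\infty$ and to a second orbit $\gamma_-$ at $-\infty$. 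The point that makes the free-homotopy bookkeeping legitimate is elementary: such a cylinder $u\colon \bR\times S^1 \to \bR\times M$ is literally a homotopy in $M$ between the loops $\gamma_+$ and $\gamma_-$, so the differential preserves free homotopy classes, the total complex splits as a direct sum indexed by $[\pi_1(M)]$, and one may restrict to any chosen $\Lambda$ without affecting anything.

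Next I would establish $\partial^2=0$ and, later, the chain-map and chain-homotopy properties, all of which reduce to the same analytic input: a description of the boundary of the relevant compactified $1$-dimensional moduli spaces. By SFT compactness, a sequence of index-$2$ cylinders can only degenerate to a broken holomorphic building; the ``expected'' boundary stratum is a pair of index-$1$ cylinders, whose count is $\langle\partial^2\gamma_+,\gamma_-\rangle$, so $\partial^2=0$ will follow once I rule out every other limit. The dangerous limits are a holomorphic plane bubbling off, necessarily asymptotic to a contractible orbit, and limits involving multiply covered components on which transversality can fail. This is exactly where the hypotheses enter: dynamical convexity forces any contractible orbit to have $\mu_{\mathrm{CZ}}\ge 3$, so a somewhere-injective plane has strictly positive index and cannot appear in these low-dimensional moduli spaces, while hypertightness supplies a reference form with no contractible orbit at all, pinning down the theory; together with nondegeneracy and automatic transversality in dimension $3$ (or a generic $J$ restricted to the simple curves) this cuts out the moduli spaces transversally and leaves only the good two-level breakings.

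For the invariance statement I would compare the two complexes by a cobordism map. Writing $\alpha_2=e^{g}\alpha_1$ (possible since both coorient $\xi$), interpolate between the symplectization of $\alpha_1$ near $+\infty$ and that of $\alpha_2$ near $-\infty$ to obtain an exact symplectic cobordism, equip it with a compatible $J$ that is cylindrical near each end, and let $\Phi\colon CC^{\Lambda}_*(\alpha_1)\to CC^{\Lambda}_*(\alpha_2)$ count its rigid (index $0$) cylinders. The same breaking analysis shows $\Phi$ is a chain map. Running the construction in reverse gives $\Psi$, and a one-parameter family of cobordisms joining the glued cobordism to the trivial one produces, by counting the isolated Fredholm index $-1$ cylinders occurring in this family, a chain homotopy witnessing $\Psi\circ\Phi\simeq\id$; symmetrically $\Phi\circ\Psi\simeq\id$. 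Passing to homology yields $C\bH^{\Lambda}_{\text{cyl}}(\alpha_1)\cong C\bH^{\Lambda}_{\text{cyl}}(\alpha_2)$, and since every cobordism cylinder is again a homotopy between its asymptotics, the isomorphism respects the splitting by $\Lambda$. The hard part is entirely analytic and is precisely the content of the cited theorem: in full generality cylindrical contact homology is \emph{not} well-defined, the obstructions being multiply covered curves and plane-bubbling, and the real work is to show that nondegeneracy, dynamical convexity and hypertightness exclude these pathologies in all the $0$- and $1$-dimensional moduli spaces in play. I expect that step, rather than the formal homological algebra, to carry all the difficulty.
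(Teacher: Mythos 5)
This theorem is not proved in the paper at all: it is quoted as a summary of known deep results, with well-definedness credited to Hutchings--Nelson \cite{HN} (building on Eliashberg--Givental--Hofer \cite{EGH}), and the surrounding text explicitly treats it as a black box (``contact magic''). So there is no in-paper argument to compare yours against; the relevant comparison is with the cited literature, whose strategy your sketch does reproduce correctly at the level of architecture: the splitting of the chain complex over free homotopy classes because a holomorphic cylinder is literally a homotopy between its asymptotics, the role of dynamical convexity in excluding plane-bubbling, and the cobordism-map/chain-homotopy scheme for invariance.

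As a proof, however, your proposal has a genuine gap, and you name it but do not close it. The entire difficulty of the Hutchings--Nelson theorem --- the reason cylindrical contact homology remained conjectural for well over a decade after \cite{EGH} --- is that transversality cannot be achieved for multiply covered curves by perturbing $J$, so negative-index multiple covers (of planes and of cylinders) can a priori appear in the compactified $1$-dimensional moduli spaces. This bites hardest exactly where you wave it away: in the statement that ``the same breaking analysis shows $\Phi$ is a chain map.'' In an exact cobordism the $\bR$-action is lost, index-$0$ and negative-index multiple covers cannot be excluded by naive genericity, and this is why Hutchings--Nelson need automatic transversality in dimension $3$, obstruction bundle gluing, and (for the invariance statement itself) domain-dependent almost complex structures and a nonequivariant/family version of the theory; none of this follows from SFT compactness plus a generic $J$ on simple curves. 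You also never address gluing, which is needed to show that the broken configurations actually arise as boundary points with the correct signed counts, rather than merely being the only possible limits. Since you explicitly defer all of this to ``the content of the cited theorem,'' what you have written is an accurate description of the shape of the proof rather than a proof --- which, to be fair, is also precisely how the paper itself uses the statement, namely as a citation.
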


The consequence of this result that is particularly relevant is that it \emph{almost} gives that the free homotopy data of any nondegenerate Reeb flow of a given hypertight contact structure $\xi$ are the same. Indeed, the above implies:
\begin{corollary}
Let $R_1,R_2$ be two nondegenerate Reeb flows of dynamically convex contact $1$-forms $\alpha_1,\alpha_2$ defining the same hypertight contact structure. Suppose $[g]\in \cP(R_1)$ is such that $C\bH^{[g]}_{\text{cyl}}(\alpha_1)\neq \{0\}$, then $[g]\in \cP(R_2)$.
\end{corollary}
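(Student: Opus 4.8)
The plan is to apply the isomorphism theorem stated above to a single free homotopy class, and then to exploit that $C\bH^{[g]}_{\text{cyl}}(\alpha_i)$ is by construction the homology of a chain complex whose generators are precisely the periodic Reeb orbits in the class $[g]$. First I would take $\Lambda = \{[g]\}$. Since $\alpha_1$ and $\alpha_2$ are both nondegenerate, dynamically convex, and define the same hypertight contact structure $\xi$, the hypotheses of the theorem are satisfied, and it provides an isomorphism of $\mathbb{Q}$-vector spaces
\[
C\bH^{[g]}_{\text{cyl}}(\alpha_1) \cong C\bH^{[g]}_{\text{cyl}}(\alpha_2).
\]

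Combined with the hypothesis $C\bH^{[g]}_{\text{cyl}}(\alpha_1) \neq \{0\}$, this immediately forces $C\bH^{[g]}_{\text{cyl}}(\alpha_2) \neq \{0\}$. To finish, I would invoke the description of this group as the homology of a complex generated by the periodic orbits of $R_2$ in the class $[g]$. The decisive observation is that a chain complex with no generators is the zero complex and hence has vanishing homology: so if $R_2$ admitted no periodic orbit in the class $[g]$, then $C\bH^{[g]}_{\text{cyl}}(\alpha_2)$ would be zero, contradicting what we just established. Therefore $R_2$ has at least one periodic orbit in the class $[g]$, and in particular $[g] \in \cP(R_2)$.

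I do not expect a genuine obstacle here, as the whole argument is a formal consequence of the isomorphism theorem together with the triviality of homology for a complex without generators. The only point worth flagging is the bookkeeping of orientations: the cylindrical contact homology is graded by \emph{oriented} free homotopy classes, so the argument in fact yields a periodic orbit of $R_2$ in the oriented class $[g]$, which is a priori slightly stronger than the unoriented membership $[g]\in\cP(R_2)$ recorded in the statement. (Note likewise that the stated hypothesis $[g]\in\cP(R_1)$ is subsumed by $C\bH^{[g]}_{\text{cyl}}(\alpha_1)\neq\{0\}$, which already guarantees a generator, hence a periodic $R_1$-orbit in the class $[g]$.)
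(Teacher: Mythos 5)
Your proof is correct and is exactly the argument the paper intends (the paper states the corollary as an immediate consequence of the isomorphism theorem without writing out details): restrict to $\Lambda=\{[g]\}$, transport nonvanishing of homology through the isomorphism, and note that a complex with no generators has zero homology, so $R_2$ must have a periodic orbit in $[g]$. Your side remarks on orientations and on the redundancy of the hypothesis $[g]\in\cP(R_1)$ are accurate but not needed.
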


In order to have a precise control of the free homotopy class, one then need to show that the associated cylindrical homology does not vanish. 

%I would hope that it could be a fairly general (by that I mean generic among Reeb flows) fact amongst dynamically convex contact structures that given any $[g]\in \cP(R_1)$ one has $C\bH^{[g]}_{\text{cyl}}(\alpha_1)\neq \{0\}$. How general this can be is unknown to me. A pessimistic friend of mine pointed out that work of Colin--Honda--Laudenbach \cite{CHL} imply that \emph{any} area preserving diffeomorphism, in certain fixed isotopy classes, on a surface with one boundary can be realized as the return map to a Birkhoff section (see Definition \ref{def_Birkhoff}) of a Reeb flow. Hence, it is certainly not true that $\cP(R_1)= \cP(R_2)$ for every Reeb flow of a given contact structure. But it happens to be true at least for some special cases of Reeb flows.

This is what Macarini and Paternain \cite[Theorem 2.1]{MP}, and independently Vaugon \cite{Vaug_unpublished} (see also \cite[Section 5]{FHV}) proved in the case of Reeb-Anosov flows. Hence we have:
\begin{proposition}[Macarini--Paternain, Vaugon]\label{prop_Reeb_flows_same_free_hom_data}
If $R_1$, $R_2$ are two Reeb-Anosov flows of the same Anosov contact structure $\xi$, then $\cP(R_1) = \cP(R_2)$. Moreover, for any other nondegenerate Reeb flow $R$ of $\xi$, $\cP(R_i) \subset \cP(R)$.
\end{proposition}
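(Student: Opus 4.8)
The plan is to reduce the statement to a single non-vanishing computation for cylindrical contact homology and then feed it into the invariance results recalled above.

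\textbf{Setting up the hypotheses.} First I would check that $R_1,R_2$ satisfy the standing assumptions. Since each $R_i$ is Reeb-Anosov, every periodic orbit is hyperbolic, so no linearized return map has a root of unity as an eigenvalue and $\alpha_i$ is nondegenerate. Moreover an Anosov flow on a closed $3$-manifold has no contractible periodic orbit: a periodic orbit is a fixed point of a nontrivial $g\in\pi_1(M)$ acting on $\cO\simeq\bR^2$, and contractibility of the orbit would force $g=\id$. Hence $\xi$ is hypertight and $\alpha_1,\alpha_2$ are nondegenerate and dynamically convex, so the isomorphism of cylindrical contact homologies and its corollary recalled above apply to the pair $(\alpha_1,\alpha_2)$.

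\textbf{The crux: non-vanishing.} The heart of the argument is to show that $C\bH^{[g]}_{\text{cyl}}(\alpha_1)\neq\{0\}$ for every $[g]\in\cP(R_1)$, and this is where I expect the real work (and the one genuine obstacle) to lie. Here I would exploit transverse orientability, which we assume throughout. Because $DR^t$ is a continuous family of maps issuing from the identity and preserves the \emph{orientable} line bundles $E^{ss}$ and $E^{uu}$, it preserves their orientations; hence the linearized return map $P_\gamma$ of every periodic orbit $\gamma$ has positive real eigenvalues, i.e. every orbit is positive hyperbolic and no orbit is ``bad''. The parity of the Conley--Zehnder index of a generator is intrinsic, being governed by $\mathrm{sign}\det(I-P_\gamma)$, so it is the \emph{same} for all these orbits. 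Fixing a free homotopy class $[g]$ and grading the orbits lying in $[g]$ by their index relative to a common reference framing, all generators then have the same parity; since the cylindrical differential shifts degree by one, it must vanish on the subcomplex spanned by $[g]$. Therefore $C\bH^{[g]}_{\text{cyl}}(\alpha_1)$ is the free $\mathbb{Q}$-vector space on the periodic $R_1$-orbits in class $[g]$, which is nonzero exactly because $[g]\in\cP(R_1)$. This is precisely the computation carried out by Macarini--Paternain \cite{MP} and, independently, Vaugon \cite{Vaug_unpublished}.

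\textbf{Equality and the refinement.} With non-vanishing established, the equality $\cP(R_1)=\cP(R_2)$ is immediate from the corollary above: if $[g]\in\cP(R_1)$ then $C\bH^{[g]}_{\text{cyl}}(\alpha_1)\neq\{0\}$, whence $[g]\in\cP(R_2)$, and exchanging $R_1,R_2$ gives the reverse inclusion. For the final assertion, let $R$ be an arbitrary nondegenerate Reeb flow of $\xi$ with contact form $\alpha$, and fix $[g]\in\cP(R_i)$; note $[g]$ is noncontractible. In a noncontractible class the cylindrical complex is generated by the periodic $R$-orbits in that class, and its homology remains invariant under a change of nondegenerate defining form once $\xi$ is hypertight (hypertightness furnishes the a priori bounds ruling out the bubbling of contractible orbits, so one does not need $\alpha$ itself to be dynamically convex). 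Thus $C\bH^{[g]}_{\text{cyl}}(\alpha)\cong C\bH^{[g]}_{\text{cyl}}(\alpha_i)\neq\{0\}$ by the crux, and a nonzero homology in class $[g]$ forces at least one generator, i.e. a periodic $R$-orbit freely homotopic to $g$. Hence $[g]\in\cP(R)$ and $\cP(R_i)\subset\cP(R)$, as claimed.
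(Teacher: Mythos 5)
Your proof follows essentially the same route as the paper: your ``crux'' --- transverse orientability forces every periodic orbit of a Reeb-Anosov flow to be positive hyperbolic, so all generators in a class $[g]\in\cP(R_i)$ have the same parity, the parity-shifting cylindrical differential vanishes, and $C\bH^{[g]}_{\text{cyl}}(\alpha_i)\neq\{0\}$ --- is exactly the paper's ``key remark,'' and the conclusion is then drawn from the same invariance theorem and its corollary. The one loose point is your parenthetical claim that hypertightness of $\xi$ by itself yields invariance for an \emph{arbitrary} nondegenerate form $\alpha$ (plane-bubbling is governed by contractible orbits of $\alpha$ itself, not of the hypertight reference form), but the paper elides the very same issue by deferring to Macarini--Paternain and Vaugon, so your argument matches the paper's approach and level of detail.
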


The key remark behind the proof of the above statement is that, for contact cylindrical homology, the differential of an odd (resp.~even) periodic orbit contains only even (resp.~odd) periodic orbits, where a non-degenerate periodic orbit is called even if it is hyperbolic with two positive real eigenvalues, and odd otherwise. Since transversally orientable Reeb-Anosov flows only have hyperbolic periodic orbits with both positive eigenvalues, there are only even orbits. Thus the differentials in the chain complex are all zero, and the result follows. Note that Macarini--Paternain prove that result without the assumption of transversal orientability.

It seems highly probable that this argument can at least be extended to all non-degenerate Reeb flows admitting only hyperbolic orbits (which is an a priori strictly larger class than the Reeb-Anosov flows). So, morally, the only free homotopy classes that may not contribute to contact homology are the ones containing elliptic orbits.
It would be very interesting to decide what more general conditions on a Reeb flow $R$ could imply that $[g]\in \cP(R)$ one has $C\bH^{[g]}_{\text{cyl}}(\alpha_1)\neq \{0\}$. 

Work of Colin--Honda--Laudenbach \cite{CHL} imply that \emph{any} area preserving diffeomorphism, in certain fixed isotopy classes, on a surface with one boundary can be realized as the return map to a Birkhoff section (see Definition \ref{def_Birkhoff}) of a Reeb flow. Hence, it is certainly not true that $\cP(R_1)= \cP(R_2)$ for every Reeb flow of a given contact structure.
%
% Recent works on Reeb flows \cite{CDHR24,CHR25,CM22} show that they enjoy many properties $C^{\infty}$-\emph{generically}. In particular, one can hope to use the more precise understanding of the dynamics of generic Reeb flows in the presence of elliptic periodic orbits obtained in \cite{CHR25}, to see whether generic Reeb flows share the same free homotopy data.

Notice further that we don't need all the differentials of the contact homology chain complex to be zero (as in the Reeb-Anosov case), only that the homology itself is non-zero to deduce information about the free homotopy data. Hence, we ask:
%
%Note that Macarini--Paternain and Vaugon prove an a priori much stronger result than what we would need here. They show that the differentials in the associated chain complex is trivial for Reeb-Anosov flows (and more generally for Reeb flows preserving a continuous Lagrangian subbundle in \cite{MP}). 
%Obtaining more general conditions under which this result hold would be a way of answering Question \ref{que_reeb_same_free_homotopy_data}.
\begin{question}\label{que_rmin}
Let $R$ be a Reeb flow of a dynamically convex contact $1$-form $\alpha$. Are there more general conditions on $R$ to ensure that $C\bH^{[g]}_{\text{cyl}}(\alpha)\neq \{0\}$ for any $[g]\in \cP(R)$? 

Are there at least some general conditions on $\alpha$ ensuring the existence of a Reeb flow $R_{\mathrm{min}}$ such that $C\bH^{[g]}_{\text{cyl}}(\alpha)\neq \{0\}$ for any $[g]\in \cP(R_{\mathrm{min}})$?
%
%More generally, under what conditions can we ensure that two Reeb flows $R_1, R_2$ of the same contact structures have the same free homotopy data?
\end{question}

 Proposition \ref{prop_Anosov_contact_implies_Anosov_supporting}, together with Theorem \ref{thm_free_homotopy_data}, gives one direction of Theorem \ref{thm_BMB}, i.e., the fact that any two Reeb-Anosov flows of the same contact structure are isotopically equivalent.
 
I will not go into details on the other direction of Theorem \ref{thm_BMB}, but it consists in showing (using results of Asaoka--Bonatti--Marty \cite{ABM}) that if two Reeb-Anosov flows are isotopically equivalent, with contact structures $\xi_1,\xi_2$, then one can build an open-book decomposition that supports both $\xi_1$ and (a structure isotopic to) $\xi_2$. Celebrated work of Giroux then implies that the $\xi_i$ are isotopic. See \cite[Theorem A.2]{BMB} for details.

\begin{rem}
In \cite{Zung}, Jonathan Zung managed to extend Theorem \ref{thm_BMB} in another direction, by replacing contact structures by \emph{stable Hamiltonian structures}, and using rational symplectic field theory instead of cylindrical contact homology.
\end{rem}

In the next section, I will discuss one type of contact structure, linked to Anosov flows, for which I would hope the above non-vanishing of cylindrical contact homology could hold.

\section{Anosov-supporting bicontact structures}\label{sec_anosov_supporting}

\begin{definition}\label{def_bicontact}
A \emph{bicontact structure} on $M$ is a pair $(\xi^+,\xi^-)$ of (coorientable) transverse contact structures, such that $\xi^+$ is positive and $\xi^-$ is negative\footnote{A coorientable contact structure $\xi$ is called \emph{positive} (resp.~\emph{negative}) if $\alpha \wedge d\alpha >0$ (resp.~$\alpha \wedge d\alpha <0$) for any contact form $\alpha$ defining $\xi$.}.

Given any $C^1$ vector field $X$, if $X\in \xi^+\cap \xi^-$, we say that the bicontact pair \emph{supports} $X$.
\end{definition}

Mistumatsu \cite{Mit95} and Eliashberg--Thurston \cite{ET98} noticed that any transversally orientable Anosov flows is supported by a bicontact structure, and showed that the existence of a supporting bicontact structure forces (and is in fact equivalent to) the flow admitting a \emph{dominated splitting}. 

\begin{definition}
A vector field $X$ is \emph{projectively Anosov} if $PX^t$\footnote{This flow has been called the \emph{linear Poincaré flow} for instance in \cite{ARH}.} the flow induced by $X^t$ on its normal bundle $TM/\langle X\rangle$, admits a dominated splitting. That is 
there exists a splitting $TM\langle X\rangle = E^{s} \oplus E^{u}$ that is invariant by $PX^t$ and such that
\begin{enumerate}[label=(\roman*)]
\item There exists $a,b>0$ such that for all $x\in M$ and $t\geq 0$ we have 
\[
\lVert PX^t|_{(E^{s}/\langle X\rangle)(x)}\rVert \lVert \left(PX^t|_{(E^{u}/\langle X\rangle)(x)}\right)^{-1}\rVert \leq ae^{-bt}.
\]
\item the distributions $E^{cs}$ and $E^{cu}$ on $TM$ induced by taking the preimages of $E^s$ and $E^u$ are coorientable.
\end{enumerate} 
\end{definition}

The coorientability condition is a priori not necessary in the definition, but it has been, as far as I know, a standing assumption in all the literature on projectively Anosov flows. That assumption is necessary to be able to link projectively Anosov flows and bicontact structures:
\begin{proposition}[Mitsumatsu \cite{Mit95}, Eliashberg--Thurston \cite{ET98}]
A vector field is projectively Anosov if and only if it is supported by a bicontact structure.

Moreover, if $X$ is supported by $(\xi^+, \xi^-)$, then $\xi^+, \xi^-, E^{cs}$ and $E^{cu}$ are pairwise transverse, with common intersection along $\langle X\rangle$, and, for all $x\in M$, $\xi^+(x)$, and $\xi^-(x)$ intersects opposite quadrants of $T_xM \smallsetminus \{E^{cs}, E^{cu}\}$. See Figure \ref{fig_quadrants}.
\end{proposition}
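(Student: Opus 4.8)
The plan is to reduce everything to the linear Poincaré flow $PX^t$ acting on the $2$-dimensional normal bundle $N := TM/\langle X\rangle$, and to translate both the contact conditions and the domination condition into statements about line fields in $N$. First I would set up the dictionary: a $2$-plane field containing $X$ is the same datum as a line field in $N$ (its image under $TM \to N$), two such plane fields are transverse exactly when the corresponding line fields are distinct, and a dominated splitting $E^s \oplus E^u$ of $N$ is equivalent to the existence of a cone field in $N$ that is strictly invariant and exponentially contracted by $PX^t$. The bridge between the contact side and the dynamical side is the elementary computation that, for any local frame $(X,Y,Z)$ with $\xi = \langle X,Y\rangle = \ker\alpha$ and $Z\notin\xi$, one has $d\alpha(X,Y)=-\alpha([X,Y])$ and hence $\alpha\wedge d\alpha\,(X,Y,Z) = -\alpha(Z)\,\alpha([X,Y])$. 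Thus the contact condition $\alpha\wedge d\alpha \gtrless 0$ is equivalent to $\alpha([X,Y])$ having a fixed sign, i.e.\ to the statement that the flow of $X$ twists the line $\langle Y\rangle = \xi/\langle X\rangle$ monotonically, and in a definite rotational sense, out of $\xi$. Positivity of $\xi^+$ and negativity of $\xi^-$ then say precisely that the line fields $\ell^+ := \xi^+/\langle X\rangle$ and $\ell^- := \xi^-/\langle X\rangle$ rotate monotonically in \emph{opposite} senses under $PX^t$.

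For the implication that a supporting bicontact pair forces $X$ to be projectively Anosov, I would argue as follows. Since $\xi^+,\xi^-$ are transverse and both contain $X$, their images $\ell^+,\ell^-$ are everywhere distinct line fields in $N$, cutting each fiber into two sectors; by the opposite monotonicity above, one of the two sectors, call it $C$, is carried strictly into its own interior by $PX^t$ for $t>0$. The nested family $\{PX^t(C)\}_{t\ge 0}$ then has, by the compactness argument discussed below, a single invariant line field $E^u := \bigcap_{t\ge 0}PX^t(C)$ as its intersection; running the complementary construction in backward time produces $E^s$, and one obtains an invariant splitting $E^s\oplus E^u$ of $N$. Transporting this back to $TM$ yields $E^{cs},E^{cu}$; since $E^u$ lies in the interior of $C$ and $E^s$ in the interior of the complementary sector, the two boundary lines $\ell^+,\ell^-$ of $C$ are separated by both $E^s$ and $E^u$, so the four line fields $\ell^+,\ell^-,E^s,E^u$ are pairwise distinct. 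This gives exactly the pairwise transversality of $\xi^+,\xi^-,E^{cs},E^{cu}$ with common intersection $\langle X\rangle$, together with the statement that $\ell^\pm$ occupy opposite quadrants of $N\smallsetminus\{E^s,E^u\}$ — i.e.\ the ``moreover'' part.

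For the converse, starting from a dominated splitting $E^s\oplus E^u$ I would exploit the strictly invariant unstable cone field $C^u$ around $E^u$: choosing two line fields $\ell^+,\ell^-$ in the interior of $C^u$, one slightly on each side of $E^u$, the strict contraction of $C^u$ forces $\ell^+$ and $\ell^-$ to twist in opposite monotone senses, which by the computation of the first paragraph makes the associated plane fields $\xi^\pm$ contact of the correct sign. One then checks that $\xi^+$ is positive and $\xi^-$ negative after fixing orientations, that they are transverse, and uses the coorientability hypothesis on $E^{cs},E^{cu}$ to ensure the line fields, and hence the contact forms $\alpha^\pm$, are globally defined rather than merely defined up to sign.

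The main obstacle I expect is the \emph{quantitative} step in the bicontact $\Rightarrow$ projectively Anosov direction. The contact conditions directly give only an infinitesimal, strict, a priori non-uniform twisting, whereas the definition demands both that the nested sectors collapse to a genuine line field (not merely a thinner cone) and the uniform exponential estimate $\lVert PX^t|_{E^s}\rVert\,\lVert (PX^t|_{E^u})^{-1}\rVert \le a e^{-bt}$. Both of these require compactness of $M$: the strict inward motion of the sector boundaries, being continuous and strict on the compact unit normal bundle, is bounded below uniformly, and a standard subadditivity/compactness argument then upgrades pointwise strict contraction into uniform collapse of the cone and the exponential domination rate. The remaining bookkeeping — matching transverse orientations to the signs of $\alpha^\pm\wedge d\alpha^\pm$ and verifying the coorientability clause — is routine once this quantitative core is in place.
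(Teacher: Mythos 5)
First, a point of reference: the paper does not prove this proposition at all --- it is quoted from Mitsumatsu \cite{Mit95} and Eliashberg--Thurston \cite{ET98} --- so your proposal can only be measured against the standard arguments in those references. Your first direction (bicontact $\Rightarrow$ projectively Anosov, together with the ``moreover'' clause) is essentially that standard argument and is sound in outline: the identity $\alpha\wedge d\alpha(X,Y,Z)=-\alpha(Z)\,\alpha([X,Y])$ is correct; the sector into which both boundary lines $\ell^+,\ell^-$ rotate is strictly invariant; and compactness (uniform contraction of the projective/Hilbert metric on the nested sectors) both collapses the intersection to a line field and yields the exponential rate. Two small additions you should make there: check that the resulting $E^{cs},E^{cu}$ are coorientable (the paper's definition of projectively Anosov requires this; it holds because they sit inside the sectors cut out by the cooriented $\xi^\pm$), and note that strict invariance for small $t>0$ propagates to all $t>0$ by the semigroup property before invoking the cone criterion.

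The genuine gap is in the converse. You assert that choosing $\ell^\pm$ ``slightly on each side of $E^u$'' inside a strictly invariant cone ``forces $\ell^+$ and $\ell^-$ to twist in opposite monotone senses.'' This step fails for an arbitrary such choice. The contact condition is the \emph{instantaneous} condition $\alpha([X,Y])\neq 0$, and $[X,Y]$ involves the first derivatives of the chosen line field along the flow; those derivatives are entirely at your disposal, so one can wiggle $\ell^+$ inside the cone so that at some points it rotates toward $E^u$ faster than its image under $PX^t$ does, making the twist vanish or reverse there. Domination controls only asymptotic attraction of lines in the cone, never the sign of this derivative. Closing this direction requires exactly the two ingredients that form the technical core of \cite{Mit95,ET98}, neither of which appears in your proposal: (1) an adapted (Lyapunov) metric --- which the paper itself recalls exists --- making the domination ratio strictly decreasing already at $t=0$ at every point, combined with a \emph{specific} choice of line fields, e.g.\ Mitsumatsu's diagonals $\langle e_s\pm e_u\rangle$ or fixed-slope fields in the frame $(E^u,E^s)$, for which the twist rate can be computed to be strict and of opposite signs; and (2) a regularity repair: $E^s$ and $E^u$ are in general only continuous (H\"older), so any field defined relative to them is merely continuous, whereas a contact structure must be at least $C^1$. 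Since the infinitesimal twisting condition is not open in the $C^0$ topology, one cannot just smooth at the end naively; one must either reformulate strict twisting over a fixed positive time (a $C^0$-open condition) and then convert back, or approximate the invariant bundles by fields that are close in the ``continuous with continuous flow-derivative'' sense, so that the twist-rate estimate survives. As written, your $\xi^\pm$ need not be contact, nor even differentiable.
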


\begin{figure}[h]
	\labellist 
	\small\hair 2pt
	\pinlabel $E^{cs}$ at 230 90 %130
	\pinlabel $E^{cu}$ at 100 190
	\pinlabel $\xi^-$ at 160 20 
	 \pinlabel $\xi^+$ at 60 20 
	\endlabellist
	\centerline{ \mbox{
			\includegraphics[width=4cm]{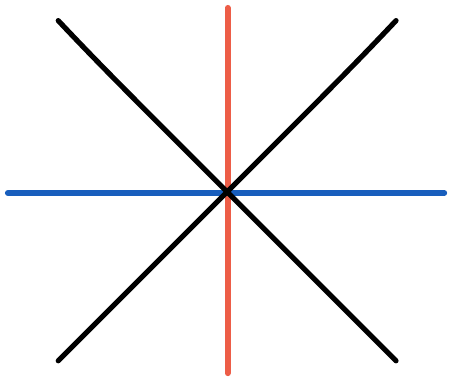}}}
	\caption{The contact structures $\xi^\pm$ in their respective quadrants} 
		\label{fig_quadrants} 
\end{figure}

In fact, one obtains the invariant distributions $E^{cu}$ (resp.~$E^{cs}$) as a limit as $t\to +\infty$ (resp.~$t\to -\infty$) of $(X^t)_\ast \xi^\pm$. Notice that a given projectively Anosov vector field $X$ is always supported by many distinct bicontact structures, for instance, for any choice of $t_1, t_2$, $\left((X^{t_1})_\ast \xi^+,(X^{t_2})_\ast \xi^-\right)$ supports $X$.

%The fact that $\xi^+$ and $\xi^-$ are in opposite connected components of $T_xM \smallsetminus \{E^{cs}, E^{cu}\}$ allows one to make the following definition:
Hozoori \cite{Hoz24} proved that the existence of Reeb flows of $\xi^\pm$ that are transverse to both invariant distributions $E^{cu}$ and $E^{cs}$, characterizes Anosov flows amongst projectively Anosov ones. To state his result, we thus introduce another terminology:
\begin{definition}
Given a bicontact structure $(\xi^+, \xi^-)$, a Reeb flow $R^\pm$ for $\xi^\pm$ is said to be \emph{bitransverse} if $R^\pm$ is transverse to \emph{both} $E^{cs}$ and $E^{cu}$.
%stays in one of the quadrants $TM \smallsetminus \{E^{cs}, E^{cu}\}$.
\end{definition}

 With this definition, Hozoori's result can be stated as follows
\begin{theorem}[Hozoori]\label{thm_Hozoori}
Let $X$ be a projectively Anosov vector field. The following are equivalent
\begin{enumerate}
\item $X$ is Anosov;
\item There exists a bicontact structure $(\xi^+,\xi^-)$ supporting $X$ and such that a Reeb field $R^\pm$ of $\xi^\pm$ is bitransverse.
\item For every bicontact structure $(\xi^+,\xi^-)$ supporting $X$ there exists Reeb fields $R^+$ of $\xi^+$ and $R^-$ of $\xi^-$ that are bitransverse.
\end{enumerate}
\end{theorem}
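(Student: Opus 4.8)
The plan is to prove the cycle $(3)\Rightarrow(2)\Rightarrow(1)\Rightarrow(3)$, with $(2)\Rightarrow(1)$ as the analytic heart and $(3)\Rightarrow(2)$ essentially free: since $X$ is projectively Anosov, the Mitsumatsu--Eliashberg--Thurston proposition furnishes at least one supporting bicontact structure, and applying $(3)$ to it produces bitransverse Reeb fields, i.e. $(2)$. For the two substantive directions I would first record a pointwise reformulation of bitransversality. Fix a supporting pair with contact forms $\alpha^\pm$ and Reeb fields $R^\pm$, and work in a frame $\{X,s,u\}$ with $s$ spanning $E^{cs}/\langle X\rangle$ and $u$ spanning $E^{cu}/\langle X\rangle$; since $X\in\xi^+=\ker\alpha^+$ one has $\alpha^+=p\,\eta^s+q\,\eta^u$ in the dual coframe, with $p=\alpha^+(s)$ and $q=\alpha^+(u)$ both nonzero because $\xi^+$ is transverse to $E^{cs}$ and $E^{cu}$. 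Writing $\beta:=\iota_X d\alpha^+$, one computes $\beta(R^+)=-(\iota_{R^+}d\alpha^+)(X)=0$, so in the normal bundle $N=TM/\langle X\rangle$ the class $[R^+]$ is spanned by $\beta(u)\,s-\beta(s)\,u$. Since $[E^{cs}]=E^s$ and $[E^{cu}]=E^u$ are the dominated directions, this yields
\[
R^+ \text{ is bitransverse} \iff d\alpha^+(X,s)\neq0 \ \text{ and } \ d\alpha^+(X,u)\neq0 .
\]
This is the conceptual core: bitransversality is exactly the non-degeneracy of $\iota_X d\alpha^+$ on each of $E^{ss}$ and $E^{uu}$.

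For $(2)\Rightarrow(1)$ I would then convert this non-degeneracy into uniform exponential rates, working entirely in $N$ so that no (a priori unavailable) integrability of $E^{cs},E^{cu}$ is needed. Let $\lambda^u(t,x)$ be the multiplier of the linear Poincaré flow on the invariant line $E^u$. Differentiating along the flow and using $\mathcal{L}_X\alpha^+=\beta$ gives
\[
\frac{d}{dt}\log\bigl|\alpha^+_{X^t x}(DX^t u)\bigr|=\frac{d\alpha^+(X,u)}{\alpha^+(u)}\bigl(X^t x\bigr),
\]
so that, the prefactors being bounded by compactness,
\[
\lambda^u(t,x)\asymp \exp\!\int_0^t \frac{d\alpha^+(X,u)}{\alpha^+(u)}\bigl(X^\tau x\bigr)\,d\tau,
\]
and symmetrically $\lambda^s(t,x)\asymp \exp\!\int_0^t \bigl(d\alpha^+(X,s)/\alpha^+(s)\bigr)(X^\tau x)\,d\tau$. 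By the reformulation the two integrands are continuous and nowhere zero, hence on the compact connected $M$ they are bounded away from $0$ with a constant sign; thus each multiplier is \emph{genuinely} exponential rather than merely dominated. It then remains to fix the signs so that $E^u$ expands and $E^s$ contracts. Here I would invoke the orientations: the positivity of $\alpha^+$ and negativity of $\alpha^-$ (using the bitransverse $R^-$ as well), together with the fact that $E^{cu}$ is the forward and $E^{cs}$ the backward limit of $(X^t)_\ast\xi^\pm$, to pin the integrands to the expanding, resp.\ contracting, sign. As the flow direction contributes only bounded stretch, uniform expansion of $E^u$ and contraction of $E^s$ in $N$ upgrades, by a routine cohomological-equation argument, to genuine strong bundles $E^{uu}\subset E^{cu}$ and $E^{ss}\subset E^{cs}$ contracting in the ambient metric, i.e.\ $X$ is Anosov.

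I expect the sign-matching step to be the main obstacle. The computation above shows unconditionally that a bitransverse Reeb field forces each normal multiplier to be uniformly exponential, but domination alone controls only the \emph{difference} of the two rates, not their individual signs; ruling out the degenerate possibilities that both normal directions expand, or both contract, is precisely where the contact-geometric input — the opposite signs of $\xi^+$ and $\xi^-$ and the prescribed quadrants occupied by the Reeb directions — must be used in an essential way. Making this sign bookkeeping uniform rather than merely pointwise is the delicate part of the argument.

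Finally, for $(1)\Rightarrow(3)$ I would reverse the reduction. If $X$ is Anosov, the splitting is genuinely hyperbolic, so the normal multipliers carry the correct signs intrinsically, and for an arbitrary supporting bicontact $(\xi^+,\xi^-)$ it suffices to exhibit contact forms within $\xi^\pm$ satisfying $d\alpha^\pm(X,s)\neq0\neq d\alpha^\pm(X,u)$. Rescaling $\alpha^+\mapsto f\alpha^+$ with $f>0$ replaces $\beta$ by $X(f)\,\alpha^++f\,\beta$, hence sends $d\alpha^+(X,u)$ to $X(f)\,q+f\,d\alpha^+(X,u)$ and similarly for $s$; since $p,q\neq0$, the genuine hyperbolicity of $X$ lets one choose $f$ (varying suitably along orbits) so that both expressions stay nowhere zero. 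This produces bitransverse Reeb fields for every supporting bicontact structure, yielding $(3)$ and closing the cycle.
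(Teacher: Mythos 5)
Your pointwise reformulation of bitransversality --- $R^+$ is bitransverse iff $d\alpha^+(X,s)\neq 0$ and $d\alpha^+(X,u)\neq 0$ --- is correct (it is exactly Hozoori's ``adapted form'' condition, i.e.\ the content of Proposition \ref{prop_bitransverse_implies_adapted}), and your multiplier identity in $(2)\Rightarrow(1)$ is sound. Note, however, that the paper proves none of this from scratch: its proof of Theorem \ref{thm_Hozoori} is a pure reduction to \cite[Theorem 1.10]{Hoz24} and \cite[Lemmas 3.1 and 5.3]{Hoz24b}, so you are attempting to reprove Hozoori's theorem, and two of your steps are genuine gaps. The first is the sign-matching in $(2)\Rightarrow(1)$, which you acknowledge; worse, your proposed repair invokes ``the bitransverse $R^-$ as well'', while hypothesis (2) --- as the paper reads it via Hozoori's Theorem 1.10 --- supplies a bitransverse Reeb field for only \emph{one} of $\xi^+,\xi^-$. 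The good news is that no contact-geometric input is needed here: if both normal multipliers contracted uniformly, then, since the flow-direction stretch and the angle between $E^s$ and $E^u$ are bounded on the compact $M$, the Jacobian $|\det DX^t|$ would decay uniformly exponentially, contradicting $\mathrm{vol}(X^t(M))=\mathrm{vol}(M)$; applying this to $X^{-t}$ rules out double expansion, and domination then forces the expanding line to be $E^u$. (Your final upgrade from hyperbolicity of the linear Poincar\'e flow to genuine Anosovity is the standard Doering-type argument and is fine to treat as known.)

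The serious gap is $(1)\Rightarrow(3)$, which is the hard existence direction of Hozoori's theorem and which you dispose of in one sentence. You cannot ``choose $f$ varying suitably along orbits'': $X(\log f)$ is a coboundary, so it integrates to zero around every periodic orbit, and its values along orbits are therefore heavily constrained. What your rescaling must actually achieve is the cohomological inequality $h_1 < X(\log f) < h_2$, where $h_1=-d\alpha^+(X,u)/\alpha^+(u)$ and $h_2=-d\alpha^+(X,s)/\alpha^+(s)$; indeed, on any periodic orbit $\gamma$ one has $\int_\gamma X(\log f)=0$ while $\int_\gamma h_1=-\log|\lambda^u(\gamma)|$ and $\int_\gamma h_2=-\log|\lambda^s(\gamma)|$, so nonvanishing forces $X(\log f)-h_1>0>X(\log f)-h_2$ along $\gamma$. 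The necessary conditions do hold --- $h_1<h_2$ pointwise is (up to orientation bookkeeping) the contact condition for $\alpha^+$, and the periodic integrals have the right signs precisely because $X$ is Anosov --- but producing a \emph{single} function $g=\log f$ satisfying both strict inequalities simultaneously, for an \emph{arbitrary} supporting bicontact pair in which the coefficients $p,q$ oscillate uncontrollably, is a Livschitz/sub-action-type theorem, not a routine choice; and such soft constructions a priori yield $g$ only H\"older continuous, whereas $f\alpha^+$ must be a smooth contact form. This is exactly the content the paper delegates to \cite[Theorem 1.10]{Hoz24} and \cite[Lemma 5.3]{Hoz24b}, so as written your cycle of implications does not close.
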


\begin{definition}\label{def_Anosov_supporting_contact}
If $\xi^\pm$ is a contact structure that is part of a bicontact pair $(\xi^+,\xi^-)$ that supports an Anosov flow $X$, we say that $\xi^\pm$ is an \emph{Anosov-supporting} contact structure.
\end{definition}

Another consequence of Hozoori's work is that a bitransverse Reeb field must be in a specific quadrant of $TM \smallsetminus \{E^{cs}, E^{cu}\}$:
\begin{proposition}[Hozoori \cite{Hoz24b}] \label{prop_bitransverse_implies_adapted}
If $R^\pm$ is a bitransverse Reeb flow of $\xi^\pm$, where $(\xi^+,\xi^-)$ is a bicontact structure supporting $X$, then $R^\pm$ stays in one of the connected component of $TM \smallsetminus \{E^{cs}, E^{cu}\}$ not containing $\xi^\pm$.
\end{proposition}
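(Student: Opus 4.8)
The plan is to reduce the statement to a pointwise claim in the quotient bundle $N := TM/\langle X\rangle$ carrying the linear Poincar\'e flow $PX^t$, and then to determine one sign using the asymptotic contraction of the strong bundles. Since a bitransverse Reeb field exists, Theorem \ref{thm_Hozoori} tells us $X$ is Anosov, so I may use genuine strong bundles $E^{ss}, E^{uu}$ with $E^{cs}=E^{ss}\oplus\langle X\rangle$ and $E^{cu}=E^{uu}\oplus\langle X\rangle$. Writing $\bar{\,\cdot\,}$ for the projection $T_xM\to N_x$ and $E^s=\overline{E^{cs}}$, $E^u=\overline{E^{cu}}$ for the two invariant lines, I would fix global frames $v_s\in E^{ss}$, $v_u\in E^{uu}$ (using transverse orientability) and a contact form $\alpha$ for $\xi^+$; the case of $\xi^-$ is identical. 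Because $X\in\xi^+$ we have $\alpha(X)=0$, so $\beta:=i_Xd\alpha=\mathcal{L}_X\alpha$ descends to $\bar\beta$ on $N$, and since $i_Rd\alpha=0$ forces $\ker\beta=\langle X,R\rangle$, one gets $\overline R=\ker\bar\beta$.

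Setting $\beta_s:=d\alpha(X,v_s)$ and $\beta_u:=d\alpha(X,v_u)$, the line $\overline R=\ker\bar\beta$ has slope $-\beta_s/\beta_u$ in the $(v_s,v_u)$-coordinates on $N_x$, while $\overline{\xi^+}$ has slope $m=-\alpha(v_s)/\alpha(v_u)$. A vector lies in a component of $T_xM\setminus\{E^{cs},E^{cu}\}$ disjoint from $\xi^+$ precisely when its image in $N_x$ has slope of sign opposite to $m$; so the proposition becomes the sign identity $\operatorname{sign}(\beta_s\beta_u)=\operatorname{sign}(m)=-\operatorname{sign}(\alpha(v_s)\alpha(v_u))$. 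Note that bitransversality says exactly $\beta_s,\beta_u\neq0$ everywhere, so each has constant sign on the connected $M$; the whole problem is to pin those signs down.

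This sign determination is the step I expect to be the crux, since the pointwise values of $\alpha$ and $d\alpha$ do not fix it. My plan is to use the flow. For $w\in E^{ss}\setminus\{0\}$ consider $P(t):=[(X^t)^*\alpha](w)$. As $E^{ss}\cap\xi^+=\{0\}$, $P$ is nowhere zero, and $P'(t)=[(X^t)^*\beta](w)$ gives $P'/P=\beta(u)/\alpha(u)$ with $u=(X^t)_*w\in E^{ss}_{X^t x}$. This ratio is scaling-invariant and nowhere zero---its numerator vanishes only if $\overline u\in\ker\bar\beta=\overline R$, impossible by bitransversality---hence of constant sign equal to $\operatorname{sign}(\beta_s/\alpha(v_s))$. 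Since $\|(X^t)_*w\|\to0$ as $t\to+\infty$, $P(t)\to0$, so $\int_0^\infty P'/P=-\infty$; a constant-sign integrand whose integral is $-\infty$ must be negative, giving $\operatorname{sign}(\beta_s)=-\operatorname{sign}(\alpha(v_s))$. Running the same argument on $E^{uu}$ with $t\to-\infty$ yields $\operatorname{sign}(\beta_u)=+\operatorname{sign}(\alpha(v_u))$.

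Multiplying, $\operatorname{sign}(\beta_s\beta_u)=-\operatorname{sign}(\alpha(v_s)\alpha(v_u))=\operatorname{sign}(m)$, which is exactly the reformulated claim, so $\overline R$---and hence $R$---lies in the component complementary to $\xi^+$; the same computation for $\xi^-$ finishes the proof. It is worth noting that this route never invokes the positivity of $\xi^+$, which is consistent: positivity would only be needed to single out which of the two complementary wedges $R$ enters (governed by the coorientation through $\alpha(R)=+1$), whereas the proposition asks only for the unordered pair. The delicate point, and where I would be most careful, is the interchange between the linear-algebra reformulation and the dynamical sign computation: it is bitransversality that keeps $\beta/\alpha$ of constant sign along entire strong orbits, and the genuine (not merely projective) Anosov contraction that fixes that sign.
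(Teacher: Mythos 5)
Your reduction to a sign identity in $N=TM/\langle X\rangle$ and the computation that follows are correct \emph{once $X$ is known to be genuinely Anosov}, but the way you obtain that hypothesis is a genuine gap. Your first step invokes Theorem \ref{thm_Hozoori} to conclude that $X$ is Anosov from the mere existence of a bitransverse Reeb flow. Within this paper that is circular: the proof of Theorem \ref{thm_Hozoori} given here \emph{uses} Proposition \ref{prop_bitransverse_implies_adapted} --- the implication ``there exists a bitransverse Reeb flow $\Rightarrow$ $X$ is Anosov'' is deduced by first applying the proposition to show that a bitransverse Reeb flow is dynamically negative, and only then quoting Theorem 1.10 of \cite{Hoz24}, whose hypothesis is dynamical negativity, not bitransversality. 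Consequently the proposition must be proved for $X$ merely projectively Anosov (which is all that ``$(\xi^+,\xi^-)$ supports $X$'' provides, by Mitsumatsu--Eliashberg--Thurston); assuming Anosovity assumes away precisely the case the proposition exists to handle. (For comparison, the paper gives no self-contained proof at all: it points to Lemma 3.1 of \cite{Hoz24b}, which is stated in the projectively Anosov setting.)

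Moreover, the Anosov hypothesis is not cosmetic in your argument: it is exactly what makes your crux step work, and it is unavailable in the required generality. For a projectively Anosov flow there are no strong bundles $E^{ss}$, $E^{uu}$, and the linear Poincar\'e flow satisfies only \emph{domination} (decay of ratios of growth), not absolute contraction; so your function $P(t)=\alpha\bigl((X^t)_*w\bigr)$ has no reason to tend to $0$, and the sign of $P'/P$ can no longer be read off from $\int_0^\infty P'/P=-\infty$. You flag this dependence yourself in your final sentence, but it is fatal rather than a caveat. What survives is the reformulation: taking $v_s\in E^{cs}\smallsetminus\langle X\rangle$ and $v_u\in E^{cu}\smallsetminus\langle X\rangle$, the ratios $g=\beta(v_s)/\alpha(v_s)$ and $h=\beta(v_u)/\alpha(v_u)$ are nowhere zero (by bitransversality and transversality of $\xi^+$ to $E^{cs},E^{cu}$), hence of constant sign, and the proposition is equivalent to $gh<0$ pointwise. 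But domination only yields $\int_0^T(g-h)\,dt\to-\infty$ along orbits, which is compatible with both $0<g<h$ and $g<h<0$; excluding these two patterns is the real content and requires an ingredient your proof does not contain. For instance: if $g$ and $h$ had the same sign, compactness would give a uniform bound $|g|,|h|\geq c>0$ with a common sign, so \emph{both} normal directions would be uniformly exponentially expanded (or contracted) by $PX^t$, contradicting $\int_M \det(DX^t)\,d\mathrm{vol}=\mathrm{vol}(M)$ for all $t$. An argument of that kind (essentially what Hozoori's Lemma 3.1 supplies, and which simultaneously yields the Anosovity you tried to import) is the missing piece; as written, your proof establishes the proposition only in the Anosov case, and reaches that case circularly.
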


While not explicitly stated in his article, the above result is a direct consequence of Lemma 3.1 in \cite{Hoz24b}. In particular, it shows that if $R^\pm$ is a bitransverse Reeb flow, then it is \emph{dynamically negative} (for $R^+$) or \emph{dynamically positive} (for $R^-$) in the terminology of \cite{Hoz24}. It is also equivalent to the contact $1$-form $\alpha^\pm$ defining $R^\pm$ to be \emph{adapted} in the terminology of \cite{Hoz24b}.

As Hozoori's Theorem is not stated exactly as above in his work, we provide a quick explanation:
\begin{proof}[Proof of Theorem \ref{thm_Hozoori}]
Theorem 1.10 in \cite{Hoz24} (restated as Corollary 3.3 in \cite{Hoz24b}) says that $X$ is an Anosov flow if and only if there exists a bicontact structure $(\xi^+,\xi^-)$ supporting $X$ and such that a Reeb flow, say $R^+$ of $\xi^+$ is dynamically negative.
The fact that this condition holds for one bicontact pair if and only if it if holds for all pairs follows from a quick argument as in \cite[Lemma 5.3]{Hoz24b}.
Finally, Proposition \ref{prop_bitransverse_implies_adapted} says that the condition of being dynamically positive/negative is equivalent to the bitransverse condition.
\end{proof}

Note that Hozoori, as well as Massoni \cite{Mass23}, gave another characterization of the Anosovity of a projectively Anosov flow in terms of the existence of certain Liouville symplectic forms. Keeping with the spirit of this note, where I am more interested in the dynamics of Reeb flows, I will not use these other characterizations.

A very useful consequence of Hozoori's Theorem is:
\begin{corollary}[Hozoori]\label{cor_Hozoori}
Let $R^+$ be the Reeb flow of a contact $1$-form $\alpha^+$ such that 
\begin{enumerate}[label=(\roman*)]
\item $(\ker \alpha^+, \xi^-)$ is a bicontact structure,
\item $R^+ \in \xi^-$.
\end{enumerate}
Then any $X\in \ker \alpha^+\cap \xi^-$ is Anosov.
\end{corollary}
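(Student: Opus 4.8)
The plan is to reduce the statement to Hozoori's Theorem \ref{thm_Hozoori}, using the Reeb field $R^+$ itself as the required bitransverse field. First I would fix any nonsingular $X \in \ker\alpha^+ \cap \xi^-$ and observe that, by Definition \ref{def_bicontact}, the bicontact pair $(\ker\alpha^+, \xi^-)$ supports $X$; hence by the Mitsumatsu--Eliashberg--Thurston Proposition, $X$ is projectively Anosov, with invariant distributions $E^{cs}$ and $E^{cu}$. Taking $X$ to be the vector field whose Anosovity we wish to establish, the implication $(2)\Rightarrow(1)$ of Theorem \ref{thm_Hozoori} applies as soon as we exhibit a Reeb field of $\ker\alpha^+$ or of $\xi^-$ that is bitransverse for this $X$. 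I claim $R^+$ is such a field.

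To verify bitransversality of $R^+$, I would invoke the ``Moreover'' part of the Mitsumatsu--Eliashberg--Thurston Proposition: the four distributions $\ker\alpha^+$, $\xi^-$, $E^{cs}$, $E^{cu}$ are pairwise transverse with common intersection $\langle X\rangle$. In particular, since $\xi^-$ and $E^{cs}$ are transverse $2$-planes in the $3$-dimensional $T_xM$ that both contain $\langle X\rangle$, their intersection is exactly the line $\langle X(x)\rangle$; likewise $\xi^-(x)\cap E^{cu}(x) = \langle X(x)\rangle$. Now the hypothesis $R^+\in\xi^-$ gives $R^+(x)\in\xi^-(x)$, so if $R^+(x)$ lay in $E^{cs}(x)$ it would lie in $\xi^-(x)\cap E^{cs}(x) = \langle X(x)\rangle$, i.e.\ $R^+(x)$ would be a multiple of $X(x)$. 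But $X\in\ker\alpha^+$ gives $\alpha^+(X)=0$, whereas $\alpha^+(R^+)=1$, so $R^+(x)$ is never parallel to $X(x)$. Hence $R^+$ is transverse to $E^{cs}$, and by the identical argument to $E^{cu}$, so $R^+$ is bitransverse. Applying $(2)\Rightarrow(1)$ of Theorem \ref{thm_Hozoori} then shows $X$ is Anosov.

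The only genuine content of the argument is this verification, and the crux is recognizing that the hypothesis $R^+\in\xi^-$ does exactly the right job: it confines $R^+$ to the plane $\xi^-$, whose intersections with $E^{cs}$ and $E^{cu}$ are both the single line $\langle X\rangle$, so that the elementary Reeb nonvanishing $\alpha^+(R^+)=1\neq 0 = \alpha^+(X)$ already rules out $R^+\in E^{cs}\cup E^{cu}$. I expect this to be the main (and essentially only) subtlety; everything else is a direct assembly of the Mitsumatsu--Eliashberg--Thurston Proposition and Theorem \ref{thm_Hozoori}. One should also note that $\ker\alpha^+\cap\xi^-$ is a line field, so every admissible $X$ directs the same orbits up to reparametrization, which makes the conclusion independent of the particular choice of $X$.
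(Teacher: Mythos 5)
Your proof is correct and takes exactly the route the paper intends: the corollary is presented there as a direct consequence of Theorem \ref{thm_Hozoori}, and your verification that $R^+$ is bitransverse (via the pairwise intersections $\xi^-\cap E^{cs}=\xi^-\cap E^{cu}=\langle X\rangle$ together with $\alpha^+(R^+)=1\neq 0=\alpha^+(X)$) supplies precisely the detail the paper leaves implicit before invoking the implication $(2)\Rightarrow(1)$ of that theorem.
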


Among all contact structures, it seems difficult to directly detect which are Anosov contact or which are Anosov-supporting (without actually exhibiting the Anosov Reeb flow, or the Anosov-supporting bicontact pair). There are a few known properties that such structure must satisfy, for instance they must be hypertight (see \cite[Theorem 6.4]{Hoz24}). But satisfying these known properties is a priori very far from being a sufficient condition. %For instance, Nonino constructs in \cite{Nonino} examples of hyperbolic $3$-manifolds admitting tight contact structures but no (even projectively) Anosov flows. 
(See also \cite{Mass23,CLMM} for some of the properties of the associated Liouville symplectic structures.)
 
 As a first step in trying to better understand these contact structures, we will discuss the relationship between the class of Anosov contact structures and that of Anosov-supporting contact structures.
\begin{question}\label{que_Anosov_supporting_and_Anosov}
When is an Anosov-supporting contact structure Anosov?
\end{question}
I believe that Anosov-supporting contact structure should rarely be Anosov (see Conjectures \ref{conj_trichotomy} and \ref{conj_skew_and_regulating} for a precise statement). Indeed, work of Fenley \cite{Fen05} implies the following
 \begin{theorem}\label{thm_bitransverse_Anosov_imply_skew}
 Let $(\xi^+,\xi^-)$ be a bicontact structure supporting an Anosov flow $X$. If $\xi^+$ admits a bitransverse Reeb flow $R^+$ that is Anosov, then $X$ is (positively) skew and isotopically equivalent to $R^+$.
\end{theorem}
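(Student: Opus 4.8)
The plan is to transfer the (positive) skewness of the Reeb-Anosov flow $R^+$ to $X$ by comparing their orbit spaces, the bridge being the transversality of $R^+$ to the weak foliations of $X$. First I would record the two inputs that are essentially free. Since $R^+$ is a Reeb-Anosov flow of the \emph{positive} contact structure $\xi^+$, Marty's Theorem~\ref{thm_martyequivalence} (or Barbot \cite{Bar01} for skewness alone) gives that $R^+$ is positively skew; in particular its orbit space $\cO_{R^+}$ is a diagonal strip and $R^+$ is $\bR$-covered. Second, since $R^+$ is bitransverse, it is transverse to $E^{cs}$ and $E^{cu}$, which for the Anosov flow $X$ integrate to the weak foliations $\fs$ and $\fu$; hence $R^+$ is a flow transverse to \emph{both} weak foliations of $X$.

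The heart of the argument is to promote this transversality to a \emph{regulating} property (Definition~\ref{def_regulating}) and to conclude that $X$ is $\bR$-covered. Working in $\wt M$, I would show that each orbit of $\wt{R^+}$ meets every leaf of $\wt{\fs}$ and of $\wt{\fu}$: this is exactly where the work of Fenley \cite{Fen05} on regulating flows enters, the transversality to both foliations of an Anosov flow being restrictive enough to rule out the non-separated leaves that would obstruct regulation. Granting that $R^+$ regulates $\fs$ and $\fu$, each leaf space $\Lambda(\fs)$, $\Lambda(\fu)$ is a continuous image of any single orbit of $\wt{R^+}$, hence connected and Hausdorff, and therefore homeomorphic to $\bR$; by the Barbot--Fenley trichotomy, $X$ must then be $\bR$-covered.

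To finish, I would upgrade this to a $\pi_1(M)$-equivariant identification of orbit spaces. Using that $R^+$ regulates both foliations of $X$, I would collapse $\wt M$ along the orbits of $\wt{R^+}$ to obtain a map onto $\cO_{R^+}$ and push forward $\wt{\fs}$ and $\wt{\fu}$, producing two transverse, $\pi_1(M)$-invariant foliations of the strip $\cO_{R^+}$, and then check that the resulting bifoliated plane is equivariantly isomorphic to $(\cO_X,\wt{\fs},\wt{\fu})$. Such an isomorphism conjugates the two $\pi_1(M)$-actions, so Barbot's converse \cite{Bar_caracterisation} yields that $X$ and $R^+$ are isotopically equivalent; and since isomorphic bifoliated planes share the skew, indeed positively skew, type, $X$ is positively skew as well. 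This last paragraph---making the regulating step rigorous and producing the genuinely equivariant isomorphism rather than a mere abstract homeomorphism of planes---is the step I expect to be the main obstacle, and it is precisely the content I would extract from \cite{Fen05}.
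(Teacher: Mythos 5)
Your opening moves are fine (positive skewness of $R^+$ via Barbot/Marty, and transversality of $R^+$ to both weak foliations of $X$), but the step you call the heart of the argument is not merely the main obstacle --- it is false. You propose to show that $R^+$ is \emph{regulating} for the weak foliations of $X$. No skew Anosov flow can regulate any foliation: by Lemma \ref{lem_regulating_implies_no_pf} (whose proof uses that any non-suspension Anosov flow has pairs of periodic orbits freely homotopic to each other's inverses), a pseudo-Anosov flow regulating a foliation must either be a suspension or have singular orbits, and $R^+$ is neither --- it is Anosov, and being Reeb-Anosov it is skew by \cite{Bar01}, hence not a suspension. So the property you want to ``promote'' transversality to is precisely the property that must be ruled out. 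Fenley's Theorem C in \cite{Fen05} does give a dichotomy for a skew Anosov flow transverse to a foliation, but the branch that survives is the other one: $\cF^s_X$ (and likewise $\cF^u_X$) is $\bR$-covered and topologically equivalent to a \emph{blow-up} of the stable or unstable foliation of $R^+$; this is exactly Theorem \ref{thm_fenley_transverse_to_R-covered}, in which Lemma \ref{lem_regulating_implies_no_pf} is used to kill the regulating alternative. Note also that the $\bR$-coveredness of $\cF^s_X$, hence of $X$, comes for free from Fenley's theorem and never needed regulation. Your final construction --- collapsing $\wt M$ along orbits of $\wt{R^+}$ to push the foliations of $X$ onto $\cO_{R^+}$ and obtain an equivariant isomorphism of bifoliated planes --- requires every orbit of $\wt{R^+}$ to meet every leaf, i.e., the regulating property, so it fails along with it.

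The paper's proof takes the surviving branch and finishes differently: since $\cF^s_X$ is a blow-up of (say) $\cF^s_{R^+}$, an element $g \in \pi_1(M)$ is freely homotopic to a closed curve in a cylindrical leaf of one foliation if and only if it is for the other, whence $\cP(X) = \cP(R^+)$; as both flows are $\bR$-covered Anosov flows, Theorem \ref{thm_free_homotopy_data} upgrades equality of free homotopy data to an isotopic equivalence (no hand-made conjugacy of $\pi_1(M)$-actions on orbit spaces is needed), and positive skewness of $X$ follows from that of $R^+$ because the equivalence is homotopic to the identity. If you want to salvage your outline, replace ``regulating'' by the blow-up alternative of Fenley's dichotomy, and replace the collapsing construction by this free-homotopy-data argument.
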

\begin{rem}
In fact, for the above result to hold, it is enough to assume that the Reeb-Anosov flow $R^+$ is transverse to just \emph{one} of the stable or unstable foliations of $X$.
\end{rem}

\begin{figure}[h]
	\labellist 
	\small\hair 2pt
	\pinlabel $E^{cs}_X$ at 160 20  %130
	\pinlabel $E^{cu}_X$ at 120 190
	\pinlabel $X$ at  80 180 %
	 \pinlabel $\xi^+$ at -5 120 
  \pinlabel $R^+$ at 70 5 
	\endlabellist
	\centerline{ \mbox{
			\includegraphics[width=4cm]{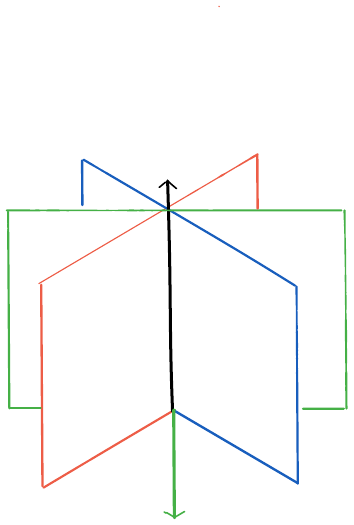}}}
	\caption{A bitransverse Reeb vector field $R^+$} 
		\label{fig_Anosov_xi} 
\end{figure}

Since positively skew Anosov flows cannot be orbit equivalent (via a map preserving the orientation of $M$, so in particular cannot be isotopically equivalent) to negatively skew ones, an immediate corollary of the above result is:
\begin{corollary}
Let $(\xi^+,\xi^-)$ be a bicontact structure supporting an Anosov flow $X$. If $\xi^+$ admits a bitransverse Reeb flow $R^+$ that is Anosov, then $\xi^-$ cannot admit a bitransverse Anosov Reeb flow.
\end{corollary}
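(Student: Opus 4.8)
The plan is to run Theorem \ref{thm_bitransverse_Anosov_imply_skew} on both sides of the bicontact pair and to read off the resulting sign of skewness from Marty's equivalence (Theorem \ref{thm_martyequivalence}). Suppose, for contradiction, that in addition to the bitransverse Anosov Reeb flow $R^+$ of $\xi^+$, the contact structure $\xi^-$ also admits a bitransverse Anosov Reeb flow $R^-$.

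First I would extract only the isotopic-equivalence conclusion of Theorem \ref{thm_bitransverse_Anosov_imply_skew}, namely that $X$ is isotopically equivalent to $R^+$. The very same argument applies to $\xi^-$: as noted in the remark following that theorem, its proof uses only that the Reeb-Anosov flow is transverse to one of the two invariant foliations of $X$, and a bitransverse $R^-$ is by definition transverse to both $E^{cs}$ and $E^{cu}$, hence to both $\cF^s$ and $\cF^u$. Thus $X$ is isotopically equivalent to $R^-$ as well, and therefore $R^+$ and $R^-$ are isotopically equivalent to each other.

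Next I would fix the sign of skewness of each flow from Theorem \ref{thm_martyequivalence}, sidestepping any need to track sign conventions through the proof of Theorem \ref{thm_bitransverse_Anosov_imply_skew}. Since $R^+$ is a Reeb-Anosov flow of the \emph{positive} contact structure $\xi^+$, Marty's theorem forces $R^+$ to be positively skew; since $R^-$ is a Reeb-Anosov flow of the \emph{negative} contact structure $\xi^-$, the same theorem forces $R^-$ to be negatively skew. But an isotopic equivalence is in particular an orientation-preserving orbit equivalence, and a positively skew Anosov flow cannot be orientation-preservingly orbit equivalent to a negatively skew one. This contradicts the isotopic equivalence of $R^+$ and $R^-$ obtained above, proving that $\xi^-$ cannot admit a bitransverse Anosov Reeb flow.

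The genuinely delicate point, and the step I would be most careful about, is the sign bookkeeping. Rather than proving a ``$\xi^-$-version'' of the skewness half of Theorem \ref{thm_bitransverse_Anosov_imply_skew} by hand (which would require checking that the orientation reversal swapping $\xi^+\leftrightarrow\xi^-$ indeed flips positive skewness to negative), I isolate the entire sign dependence inside Marty's theorem, where it is already encoded in the positivity or negativity of the ambient contact structure. The only fact external to the cited results that must be verified is that the sign of skewness is a genuine invariant under orientation-preserving orbit equivalence, which is precisely the statement invoked in the sentence preceding the corollary.
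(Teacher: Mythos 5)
Your proposal is correct and takes essentially the same route as the paper: the corollary there is an immediate consequence of applying Theorem~\ref{thm_bitransverse_Anosov_imply_skew} on both sides of the bicontact pair and invoking the fact, stated just before the corollary, that a positively skew Anosov flow cannot be isotopically equivalent to a negatively skew one. Your only departure---reading off the skewness signs from Marty's Theorem~\ref{thm_martyequivalence} rather than from the sign conclusion of Theorem~\ref{thm_bitransverse_Anosov_imply_skew} itself---is sound bookkeeping that makes explicit what the paper leaves implicit.
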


To show the theorem, we will use the following result of Fenley about (pseudo)-Anosov flows transverse to foliations.
\begin{theorem}[Fenley \cite{Fen05}]\label{thm_fenley_transverse_to_R-covered}
Let $Y$ be a skew Anosov flow transverse to a foliation $\cF$. Then $\cF$ is $\bR$-covered. Moreover, $\cF$ is topologically equivalent to a \emph{blow-up} of the stable or unstable foliation of $Y$.
\end{theorem}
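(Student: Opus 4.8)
The plan is to work in the universal cover and reduce the whole statement to the assertion that the lifted flow $\wt Y$ is \emph{regulating} for $\wt\cF$ in the sense of Definition \ref{def_regulating}. Write $\wt M$ for the universal cover, $\wt Y$ and $\wt\cF$ for the lifts of $Y$ and $\cF$, and $\cO = \cO_Y$ for the orbit space, with projection $\pi\colon \wt M \to \cO$; since $Y$ is skew, $\cO$ is a skew strip and the stable/unstable leaf spaces of $Y$ are homeomorphic to $\bR$. I first record two standard facts. Because $\cF$ is transverse to the nonsingular flow $Y$ on the closed manifold $M$, the foliation $\cF$ is taut, hence Reebless; as $M$ carries an Anosov flow it is aspherical and irreducible, so $\wt M \cong \bR^3$ and every leaf of $\wt\cF$ is a properly embedded plane. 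Moreover, by compactness of $M$ the angle between $\cF$ and $Y$ is bounded below, so $\wt\cF$ is \emph{uniformly} transverse to $\wt Y$.

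Next I analyze a single leaf $\wt L$ of $\wt\cF$. Being a properly embedded plane, $\wt L$ separates $\wt M \cong \bR^3$ into two halves; transversality together with connectedness forces $\wt Y$ to cross $\wt L$ with a globally constant coorientation, so that every orbit of $\wt Y$ meets $\wt L$ \emph{at most once} (a second crossing would occur in the opposite direction). Since $\pi$ collapses orbits and $\wt L$ is transverse to them, $\pi|_{\wt L}$ is a local homeomorphism, and the at-most-once property makes it injective; hence $\pi|_{\wt L}$ is a homeomorphism of $\wt L$ onto an open set $U_{\wt L} = \pi(\wt L) \subseteq \cO$. The theorem now reduces to showing $U_{\wt L} = \cO$ for every leaf: this is exactly the statement that $\wt Y$ is regulating, and then, as noted right after Definition \ref{def_regulating}, $\cF$ is automatically $\bR$-covered.

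The main step, and the real obstacle, is therefore to prove that $\wt Y$ is regulating, and this is where the skew hypothesis is essential (it fails for general Anosov flows). As $\cO$ is connected and $U_{\wt L}$ is open, it suffices to show $U_{\wt L}$ is closed. Closedness can only fail if the section $(\pi|_{\wt L})^{-1}$ escapes to flow-time $\pm\infty$ as one approaches a point $\theta_\infty \in \partial U_{\wt L}$, i.e.\ if $\wt L$ meets the orbits near $\pi^{-1}(\theta_\infty)$ only at times tending to $\pm\infty$. To rule this out I would use that a skew Anosov flow is \emph{uniform} $\bR$-covered (Barbot, Fenley): in the skew strip each stable leaf crosses precisely a unit-length family of unstable leaves, so nearby orbits stay a bounded flow-distance apart. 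This uniformity controls the shearing of $\wt Y$ over a compact region of $\cO$ and forces a uniformly transverse section to remain at bounded flow-time there, contradicting the escape to infinity; hence $U_{\wt L}$ is closed and $\wt Y$ is regulating. Equivalently, one can argue that a failure of $\bR$-coveredness of $\cF$ produces a pair of non-separated leaves whose images would partition $\cO$ incompatibly with the lozenge-free product structure of a skew orbit space. This controlled-shearing estimate is the technical heart of the argument.

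Finally I upgrade regulating to the stated conclusion. By the previous step every leaf of $\wt\cF$ is a global section of $\pi$; distinct leaves give disjoint sections, which are linearly ordered along each orbit, and transversality makes this order independent of the chosen orbit. Hence $\Lambda(\wt\cF)$ is order-isomorphic to a single orbit $\cong \bR$, recovering that $\cF$ is $\bR$-covered, now with an explicit $\pi_1(M)$-action on $\Lambda(\wt\cF)\cong\bR$. To pin down the topological type, I would compare this action with the $\pi_1(M)$-action on the stable leaf space $\Lambda(\wt\cF^s)\cong\bR$ (or the unstable one $\Lambda(\wt\cF^u)$): using the skew product structure of $\cO$ one builds a monotone, $\pi_1(M)$-equivariant map $\Lambda(\wt\cF)\to\Lambda(\wt\cF^s)$ whose only non-injectivity occurs along closed intervals, which correspond exactly to blown-up leaves. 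Since an $\bR$-covered foliation is determined up to topological equivalence by the $\pi_1(M)$-action on its leaf space, this exhibits $\cF$ as topologically equivalent to a blow-up of $\cF^s$ or $\cF^u$, completing the proof.
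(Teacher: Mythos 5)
Your central step is false, and it inverts the logic of the actual theorem. You reduce everything to showing that $\wt Y$ is regulating for $\wt\cF$ and assert that ``this is where the skew hypothesis is essential.'' It is exactly the opposite: a skew Anosov flow can \emph{never} be regulating for a transverse foliation. This is precisely Lemma \ref{lem_regulating_implies_no_pf} of the paper: a pseudo-Anosov flow regulating a foliation must be a suspension or have singular orbits, and a skew Anosov flow is neither (the skew case of the Barbot--Fenley trichotomy excludes suspensions, and Anosov means no prongs). Concretely, a non-suspension Anosov flow has periodic orbits $\alpha_1,\alpha_2$ freely homotopic to each other's inverses (see \cite[Theorem 2.15]{BBGRH}); if $Y$ regulated $\cF$, the element $g$ represented by $\alpha_1$ would act on $\Lambda(\wt\cF)\cong\bR$ as a translation in the positive direction determined by the flow's coorientation (which is $\pi_1(M)$-invariant), and the class of $\alpha_2$, a conjugate of $g^{-1}$, would have to translate positively as well --- a contradiction. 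So the ``controlled-shearing estimate'' that you flag as the technical heart cannot exist: for a skew flow the open set $U_{\wt L}=\pi(\wt L)$ is genuinely a proper subset of $\cO$, and your closedness argument must fail. Your parenthetical alternative is also wrong on the facts: a skew orbit space is not lozenge-free --- every periodic orbit of a skew non-suspension flow is a corner of lozenges; it is the \emph{trivial} (suspension) orbit space that contains none.

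The correct structure, which is how the paper argues, is a dichotomy rather than a single regulating claim: Theorem C of Fenley \cite{Fen05} gives that $\cF$ is $\bR$-covered and that \emph{either} $Y$ is regulating for $\cF$ \emph{or} $\cF$ is topologically equivalent to a blow-up of $\cF^s$ or $\cF^u$; the skew hypothesis is then used, via Lemma \ref{lem_regulating_implies_no_pf}, to exclude the regulating branch, leaving the blow-up conclusion. You have placed the blow-up conclusion in the regulating branch, where it could not hold anyway: if every leaf of $\wt\cF$ were a global section of $\pi$, the foliation would be a uniform ``horizontal'' foliation transverse to all orbits, qualitatively unlike a blow-up of the stable foliation, and the monotone $\pi_1(M)$-equivariant map $\Lambda(\wt\cF)\to\Lambda(\wt\cF^s)$ you posit in your last paragraph is asserted without any construction. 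What does survive from your attempt are the preliminary observations (tautness of $\cF$, leaves of $\wt\cF$ are properly embedded planes, each orbit of $\wt Y$ meets each leaf at most once, so $\pi|_{\wt L}$ is an open embedding): these are correct and standard, and they are indeed the starting point of Fenley's analysis, but the substance of the theorem lies in what happens on the boundary of $U_{\wt L}$ in the non-regulating case, which your proposal does not address.
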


Since this statement is not directly written in \cite{Fen05}, we provide the argument:
\begin{proof}
Theorem C of \cite{Fen05} gives that $\cF$ must be $\bR$-covered, and that either $Y$ is regulating (see Definition \ref{def_regulating}), or $\cF$ is topologically equivalent to a blow-up of the stable or unstable foliation of $Y$. The next lemma shows that only suspension Anosov flows can be regulating to a foliation, and the assumption says that $Y$ is not a suspension.
\end{proof}

\begin{lemma}\label{lem_regulating_implies_no_pf}
If $Y$ is a pseudo-Anosov flow regulating to a foliation $\cF$, then either $Y$ is a suspension of an Anosov diffeomorphism or has some singular orbits.
\end{lemma}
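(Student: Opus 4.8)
The plan is to prove the contrapositive in the only nontrivial case. If $Y$ has a singular orbit there is nothing to do, since that is the second alternative in the statement, so I would assume $Y$ is nonsingular (hence Anosov) and show that a regulating $Y$ must be a suspension; by the Barbot--Fenley trichotomy recalled above this amounts to showing that the orbit space $\cO_Y$ is trivial. The strategy is to extract a rigid product structure from the regulating hypothesis and then contradict it using the lozenges forced by a nontrivial orbit space.

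First I would extract that product structure. After passing to a finite cover --- which changes neither the presence of singular orbits nor the type (trivial, skew, or non-Hausdorff) of $\cO_Y$ --- I may assume $\cF$ is coorientable and that $\wt Y$ is positively transverse to $\wt\cF$, so that a forward flow segment crosses the leaves of $\wt\cF$ monotonically in the coorientation. Monotonicity forces each orbit of $\wt Y$ to meet each leaf of $\wt\cF$ \emph{at most} once, while the regulating hypothesis forces \emph{at least} once; hence
\[
\wt M \longrightarrow \cO_Y \times \Lambda(\cF), \qquad x \longmapsto \bigl(\,\mathrm{orbit}(x),\ \mathrm{leaf}(x)\,\bigr),
\]
is a $\pi_1(M)$-equivariant homeomorphism, with $\Lambda(\cF)\cong\bR$ (so $\cF$ is $\bR$-covered) and with the action a product $g\cdot(o,s)=(g\cdot o,\ \sigma(g)\,s)$ for some $\sigma\colon \pi_1(M)\to \mathrm{Homeo}^+(\bR)$. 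The key elementary point is a positivity statement for $\sigma$: if $\gamma$ is a periodic orbit oriented by the flow with forward class $g=[\gamma]$, then $g$ fixes the corresponding point $p\in\cO_Y$ and translates the lifted orbit $\{p\}\times\Lambda(\cF)$ forward, which by positive transversality increases the $\Lambda(\cF)$-coordinate, so $\sigma(g)(s)>s$. As $\sigma(g)$ is independent of the fibre, $\sigma(g)(s)>s$ for \emph{all} $s$; that is, $\sigma(g)$ is a fixed-point-free ``positive'' homeomorphism for every forward periodic class $g$. (This is consistent with the suspension case, where $\sigma$ factors through the fibration $\pi_1(M)\to\bZ$ and every forward periodic class has strictly positive translation number.)

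Next I would feed in the lozenge structure to contradict nontriviality of $\cO_Y$. If $Y$ is skew or has non-Hausdorff leaf spaces, then by Barbot--Fenley $\cO_Y$ contains a lozenge, whose two corners $p\neq q$ are periodic orbits fixed by a common $g\neq 1$. The defining perfect fit of a lozenge pairs a stable half-leaf at one corner with an unstable half-leaf at the other, so $g$ contracts toward $p$ along $\fs$ while contracting toward $q$ along $\fu$; hence $g$ translates $\wt\gamma_p$ forward but $\wt\gamma_q$ \emph{backward}. Thus both $g=[\gamma_p]$ and $g^{-1}=[\gamma_q]$ are forward periodic classes, and the positivity statement forces both $\sigma(g)$ and $\sigma(g)^{-1}$ to push every point of $\bR$ strictly upward --- an impossibility. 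Therefore $\cO_Y$ is trivial and $Y$ is a suspension.

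I expect the orientation bookkeeping at the corners of the lozenge --- the verification that the common period $g$ is flow-forward on one corner and flow-backward on the other --- to be the delicate point, together with the routine but necessary check that passing to a finite cover preserves the trivial/skew/non-Hausdorff dichotomy as well as the regulating property. Once these are in hand, the product-structure step and the positivity of $\sigma$ are essentially formal.
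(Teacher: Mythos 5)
Your argument is correct and is essentially the paper's own proof, written out in full: the paper assumes $Y$ is nonsingular and not a suspension, invokes the known fact (cited to \cite[Theorem 2.15]{BBGRH}) that such a flow has two periodic orbits freely homotopic to each other's inverses --- which is exactly the lozenge-corner pair $\gamma_p,\gamma_q$ you construct via Barbot--Fenley --- and then declares this ``easily seen to be incompatible'' with regulating, which is precisely your product-structure and positivity-of-translation argument on $\Lambda(\cF)$. You have simply re-derived the cited ingredient and spelled out the incompatibility that the paper leaves implicit.
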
 
\begin{rem}
While we do not need it here, one can be more precise, and show that a pseudo-Anosov flow that is regulating to a foliation has no perfect fits.
\end{rem}

\begin{proof}
Suppose that $Y$ is an Anosov flow (i.e., does not have any singular orbit). If $Y$ is not a suspension, then there exists periodic orbits $\alpha_1,\alpha_2$ that are freely homotopic to the inverse of each other (see, e.g., \cite[Theorem 2.15]{BBGRH}). This is easily seen to be incompatible with being regulating to a foliation.
\end{proof}

Using Fenley's result, we can quickly deduce Theorem \ref{thm_bitransverse_Anosov_imply_skew}

\begin{proof}[Proof of Theorem \ref{thm_bitransverse_Anosov_imply_skew}]
By assumption, $R^+$ is Anosov and transverse to, say, $\cF^s_Y$, the stable foliation of $Y$. By Theorem \ref{thm_fenley_transverse_to_R-covered}, we deduce that $\cF^s_Y$ is $\bR$-covered and is topologically equivalent to, a priori a blow-up of, $\cF_{R^+}^s$, the stable foliation of $R^+$. But one easily sees that this implies that $Y$ and $R^+$ are orbit equivalent. This can be deduced directly from the arguments in \cite{Fen05}, but here, we will use instead Theorem \ref{thm_free_homotopy_data}: An element $g\in \pi_1(M)$ is in $\cP(Y)$ if and only if it is freely homotopic to a closed curve in a cylindrical leaf of $\cF^s_Y$. Since $\cF^s_Y$ is a blow-up of $\cF_{R^+}^s$, $g$ represents a cylindrical leaf of $\cF^s_Y$ if and only if it represents a cylindrical leaf of $\cF_{R^+}^s$. Hence $\cP(Y) = \cP(R^+)$, and as they are both $\bR$-covered Anosov flows, Theorem \ref{thm_free_homotopy_data} imply that they are isotopically equivalent.
\end{proof}

Theorem \ref{thm_bitransverse_Anosov_imply_skew} gives a reason to believe that Anosov-supporting contact structure that are also Anosov should be in some sense rare (or rigid). Now we consider their existence:

\begin{proposition}\label{prop_Anosov_contact_implies_Anosov_supporting}
Let $\beta$ be an Anosov contact structure, then $\beta$ is Anosov-supporting. 
More precisely, if $\beta$ is a positive contact structure, $X$ is an Anosov Reeb flow of $\beta$ and $(\xi^+,\xi^-)$ is a bicontact structure supporting $X$, then $(\beta,\xi^-)$ is a bicontact structure supporting an Anosov flow $Y$ that is isotopically equivalent to $X$.

In particular, $Y$ is the \emph{unique}, up to orbit equivalence, Anosov flow supported by $\beta$.
\end{proposition}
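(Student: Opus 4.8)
The plan is to build the bicontact pair directly from Hozoori's Corollary \ref{cor_Hozoori}, identify the supported flow with $X$ via Theorem \ref{thm_bitransverse_Anosov_imply_skew}, and then bootstrap to uniqueness.

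\emph{Step 1 (building the pair).} Let $X$ be an Anosov Reeb flow of $\beta=\ker\alpha$, so $\alpha(X)=1$. Since $X$ is Anosov and transversely orientable, Mitsumatsu \cite{Mit95} and Eliashberg--Thurston \cite{ET98} provide a bicontact pair $(\xi^+,\xi^-)$ supporting $X$; in particular $X\in\xi^-$. Because $\alpha(X)=1\neq0$ we have $X\notin\beta$, so $\beta$ and $\xi^-$ are distinct $2$-planes at every point, hence transverse; as $\beta$ is positive and $\xi^-$ negative, $(\beta,\xi^-)$ is a bicontact structure. The form $\alpha$ now satisfies the hypotheses of Corollary \ref{cor_Hozoori} (its Reeb field $X$ lies in $\xi^-$), so any nonsingular $Y$ spanning the line field $\beta\cap\xi^-$ is Anosov, and $(\beta,\xi^-)$ supports $Y$.

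\emph{Step 2 (identifying $Y$ with $X$).} I would next verify that $X$ is \emph{bitransverse} for the pair $(\beta,\xi^-)$ supporting $Y$. Since $Y\in\beta$ and $X\notin\beta$, the vectors $X,Y$ are independent and span $\xi^-$; hence, modulo $\langle Y\rangle$, $X$ points along the $\xi^-$-direction, which by the Mitsumatsu--Eliashberg--Thurston quadrant picture lies in the interior of one of the quadrants cut out by $E^{cs}_Y,E^{cu}_Y$. Thus $X$ is transverse to both $E^{cs}_Y$ and $E^{cu}_Y$, i.e. bitransverse. Applying Theorem \ref{thm_bitransverse_Anosov_imply_skew} to $(\beta,\xi^-)$ supporting $Y$, with the bitransverse Anosov Reeb flow $X$ of $\beta$, shows that $Y$ is positively skew and isotopically equivalent to $X$.

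\emph{Step 3 (uniqueness).} Let $Z$ be any Anosov flow supported by $\beta$, via a pair $(\beta,\gamma)$; the goal is to show $Z$ is isotopically equivalent to $X$, hence to $Y$. I would reduce this to exhibiting, for $(\beta,\gamma)$, a Reeb flow $R^+$ of $\beta$ that is simultaneously Anosov and bitransverse. Given such $R^+$, Theorem \ref{thm_bitransverse_Anosov_imply_skew} gives that $Z$ is isotopically equivalent to $R^+$, while Proposition \ref{prop_Reeb_flows_same_free_hom_data} yields $\cP(R^+)=\cP(X)$; since Reeb-Anosov flows are skew, hence $\bR$-covered, by Barbot \cite{Bar01}, the $\bR$-covered case of Theorem \ref{thm_free_homotopy_data} gives that $R^+$ is isotopically equivalent to $X$. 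Chaining these identifications yields $Z\simeq X\simeq Y$, and uniqueness follows.

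\emph{Expected main obstacle.} The delicate point is Step 3. Hozoori's Theorem \ref{thm_Hozoori} guarantees, since $Z$ is Anosov, that $\beta$ admits \emph{some} bitransverse Reeb flow for $(\beta,\gamma)$, but not that it is Anosov; dually, the Reeb-Anosov flow $X$ need not a priori be bitransverse for $(\beta,\gamma)$, since the quadrant argument of Step 2 used the special feature $X\in\xi^-$, which fails for a general $\gamma$. The crux is therefore to show that $X$ lands in the correct component of $TM\smallsetminus\{E^{cs}_Z,E^{cu}_Z\}$ for \emph{every} supporting pair $(\beta,\gamma)$ --- equivalently, via Proposition \ref{prop_bitransverse_implies_adapted}, that $\alpha$ is adapted to $Z$. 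I expect this to follow from a Hozoori-style analysis of the cone positions forced by $\beta$ being positive together with the Anosovity of $X$, but it is the step requiring genuine care.
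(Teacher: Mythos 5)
Your Steps 1--2 correctly prove the main (``More precisely'') part of the Proposition, and they do so by a genuinely different route than the paper. Step 1 coincides with the paper's opening: $\beta$ and $\xi^-$ are transverse because $X\in\xi^-$ while $\alpha(X)=1$ forces $X\notin\beta$, and then Corollary \ref{cor_Hozoori} gives Anosovity of $Y$. The divergence is in identifying $Y$ with $X$. You check that $X$ is bitransverse for the pair $(\beta,\xi^-)$ supporting $Y$ --- your quadrant argument is right, and can be stated even more directly: by the Mitsumatsu--Eliashberg--Thurston proposition applied to $(\beta,\xi^-)$, one has $\xi^-\cap E^{cs}_Y=\xi^-\cap E^{cu}_Y=\langle Y\rangle$, and $X\in\xi^-$ is nowhere a multiple of $Y$ (since $Y\in\beta$, $X\notin\beta$), so $X$ avoids both distributions --- and then you quote Theorem \ref{thm_bitransverse_Anosov_imply_skew} as a black box. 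The paper instead establishes transversality in the opposite direction: since $\beta=E^{ss}_X\oplus E^{uu}_X$ and $\xi^-\cap\beta$ avoids $E^{ss}_X$ and $E^{uu}_X$, the flow $Y$ is transverse to the weak foliations of $X$; it then reruns the Fenley machinery by hand (Fenley's Main Theorem, exclusion of the regulating case via Lemma \ref{lem_regulating_implies_no_pf} and Plante's theorem on solvmanifolds, then Theorem \ref{thm_fenley_transverse_to_R-covered} and the free-homotopy-data argument). Your version is shorter because it delegates all of that to the already-proved Theorem \ref{thm_bitransverse_Anosov_imply_skew}; the price is the bitransversality check, which you supply correctly.

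Step 3 is where the proposal is incomplete, as you yourself acknowledge. You read the uniqueness clause in its strongest sense: every Anosov flow $Z$ supported by an arbitrary pair $(\beta,\gamma)$ is orbit equivalent to $Y$. Under that reading, your ``main obstacle'' is genuine and not a technicality: Hozoori's Theorem \ref{thm_Hozoori} produces a bitransverse Reeb flow of $\beta$ with no control on its Anosovity, while $X$ is Anosov but your Step 2 argument for its bitransversality used the special feature $X\in\xi^-$, which fails for a general $\gamma$. Closing this gap is essentially the same problem as removing the bitransversality hypothesis from Theorem \ref{thm_bitransverse_Anosov_imply_skew}, which the paper explicitly singles out as the missing step toward Conjecture \ref{conj_trichotomy}; so a ``Hozoori-style cone analysis'' should not be expected to be routine. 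Note, however, that the paper's own proof never addresses uniqueness either: its ``In particular'' is immediate only under the weaker reading that the orbit-equivalence class of the constructed flow $Y$ is independent of all choices made (the Anosov Reeb flow $X$ of $\beta$ --- any two being isotopically equivalent by Proposition \ref{prop_Reeb_flows_same_free_hom_data} and Theorem \ref{thm_free_homotopy_data} --- and the supporting pair $(\xi^+,\xi^-)$), because every flow so constructed is isotopically equivalent to $X$. If that weaker statement is what is intended, your Steps 1--2 together with this remark already complete the proof, and Step 3 can be dropped.
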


\begin{figure}[h]
	\labellist 
	\small\hair 2pt
	\pinlabel $E^{ss}_X$ at 50 60 %130
	\pinlabel $E^{uu}_X$ at 170 10
	\pinlabel $\xi^-$ at 260 120 
	 \pinlabel $\xi^+$ at 100 20 
\pinlabel $\beta$ at 10 20 
  \pinlabel $X$ at 180 180 
   \pinlabel $Y$ at 260 15 
	\endlabellist
	\centerline{ \mbox{
			\includegraphics[width=7cm]{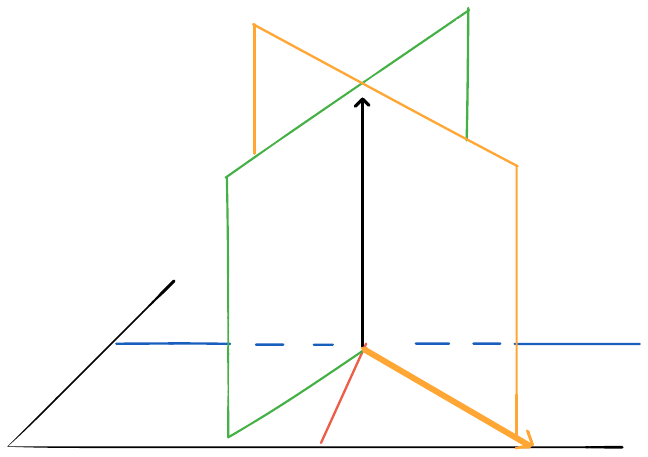}}}
	\caption{The vector fields $X$, $Y$ and the three contact structures} 
		\label{fig_contact_bicontact} 
\end{figure}

 Our proof of this proposition again uses Fenley's work, as well as Hozoori's theorem.
\begin{proof}
First $\beta$ and $\xi^-$ are of distinct signs, coorientable, and transverse (since $X$ is in  $\xi^-$ but never in $\beta$), so they define a bicontact structure. Then, as $X$ is a Reeb flow of $\beta$ and $X\in \xi^-$, Corollary \ref{cor_Hozoori} implies that $Y \in \beta\cap\xi^-$ is Anosov.

Now, since $X$ is Reeb Anosov, $\beta = E^{ss}_X \oplus E^{uu}_X$, and since $(\xi^+,\xi^-)$ supports $X$, $\xi^-\cap \beta$ is transverse to both $E^{ss}_X$ and $E^{uu}_X$. In particular, as $Y\in \xi^-\cap \beta$, it is transverse to $\bR\cdot X \oplus  E^{ss}_X$ and to $\bR\cdot X \oplus  E^{uu}_X$. See Figure \ref{fig_contact_bicontact}. Since $X$ is Reeb-Anosov, its weak stable foliation is $\bR$-covered, and so $Y$ is an Anosov flow transverse to an $\bR$-covered foliation.
By \cite[Main Theorem]{Fen05}, if $Y$ was not $\bR$-covered, then it would have to be regulating to $\cF_X^s$. By Lemma \ref{lem_regulating_implies_no_pf}, this would imply that $Y$ must be a suspension Anosov flow. But then $M$ is a solvmanifold, which is incompatible with admitting a Reeb-Anosov flow (by Plante \cite{Plante81}, any Anosov flow on a solvmanifold is a suspension of an Anosov diffeomorphism).

Thus, we obtained that $Y$ is $\bR$-covered, and (since $M$ is not a solvmanifold) must be skew. Since $Y$ is transverse to $\cF^s_X$, and is skew, we can now use Theorem \ref{thm_fenley_transverse_to_R-covered} to deduce that $\cF^s_X$ must be a blow-up of the stable foliation of $Y$. Then, as in the end of the proof of Theorem \ref{thm_bitransverse_Anosov_imply_skew}, we deduce from Theorem \ref{thm_free_homotopy_data} that $X$ and $Y$ are isotopically equivalent. 
\end{proof} 

 Jonathan Bowden and Thomas Massoni gave another argument to show that if $\beta$ is an Anosov contact structure, then it is Anosov-supporting:
 \begin{proposition}[Bowden--Massoni, work in preparation]
 If $\beta$ is a positive contact structure, $X$ is an Anosov Reeb flow of $\beta$ and $(\xi^+,\xi^-)$ is a bicontact structure supporting $X$, then $\beta$ and $\xi^+$ are isotopic.
 \end{proposition}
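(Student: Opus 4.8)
The plan is to produce an explicit path of positive contact structures from $\xi^+$ to $\beta$ and then invoke Gray stability to conclude that they are isotopic. Throughout I use that, since $X$ is a Reeb-Anosov flow of $\beta$, one has $\beta = E^{ss}_X \oplus E^{uu}_X$, that $i_X d\alpha = 0$ for the contact form $\alpha$ with $\ker\alpha=\beta$, and that $X \in \xi^+$ with $\xi^+$ transverse to both $E^{cs}$ and $E^{cu}$.

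First I set up an adapted coframe. Using transverse orientability, choose nowhere-vanishing sections $e_s$ of $E^{ss}_X$ and $e_u$ of $E^{uu}_X$, and let $(\alpha,\alpha_s,\alpha_u)$ be the coframe dual to $(X,e_s,e_u)$, so that $\ker\alpha=\beta$, $\ker\alpha_u=E^{cs}$ and $\ker\alpha_s=E^{cu}$. Since $i_Xd\alpha=0$, a short computation gives $d\alpha = f\,\alpha_s\wedge\alpha_u$ for a function $f$, and $\alpha\wedge d\alpha>0$ forces $f>0$ (orienting $M$ by $\Omega := \alpha\wedge\alpha_s\wedge\alpha_u$ and adjusting the signs of $e_s,e_u$ if needed). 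Because $X\in\xi^+$ and $\xi^+$ is transverse to $E^{cs}$ and $E^{cu}$, we may write $\xi^+=\ker\gamma^+$ with $\gamma^+=\alpha_s-c\,\alpha_u$ for a nowhere-zero function $c$, and positivity of $\xi^+$ reads $\gamma^+\wedge d\gamma^+>0$.

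The key simplification is that $\gamma^+\wedge d\alpha = f\,(\alpha_s-c\,\alpha_u)\wedge\alpha_s\wedge\alpha_u=0$. I consider the straight-line family $\gamma_t=(1-t)\gamma^++t\,\alpha$; every $\gamma_t$ annihilates the common direction spanning $\xi^+\cap\beta$, so the family simply rotates the plane field about that axis from $\xi^+$ to $\beta$. Writing $\gamma^+\wedge d\gamma^+=A\,\Omega$, $\alpha\wedge d\alpha=f\,\Omega$ and $\alpha\wedge d\gamma^+=P\,\Omega$, and using $\gamma^+\wedge d\alpha=0$, one gets $\gamma_t\wedge d\gamma_t=\big(A(1-t)^2+P(1-t)t+f\,t^2\big)\,\Omega$, so that $\gamma_t$ is a positive contact form for every $t\in[0,1]$ precisely when the quadratic $A(1-t)^2+P(1-t)t+f\,t^2$ is positive. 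We have $A>0$ and $f>0$, and $\alpha\wedge d\gamma^+=d(\gamma^+\wedge\alpha)$ is exact, so $\int_M P\,\Omega=0$.

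The main obstacle is controlling the cross term $P$. Since $P$ has vanishing $\Omega$-average it must change sign, so the straight-line family need not consist of contact structures: the quadratic stays positive on $[0,1]$ exactly where $P^2<4Af$, and this can fail where $P$ is very negative. The heart of the argument is therefore to normalize $\xi^+$ within its isotopy class so as to secure a pointwise bound $P^2<4Af$, whereupon the whole straight path (indeed the entire circle of planes through the axis $\xi^+\cap\beta$) consists of positive contact structures. This is where genuine Anosovity, rather than mere projective Anosovity, must enter: I would exploit the freedom to replace $\gamma^+$ by $(X^\tau)_\ast\gamma^+$ and by positive rescalings, and use the domination inequality of the splitting $E^{ss}\oplus\R X\oplus E^{uu}$ in an adapted metric to make the contact margins $A$ and $f$ dominate $P$ uniformly. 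Should a uniform discriminant bound prove unattainable, the fallback is to replace the straight segment in $t$ by a path inside $\mathrm{span}(\gamma^+,\alpha)$ (equivalently a path of planes through the axis) that dodges the sign-changing locus of the quadratic; the content is then to show that the pointwise positive loci admit a common connecting path, again a quantitative consequence of domination. Once such a path of positive contact forms is in hand, Gray stability produces an ambient isotopy carrying $\xi^+$ to $\beta$, completing the proof.
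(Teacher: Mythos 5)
First, a point of order: the paper itself contains \emph{no proof} of this proposition. It is quoted as a result of Bowden--Massoni from work in preparation and stated without argument, so there is no proof in the paper to compare yours against; I can only assess your proposal on its own terms.

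Your setup and algebra are essentially sound. The key identity $\gamma^+\wedge d\alpha=0$ is correct, and in fact follows purely smoothly from $i_Xd\alpha=0$ together with $\gamma^+(X)=0$ (contract the $3$-form $\gamma^+\wedge d\alpha$ with $X$). This is worth rephrasing that way, because the coframe $(\alpha,\alpha_s,\alpha_u)$ you start from does not literally exist in the smooth category: for a general Anosov flow $E^{ss}_X$ and $E^{uu}_X$ are only H\"older continuous, so $d\alpha_s$, $d\alpha_u$ and ``$d\alpha=f\,\alpha_s\wedge\alpha_u$'' need care; fortunately everything you actually use ($f>0$, $\gamma^+\wedge d\alpha=0$, the quadratic expansion of $\gamma_t\wedge d\gamma_t$, the Stokes argument giving $\int_M P\,\Omega=0$, and the final appeal to Gray stability) survives in a smooth formulation. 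So the reduction to the pointwise discriminant condition $P>-2\sqrt{Af}$ is a correct reduction.

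The genuine gap is the step you yourself label ``the heart of the argument'': the discriminant bound is never established, and your principal proposed mechanism provably cannot produce it. Since $X$ is the Reeb field of $\alpha$, one has $L_X\alpha=0$, so the volume $\Omega'=\alpha\wedge d\alpha$ is flow-invariant; measuring $A$, $P$, $f$ against $\Omega'$, the replacement $\gamma^+\mapsto (X^{\tau})_\ast\gamma^+=(X^{-\tau})^\ast\gamma^+$ sends $(A,P,f)$ to $(A\circ X^{-\tau},\,P\circ X^{-\tau},\,f)$: the bad locus $\{P\le -2\sqrt{Af}\}$ is merely transported by the flow, never shrunk, so the straight path works after flowing if and only if it worked at $\tau=0$. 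Constant rescalings are equally useless, since $\gamma^+\mapsto h\gamma^+$ with $h$ constant sends $(A,P,f)$ to $(h^2A,hP,f)$ and leaves the sign of $P^2-4Af$ unchanged. The only genuine freedom is a non-constant conformal factor: writing $u=\log h$ and letting $v$ span the axis $\xi^+\cap\beta$, the condition for $h\gamma^+$ becomes the differential inequality $v\cdot u < P+2\sqrt{Af}$, whose solvability is governed (by the standard averaging criterion) by the requirement that $\int (P+2\sqrt{Af})\,d\mu>0$ for every invariant probability measure $\mu$ of the line field spanned by $v$. Nothing in your proposal addresses this, and nothing obvious about Anosovity of $X$ guarantees it --- note that your own observation $\int_M P\,\Omega=0$ concerns the wrong measure, so it gives no leverage here. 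Your ``fallback'' (a non-straight path of planes through the axis) faces exactly the same unresolved quantitative problem. As it stands, then, this is a plausible plan whose central difficulty is explicitly deferred rather than a proof; whether the interpolation strategy can be pushed through at all remains open.
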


%With their permission, we outline their argument.
%\begin{proof}[Proof sketch]
%First, one can make an arbitrarily small deformation of $\beta$, and obtain an isotopic contact structure $\beta'$ which is transverse to the strong stable distribution of $X$. Then,  $X^t_\ast\beta'$ will tend to the weak unstable distribution of $X$ as $t\to +\infty$. In particular, $X^t_\ast\beta'$ can be made arbitrarily $C^0$-close to the weak unstable foliation $\cF^u_x$. Similarly, $X^t_\ast\eta^+$ gets arbitrarily $C^0$-close to the weak unstable foliation $\cF^u_x$ for $t$ large enough.
%Now, Vogel \cite{Vogel} showed that the isotopy class of positive contact structures that are sufficiently close to a foliation is unique provided the foliation satisfy some conditions. All the topological conditions required by Vogel's result are satisfied by unstable foliations of Anosov flows. The smoothness condition unfortunately is not (the stable and unstable foliations of Reeb Anosov flows are only $C^{1+\text{H\"older}}$, and Vogel's result is stated fro $C^2$ foliations). However, in their upcoming work Bowden and Massoni prove that Vogel's result still holds when the foliation is the (un)stable foliation of an Anosov flow.
%Thus, $X^t_\ast\eta^+$ and $X^t_\ast\beta'$ are isotopic for $t$ large enough, and thus $\xi^+$ and $\beta$ are isotopic.
%\end{proof} 

Given Proposition \ref{prop_Reeb_flows_same_free_hom_data}, an interesting consequence of Proposition \ref{prop_Anosov_contact_implies_Anosov_supporting}, is that the free homotopy data of a Reeb-Anosov flow of an Anosov contact structure is also equal to that of a \emph{tangential} Anosov flow:
\begin{corollary}
Let $\beta$ be an Anosov contact structure and $Y$ any Anosov flow supported by a bicontact pair $(\beta,\xi^-)$. Then for any nondegenerate Reeb flow $R$ of $\beta$, $\cP(R) \supset \cP(Y)$ with equality at least when $R$ is Anosov.
\end{corollary}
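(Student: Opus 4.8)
The plan is to combine the two preceding propositions, Proposition \ref{prop_Anosov_contact_implies_Anosov_supporting} and Proposition \ref{prop_Reeb_flows_same_free_hom_data}, with the elementary fact that an isotopic equivalence of flows preserves free homotopy data. Since $(\beta,\xi^-)$ is a bicontact pair, $\beta$ is the positive factor, so $\beta$ is a positive Anosov contact structure and hence admits a Reeb-Anosov flow $X$. Proposition \ref{prop_Anosov_contact_implies_Anosov_supporting} identifies the Anosov flow $Y$ supported by $(\beta,\xi^-)$ as being isotopically equivalent to $X$; this is the one piece of genuine geometric input, and I would simply cite it.

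The first reduction is that isotopic equivalence implies equality of free homotopy data. If $h\colon M\to M$ is a homeomorphism homotopic to the identity carrying orbits of $X$ to orbits of $Y$, then it sends periodic orbits to periodic orbits, and, being homotopic to the identity, acts trivially on conjugacy classes of $\pi_1(M)$; hence each periodic orbit keeps its free homotopy class and $\cP(Y)=\cP(X)$. It is precisely here that one needs isotopic, and not merely orbit, equivalence: a general orbit equivalence could act nontrivially on $[\pi_1(M)]$ and only yield $\cP(Y)$ as the image of $\cP(X)$ under an automorphism. Now $X$ is a Reeb-Anosov flow of $\beta$, so Proposition \ref{prop_Reeb_flows_same_free_hom_data}, applied with $R_i=X$, gives $\cP(X)\subset\cP(R)$ for every nondegenerate Reeb flow $R$ of $\beta$. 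Combining the two facts yields $\cP(Y)=\cP(X)\subset\cP(R)$, which is the claimed inclusion.

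For the equality when $R$ is Anosov, I would use that an Anosov Reeb flow is automatically Reeb-Anosov and, its periodic orbits all being hyperbolic, nondegenerate. Thus $X$ and $R$ are two Reeb-Anosov flows of the same Anosov contact structure $\beta$, and the first assertion of Proposition \ref{prop_Reeb_flows_same_free_hom_data} gives $\cP(R)=\cP(X)$; together with $\cP(X)=\cP(Y)$ this produces $\cP(R)=\cP(Y)$. The statement is therefore a formal consequence of the cited results, and the only real obstacle is the bookkeeping point noted above: ensuring that $Y$ and $X$ are linked by an isotopic (not merely orbit) equivalence, which is exactly the content of Proposition \ref{prop_Anosov_contact_implies_Anosov_supporting}.
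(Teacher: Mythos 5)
Your proposal is correct and takes essentially the same route as the paper, which presents this corollary as an immediate combination of Proposition \ref{prop_Anosov_contact_implies_Anosov_supporting} (identifying $Y$ up to isotopic equivalence with a Reeb-Anosov flow $X$ of $\beta$, hence $\cP(Y)=\cP(X)$ since a homeomorphism homotopic to the identity acts trivially on conjugacy classes) and Proposition \ref{prop_Reeb_flows_same_free_hom_data} (giving $\cP(X)\subset\cP(R)$ for nondegenerate $R$, with equality when $R$ is Reeb-Anosov). Your explicit flagging of the isotopic-versus-orbit equivalence distinction is precisely the point the paper relies on implicitly, so nothing further is needed.
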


Theorem \ref{thm_bitransverse_Anosov_imply_skew} together with Proposition \ref{prop_Anosov_contact_implies_Anosov_supporting}, suggest the following conjectural trichotomy for bicontact pairs:

\begin{conjecture}\label{conj_trichotomy}
Let $(\xi^+,\xi^-)$ be a bicontact structure supporting an Anosov flow $X$. Then, we have one of the following exclusive possibilities:
\begin{enumerate}
\item $\xi^+$ is Anosov contact, it admits a Reeb Anosov flow $R^+$ contained in $\xi^-$ and $X$ is a positively skew Anosov flow isotopically equivalent to $R^+$;
\item $\xi^-$ is Anosov contact, it admits a Reeb Anosov flow $R^-$ contained in $\xi^+$ and $X$ is a negatively skew Anosov flow isotopically equivalent to $R^-$;
\item Neither $\xi^+$ nor $\xi^-$ is Anosov contact and either $X$ is a suspension Anosov flow or is not $\bR$-covered.
\end{enumerate}
\end{conjecture}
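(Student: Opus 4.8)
The plan is to organize the trichotomy around the orbit space $\orb_X$. Since $X$ is a transversely orientable Anosov flow, the Barbot--Fenley trichotomy says that $\orb_X$ is trivial (so $X$ is a suspension), skew, or else not $\bR$-covered (both leaf spaces non-Hausdorff); in the skew case transverse orientability lets us distinguish positively from negatively skew. Thus $X$ lies in exactly one of the four classes: positively skew, negatively skew, suspension, or non-$\bR$-covered. The three cases of the conjecture are exactly the regrouping into (1) positively skew, (2) negatively skew, (3) suspension or non-$\bR$-covered, so the three possibilities are automatically mutually exclusive. It remains to match each orbit-space type with the stated contact conclusion.

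The easy half of this matching --- which already gives Case (3) and shows Cases (1),(2) cannot overlap with it --- runs through Proposition~\ref{prop_Anosov_contact_implies_Anosov_supporting}. Suppose $\xi^+$ is Anosov contact, with Reeb-Anosov flow $R^+$. As $\xi^+$ is positive, that proposition (with $\beta=\xi^+$) guarantees that, up to orbit equivalence, $R^+$ is the \emph{unique} Anosov flow supported by $\xi^+$; since $(\xi^+,\xi^-)$ supports $X$, we get $X$ orbit equivalent to $R^+$. A Reeb-Anosov flow is skew (Barbot~\cite{Bar01}), and positively skew because $\xi^+$ is positive (Theorem~\ref{thm_martyequivalence}); tracking orientations, $X$ is positively skew. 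The mirror argument shows $\xi^-$ Anosov contact forces $X$ negatively skew. Hence if $X$ is a suspension or non-$\bR$-covered, neither $\xi^+$ nor $\xi^-$ is Anosov contact, which is Case (3).

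The substance of the conjecture is the converse feeding Cases (1) and (2): a positively skew $X$ must force $\xi^+$ to be Anosov contact. I see two routes. In Route~A one applies Marty's Theorem~\ref{thm_martyequivalence} to obtain \emph{some} positive contact structure $\eta$ with a Reeb-Anosov flow $R_\eta$ isotopically equivalent to $X$, and then identifies $\eta$ with $\xi^+$: transporting the supporting pair $(\xi^+,\xi^-)$ across the isotopic equivalence $X\simeq R_\eta$ makes it support $R_\eta$, whereupon the Bowden--Massoni proposition yields $\eta\simeq\xi^+$. In Route~B one takes the bitransverse Reeb flow $R^+$ of $\xi^+$ produced by Theorem~\ref{thm_Hozoori}(3) and tries to prove that, when $X$ is skew, this $R^+$ is itself Anosov; Theorem~\ref{thm_bitransverse_Anosov_imply_skew} then closes the loop and delivers $X\simeq R^+$.

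I expect this converse to be the main obstacle; it is why the statement is only conjectural. In Route~A the difficulty is that an isotopic equivalence is merely a homeomorphism, so its pushforward of $(\xi^+,\xi^-)$ is only a continuous plane field and need not be a smooth bicontact structure; one must smooth it, or build a genuine isotopy of contact structures realizing $\eta\simeq\xi^+$. In Route~B the difficulty is that the available hyperbolic data, the distributions $E^{cs}_X$ and $E^{cu}_X$, are invariant under $X$ but not under $R^+$; promoting the dominated behaviour of $R^+$ to a true Anosov splitting would require building $R^+$-invariant strong bundles by a cone-field argument, using the global skew structure to exclude the suspension and non-$\bR$-covered alternatives. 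Once $\xi^+$ is known to be Anosov contact, the remaining assertions of Case~(1) --- that the Reeb-Anosov flow may be taken inside $\xi^-$ and that $X\simeq R^+$ --- should follow by combining Theorem~\ref{thm_Hozoori}, Proposition~\ref{prop_bitransverse_implies_adapted} and Corollary~\ref{cor_Hozoori} with the uniqueness clause of Proposition~\ref{prop_Anosov_contact_implies_Anosov_supporting}.
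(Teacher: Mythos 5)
This statement is a \emph{conjecture}: the paper contains no proof of it, only the remark (immediately after the statement) that proving it would require removing the bitransversality hypothesis from Theorem~\ref{thm_bitransverse_Anosov_imply_skew}. So there is no proof of the paper's to compare yours against, and, to your credit, you do not claim to have one: you correctly isolate the converse direction (``$X$ positively skew forces $\xi^+$ to be Anosov contact'') as the open core, and your Route~B --- showing that the bitransverse Reeb flow of $\xi^+$ supplied by Theorem~\ref{thm_Hozoori} is itself Anosov when $X$ is skew --- is essentially the strengthening the paper says is missing. Your stated obstacle for Route~A is also genuine: an isotopic equivalence is only a homeomorphism, so the pushed-forward pair is a pair of $C^0$ plane fields to which the Bowden--Massoni proposition cannot be applied. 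On this diagnostic level your proposal agrees with the paper's own discussion.

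However, the half you present as settled claims more than your argument delivers, and the shortfall funnels back to the very same gap. First, your appeal to the uniqueness clause of Proposition~\ref{prop_Anosov_contact_implies_Anosov_supporting} requires its strong reading: $X$ is supported by the \emph{given} pair $(\xi^+,\xi^-)$, not by a pair arising from the proposition's construction (which needs a pair supporting $R^+$, obtained via Mitsumatsu/Eliashberg--Thurston --- a step you omit). If one tries to justify that strong reading by re-running the proposition's Fenley argument with $X$ in place of $Y$, one needs $X$, which lies in $\xi^+=E^{ss}_{R^+}\oplus E^{uu}_{R^+}$, to avoid the strong bundles of $R^+$ pointwise, i.e.\ to be transverse to the weak foliations of $R^+$ --- exactly the bitransversality-type hypothesis that is unavailable. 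Second, even granting that clause, it yields only an \emph{orbit equivalence} between $X$ and $R^+$. As the paper notes, only orientation-preserving equivalences distinguish positively from negatively skew flows, so you may conclude that $X$ is skew --- enough for Case~(3) and for exclusivity --- but not that it is \emph{positively} skew, nor that it is \emph{isotopically} equivalent to $R^+$; obtaining those conclusions is precisely what the bitransversality hypothesis buys in Theorem~\ref{thm_bitransverse_Anosov_imply_skew}, and ``tracking orientations'' cannot be substantiated with the cited tools. Third, Case~(1) also asserts that $\xi^+$ admits a Reeb-Anosov flow \emph{contained in} $\xi^-$; none of Theorem~\ref{thm_Hozoori}, Proposition~\ref{prop_bitransverse_implies_adapted} or Corollary~\ref{cor_Hozoori} produces a Reeb field of $\xi^+$ lying in $\xi^-$ (Corollary~\ref{cor_Hozoori} runs in the opposite direction), so this clause is untouched. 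In short: your reduction of the conjecture and your identification of the open converse are sound and consistent with the paper, but the forward direction of Cases~(1) and~(2) is also only partially established by your argument, for reasons that are themselves instances of the missing bitransversality-free statement.
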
 

Recall that Marty proved that an Anosov flow is positively skew if and only if it is istopically equivalent to a Reeb Anosov flow (Theorem \ref{thm_martyequivalence}), the above conjecture gives a more precise version of where such a Reeb-Anosov flow should be found.
In order to prove this conjecture, one needs to be able to remove the condition of bitransversality in Theorem \ref{thm_bitransverse_Anosov_imply_skew}.

The above conjecture so far leaves unsaid what happens to the Reeb flows of, say, $\xi^-$ when $\xi^+$ is Anosov contact. Based on very limited data\footnote{Prettiness of the purported result being one important factor.}, we further conjecture:

\begin{conjecture}\label{conj_skew_and_regulating}
Let $(\xi^+,\xi^-)$ be a bicontact structure supporting an Anosov flow $X$. The flow $X$ is positively skew if and only if 
\begin{itemize}
\item There exists a bitransverse Reeb flow $R^+$ of $\xi^+$ that is Anosov, and
\item There exists a bitransverse Reeb flow $R^-$ of $\xi^-$ that is regulating for both stable and unstable foliations of $X$.
\end{itemize}
\end{conjecture}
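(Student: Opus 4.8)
The plan is to treat the two implications separately, as they are of very different natures. \emph{The backward implication} is in fact immediate from Theorem \ref{thm_bitransverse_Anosov_imply_skew}: the first bullet supplies a bitransverse Reeb flow $R^+$ of $\xi^+$ that is Anosov, and that theorem then forces $X$ to be positively skew (indeed isotopically equivalent to $R^+$). Note that the regulating flow $R^-$ of the second bullet is not used here at all; only the existence of the bitransverse Anosov Reeb flow $R^+$ is needed.

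\emph{The forward implication} is where all the work lies, and I would split it along the two asserted existence statements. For the first bullet, assume $X$ is positively skew; one must produce a bitransverse Anosov Reeb flow of $\xi^+$. By Hozoori's Theorem \ref{thm_Hozoori} bitransverse Reeb flows of $\xi^+$ already exist, so the entire difficulty is to arrange that such an $R^+$ can be taken Anosov, equivalently, that the Anosov-supporting structure $\xi^+$ is in fact Anosov contact. This is precisely case (1) of the trichotomy Conjecture \ref{conj_trichotomy}: by Marty's Theorem \ref{thm_martyequivalence}, a positively skew $X$ is isotopically equivalent to a Reeb-Anosov flow $R_0$ of some positive contact structure $\beta_0$, and by Proposition \ref{prop_Anosov_contact_implies_Anosov_supporting} that $\beta_0$ is Anosov-supporting. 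The plan is then to identify $\beta_0$ with $\xi^+$ up to isotopy, using an open-book/Giroux argument in the spirit of the reverse direction of Theorem \ref{thm_BMB} together with the Bowden--Massoni statement that an Anosov contact structure is isotopic to the positive member of any supporting bicontact pair, and to transport the Reeb-Anosov flow across that isotopy. As emphasised in the text, the crux is to remove the bitransversality hypothesis from Theorem \ref{thm_bitransverse_Anosov_imply_skew}, so this step is at least as hard as the trichotomy conjecture itself.

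For the second bullet I would work in the picture just established, so that $\cF^s_X$ and $\cF^u_X$ are $\bR$-covered and agree, up to blow-up, with the weak foliations of a Reeb-Anosov flow isotopically equivalent to $X$. Hozoori's theorem again provides a bitransverse Reeb flow $R^-$ of $\xi^-$; since $R^-$ is transverse to $\xi^-$ it is transverse to $X$, and being bitransverse it is transverse to both $E^{cs}_X$ and $E^{cu}_X$, hence to both weak foliations. Projecting a lifted orbit of $\wt{R^-}$ to the skew orbit space $\cO_X \simeq \{(x,y)\mid x<y<x+1\}$, with $\wt\cF^s_X$ and $\wt\cF^u_X$ the vertical and horizontal foliations, yields a curve strictly monotone in both coordinates. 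Regulating for $\cF^s_X$ then amounts to its $x$-coordinate exhausting $\bR$, and because of the diagonal constraint $x<y<x+1$ one checks that this holds for the stable foliation if and only if it holds for the unstable one, so only one foliation need be treated. The remaining task is to rule out a $\wt{R^-}$-orbit whose projection asymptotes to a single leaf (equivalently, to an $X$-orbit) without exhausting the leaf space. For transverse \emph{Anosov} flows this is exactly the regulating-or-blow-up dichotomy of Fenley's Theorem \ref{thm_fenley_transverse_to_R-covered}; since $R^-$ need not be Anosov, the plan is either to establish the analogous dichotomy for bitransverse Reeb flows, or to exclude trapped orbits directly from the contact (volume-preserving) nature of $R^-$.

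The hard part is the first bullet of the forward implication: showing that a positively skew $X$ forces $\xi^+$ to be Anosov contact. This is genuinely open, being equivalent to the trichotomy Conjecture \ref{conj_trichotomy}, and as noted it hinges on removing the bitransversality assumption in Theorem \ref{thm_bitransverse_Anosov_imply_skew}. By comparison the second bullet is a more technical statement about transverse flows on an $\bR$-covered orbit space, but it too requires pushing Fenley's dichotomy beyond the Anosov transverse setting.
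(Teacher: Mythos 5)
This statement is a \emph{conjecture} in the paper, and the paper offers no proof of it --- only evidence: it is verified for geodesic flows, can be checked for the Handel--Thurston examples via Salmoiraghi's bicontact surgeries, and is supported by the heuristic of projecting orbits of a bitransverse Reeb flow $R^-$ to the orbit space of $X$ (or by the slithering picture). So there is no proof to compare yours against, and your proposal, which honestly concludes that the forward implication is open, is essentially the correct assessment of the state of the art. Your backward implication is correct and complete: it follows from Theorem \ref{thm_bitransverse_Anosov_imply_skew} using only the first bullet, exactly as you say. Your plan for the second bullet of the forward direction (monotone projected curves in the skew strip, the observation that the diagonal constraint $x<y<x+1$ makes regulating for $\cF^s_X$ equivalent to regulating for $\cF^u_X$, and the need to exclude trapped orbits) matches the paper's own heuristic, and you correctly identify the obstruction: Fenley's dichotomy in Theorem \ref{thm_fenley_transverse_to_R-covered} applies to \emph{Anosov} flows transverse to a foliation, and no analogue is known for general bitransverse Reeb flows.

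Two corrections to your framing of the forward direction's first bullet. First, the logical relation to Conjecture \ref{conj_trichotomy} is backwards: that bullet is a \emph{consequence} of the trichotomy conjecture (case (1) applied to a positively skew $X$), so it is at \emph{most} as hard as the trichotomy, not ``at least as hard''; what the paper says is that proving the trichotomy requires removing bitransversality from Theorem \ref{thm_bitransverse_Anosov_imply_skew}. Second, your proposed route --- Marty's theorem produces $(\beta_0, R_0)$ with $R_0$ isotopically equivalent to $X$, then identify $\beta_0$ with $\xi^+$ --- has a genuine gap beyond the open conjectures you cite: an isotopy equivalence is only a homeomorphism, so one cannot transport the contact structure $\beta_0$, let alone the smooth Reeb-Anosov flow $R_0$, across it; and the Bowden--Massoni proposition applies to a bicontact pair supporting the Anosov \emph{Reeb} flow itself, whereas your pair $(\xi^+,\xi^-)$ supports $X$, which is a priori only topologically conjugate to $R_0$. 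Bridging that smoothness/regularity gap is precisely the hard content of the conjecture, so your plan, while a reasonable sketch of what a proof would have to do, does not reduce the problem to anything currently known.
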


If this conjecture holds, then in particular, we get that $\cP(R^+) = \cP(X)$ (by Theorem \ref{thm_bitransverse_Anosov_imply_skew}) and $\cP(R^-) \cap \cP(X) = \emptyset$ (because an element of $\pi_1(M)$ that represents any periodic orbit of a regulating flow will act as a translation on the leaf space, so cannot represent a periodic orbit of $X$).

Note that Conjecture \ref{conj_skew_and_regulating} is at least true in the case of the geodesic flow. Using Salmoiraghi's bicontact surgeries (\cite{Sal23,Sal24}), one can also show by hand that this holds for instance for the Handel--Thurston Anosov flow. More generally, in the setting of the conjecture one can projects the orbits of a bitransverse Reeb flow $R^-$ to the orbit space of $X$ to see that it appears that the flow should be regulating. Alternatively, one may look at the \emph{slithering} picture of skew Anosov flows (see \cite{Thurston:3MFC}), and try to understand what orbits of $\wt R^-$ do.

The case of non $\bR$-covered Anosov flow and its relation to the Reeb flows of bicontact structure is so far much more mysterious to me. For instance:
\begin{question}
Let $X$ be a non $\bR$-covered Anosov flow supported by $(\xi^+,\xi^-)$ and let $R^\pm$ be a bitransverse Reeb flows of $\xi^\pm$. Can one have $\cP(R^\pm) = \cP(X)$ or $\cP(R^\pm) \cap \cP(X) = \emptyset$?
\end{question}
My guess is that, in the non-$\bR$ covered case, the intersection of $\cP(R^\pm)$ and $\cP(X)$ should always be proper and, at least generically, non-empty. 
To be a little more specific, I need to recall one more feature of the orbit space of pseudo-Anosov flows: A \emph{lozenge} $L$ is an open, trivially foliated region in $\cO$ whose boundary consists of four half-leaves of $\wt \cF^s(x)$, $\wt \cF^u(x)$, $\wt \cF^s(y)$ and $\wt \cF^u(y)$ such that $\wt \cF^s(x)\cap \wt \cF^u(y) = \emptyset$ and $\wt \cF^s(y)\cap \wt \cF^u(x) = \emptyset$. The points $x$ and $y$ are called the \emph{corners} of $L$ and the four boundary segments are called the \emph{sides}. See Figure \ref{fig_lozenge}, and, e.g., \cite{Potrie_notes} for more details.
When an Anosov flow is transversally orientable, and we fixed a choice of transverse orientations for both foliations, then one can distinguish two types of lozenges: A lozenge is \emph{positive} if the two sides of one corner are both positive half-leaves (and equivalently, the two sides of the other corner are both negative half leaves). Otherwise it is called \emph{negative}.

\begin{figure}[h]
	\labellist 
	\small\hair 2pt
	\pinlabel $x$ at 45 85 %130
	\pinlabel $y$ at 190 85
	\pinlabel $L$ at 110 100 
%	 \pinlabel $\xi^+$ at 100 20 
%\pinlabel $\beta$ at 10 20 
%  \pinlabel $X$ at 180 180 
%   \pinlabel $Y$ at 260 15 
	\endlabellist
	\centerline{ \mbox{
			\includegraphics[width=4cm]{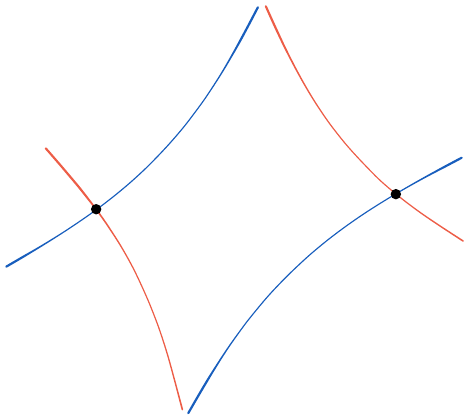}}}
	\caption{A lozenge and its corners} 
		\label{fig_lozenge} 
\end{figure}

I hope the following is true:
\begin{conjecture}
Let $X$ be an Anosov flow supported by $(\xi^+,\xi^-)$, then there exists bitransverse Reeb flows $R^+$ and $R^-$ such that, for any $[g]\in \cP(X)$, we have $[g]\in \cP(R^+)$ (resp.~$[g]\in \cP(R^-)$) if and only if $g$ fixes a positive (resp.~negative) lozenge. 
\end{conjecture}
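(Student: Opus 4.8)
The plan is to work entirely inside the orbit space $\cO_X\simeq\bR^2$ of $X$, with its bifoliation $(\wt\cF^s,\wt\cF^u)$ and the induced $\pi_1(M)$-action, and to convert the statement into a combinatorial statement about which lozenges an element $g$ can fix. Recall that $[g]\in\cP(X)$ exactly when $g$ (up to conjugacy and inverse) fixes a point $p\in\cO_X$, and that $p$ is then a hyperbolic fixed point of $g$ with local stable and unstable directions along $\wt\cF^s(p)$ and $\wt\cF^u(p)$. Having fixed transverse orientations, the four quadrants around $p$ are labelled by pairs of signs, and a lozenge with corner $p$ is positive precisely when it occupies the $(+,+)$ or $(-,-)$ quadrant and negative when it occupies a $(+,-)$ or $(-,+)$ quadrant. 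The whole point is to match these quadrants of $\cO_X$ with the quadrant of $TM\smallsetminus\{E^{cs},E^{cu}\}$ in which the designated Reeb field lives.

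First I would pin down the Reeb flows. By Proposition \ref{prop_bitransverse_implies_adapted} a bitransverse Reeb field is forced into one fixed component of $TM\smallsetminus\{E^{cs},E^{cu}\}$, and I would choose $R^+$ and $R^-$ so that their lifted orbits, after projecting to $\cO_X$, become curves transverse to both $\wt\cF^s$ and $\wt\cF^u$ that are monotone in the $(+,+)$ (resp.\ $(+,-)$) direction. Concretely, since $R^\pm$ is transverse to $E^{cs}\supset\bR\cdot X$, every orbit of $\wt R^\pm$ meets each $\wt X$-orbit at most once, hence projects injectively to an immersed curve $\eta$ in $\cO_X$ transverse to both foliations, and its designated quadrant dictates the constant direction of monotonicity of $\eta$. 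Here the freedom allowed by the conjecture (we need only produce \emph{some} $R^\pm$) is essential: I would lean on Hozoori's construction together with the description of $E^{cs},E^{cu}$ as limits of $(X^t)_\ast\xi^\pm$ to arrange orientations so that the quadrant of $R^+$ is of ``positive'' type $\{(+,+),(-,-)\}$ and that of $R^-$ of ``negative'' type $\{(+,-),(-,+)\}$; since $\xi^+$ and $\xi^-$ occupy opposite quadrants, $R^+$ and $R^-$ should land in complementary quadrant types.

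The central step is then the dictionary: for $[g]\in\cP(X)$, one has $[g]\in\cP(R^+)$ if and only if there is a $g$-invariant orbit of $\wt R^+$, which projects to a $g$-invariant monotone curve $\eta\subset\cO_X$ in the $(+,+)$ direction whose two ends limit onto fixed points of $g$. I would show that such an $\eta$ exists if and only if $g$ fixes a positive lozenge. For the ``if'' direction, given a $g$-fixed positive lozenge $L$ with corners $p,q$, I would take the diagonal of $L$ from $p$ (with both sides positive) to $q$ (with both sides negative), which is exactly a $(+,+)$-monotone transversal, propagate it $g$-equivariantly, and realize it as the projection of an $R^+$-orbit that closes up to a periodic orbit in the class $[g]$. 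For the ``only if'' direction I would argue that a monotone $(+,+)$-transversal whose ends limit onto fixed points of $g$ must sweep through a trivially foliated $g$-invariant region bounded by the half-leaves through those fixed points, that is, a positive lozenge. The flow $R^-$ is treated symmetrically with the opposite quadrant selecting negative lozenges, and the two conclusions are disjoint because a single curve cannot be monotone in both a $(+,+)$ and a $(+,-)$ direction.

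The hard part, I expect, is not the combinatorial dictionary but the realization problem for genuine Reeb flows: guaranteeing that the chosen $R^+$ actually has a periodic orbit in \emph{every} positive-lozenge class of $\cP(X)$ and in \emph{no} other class. Producing periodic Reeb orbits in prescribed free homotopy classes is precisely the kind of existence question that, as in Section \ref{sec_free_homotopy_data}, is governed by non-vanishing of cylindrical contact homology rather than by soft topology, and excluding spurious orbits is equally delicate. A plausible route is to first solve the analogous problem for a conveniently chosen bitransverse \emph{vector field} in the correct quadrant, where one has complete freedom and the dictionary above is clean, and then to argue that the intersection $\cP(R^+)\cap\cP(X)$ is insensitive to the passage from that model field to an honest Reeb field in the same quadrant, for instance via a continuation through bitransverse fields keeping $E^{cs},E^{cu}$ fixed. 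Making that insensitivity precise, especially in the non-$\bR$-covered case where $\cF^s_X$ is genuinely non-Hausdorff and the chains of lozenges fixed by $g$ can be complicated, is where I would expect the real difficulty to lie.
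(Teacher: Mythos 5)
First, a point of order: the statement you are proving is not a theorem of the paper — it is stated there as a conjecture, with no proof. The paper only offers a heuristic for one direction (if $g$ fixes a lozenge): by Barbot's theorem a $g$-invariant lozenge is the projection of a Birkhoff annulus, and one hopes to realize that annulus by flowing a corner orbit with a well-chosen Reeb flow of the opposite-sign contact structure, then placing a periodic orbit of the other Reeb flow in its core; the paper explicitly warns that doing this for \emph{all} lozenges simultaneously ``would require much more work.'' So your proposal is an independent attempt, and it should be judged on whether it closes the problem. It does not, for two concrete reasons.

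The central gap is that your orbit-space dictionary is false as stated. You claim that a $g$-invariant orbit of $\wt R^\pm$ projects to a monotone transversal ``whose two ends limit onto fixed points of $g$,'' and that such a curve must sweep out a lozenge of the corresponding sign. Neither holds. Already in the suspension case, where the bifoliated plane is trivial and there are \emph{no} lozenges at all, an element $g$ with a fixed point acts (up to conjugacy) as a hyperbolic linear map, and the hyperbola $xy=1$ is a $g$-invariant, properly embedded, monotone transversal whose ends escape to infinity along the invariant leaves rather than converging to the fixed point. Thus $g$-invariant monotone transversals of every quadrant type exist for every $[g]\in\cP(X)$, regardless of whether $g$ fixes any lozenge, and their existence cannot characterize anything. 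What is true is only the much weaker statement that a monotone transversal whose ends \emph{do} converge to fixed points spans a lozenge; you never establish that convergence, and in general it fails. Note also that your dictionary never uses the specific choice of $R^\pm$: if it were valid it would prove the conclusion for \emph{every} bitransverse Reeb flow, whereas the conjecture deliberately asks only for \emph{some} pair — and the paper's discussion of Colin--Honda--Laudenbach in Section \ref{sec_free_homotopy_data} shows that Reeb flows of a fixed contact structure can have wildly different free homotopy data, so a choice-independent argument cannot be correct.

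The second gap is the transfer step you defer to the end: proving the statement for a model bitransverse \emph{vector field} and then claiming $\cP(R^+)\cap\cP(X)$ is ``insensitive'' to a continuation through bitransverse fields toward an honest Reeb field. There is no mechanism supporting this: periodic orbits are created and destroyed under such deformations, and the only persistence tool in this setting, cylindrical contact homology, (i) yields existence of orbits in classes with non-vanishing homology but can never \emph{exclude} orbits, and (ii) its non-vanishing is only known for Reeb-Anosov flows (Macarini--Paternain, Vaugon); extending it to more general Reeb flows is precisely the open Question \ref{que_rmin}. So both the realization direction and, especially, the exclusion direction remain unproven, and what you set aside as ``the hard part'' is in fact the entire content of the conjecture — which is why it is stated as one.
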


One direction of the above conjecture could possibly be proved in the following way: If $g$ leaves invariant a lozenge, then this lozenge corresponds to the projection of a \emph{Birkhoff annulus}, i.e., an annulus transverse to the Anosov flow $X$ in its interior and bounded by two orbits of $X$. (This result is originally due to Barbot in \cite{Barbot_MPOT}, see also \cite{ABM} for an improved version of that result.) It seems likely that one can realize a Birkhoff annulus of a, say, positive lozenge, by flowing one of the corner orbits by an (appropriately chosen) Reeb flow of the \emph{negative} contact structure $\xi^-$. Then one can choose a Reeb flow of $\xi^+$ to have a periodic orbit in the middle of that annulus. Note that these type of well chosen Reeb flows for Birkhoff annuli is similar to the local coordinates chosen in the surgeries of Salmoiraghi \cite{Sal23,Sal24}, or the Foulon--Hasselblatt surgery \cite{FH13}.
Being able to choose one pair of Reeb flows that would realize these nice Birkhoff annuli for every lozenge at the same time however would require much more work.

 \section{Birkhoff sections and pseudo-Anosov models}\label{sec_pAmodels}

In this final section, we discuss the existence of pseudo-Anosov models (see Definition \ref{def_pAmodel}) for more general $3$-dimensional flows.

Morally, a way of building a pseudo-Anosov model to a $3$-dimensional flow $X^t$ is by using the return map to a Birkhoff section (when $X$ admits such a section), provided this return map is isotopic to a pseudo-Anosov representative. We now try to make this strategy more precise.

\begin{definition}\label{def_Birkhoff}
A \emph{Birkhoff section} for a vector field $X$ on $M$ is an immersion $\iota \colon S \to M$ defined on a compact surface $S$ satisfying:
\begin{enumerate}[label =(\roman*)]
\item If $\partial S\neq \emptyset$, then $\iota(\partial S)$ is a union of periodic orbits of $X$;
\item $\iota^{-1}\left(\iota(\partial S)\right) = \partial S$ and $\iota$ defines an embedding $S\smallsetminus \partial S \to M\smallsetminus \iota(\partial S)$ that is transverse to the flow;
\item for every $x\in M$, there exists $t_0<0<t_1$ such that $X^{t_i}(x)\in \iota (S)$, for $i=0,1$.
\end{enumerate}
If $\iota$ is an embedding on $S$, we say that the Birkhoff section is an \emph{embedded} Birkhoff section.
\end{definition}

Note that given a Birkhoff section $\iota\colon S \to M$ for $X$, one obtains homeomorphisms on $\iota(S)$ and on $S$, by considering the first return map of $X$ on the interior of $\iota(S)$. We call $f_{X;\iota(S)}$ this return map seen as a map on $\iota(S)$, and $f_{X;S}$ its counterpart on $S$.

Topologically, the manifold $M\smallsetminus \{\iota(\partial S)\}$ is the mapping torus of $f_{X;S}$ when restricted to the interior of $S$. Moreover, by construction, $X$ restricted to $M\smallsetminus \{\iota(\partial S)\}$ is orbit equivalent to the suspension of $f_{X;S}$, and this orbit equivalence can be extended to $M$ by adding the periodic orbits corresponding to $\iota(\partial S)$.

Not all three manifold flows admit Birkhoff sections. For instance, a pseudo-Anosov flows admits a Birkhoff section if and only if it is transitive \cite{Fri83}. For Reeb flows, however, a recent result of Colin--Dehornoy--Hryniewicz--Rechtman \cite{CDHR24} (see also Contreras--Mazzucchelli \cite{CM22}) shows that admitting a Birkhoff section is a generic property:
\begin{theorem}[\cite{CDHR24}, Corollary 1.2]
The set of contact $1$-forms whose Reeb flows admit Birkhoff sections is open and dense in the $C^{\infty}$-topology.
\end{theorem}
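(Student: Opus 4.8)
The plan is to prove openness and density separately, reducing the density half to a genericity statement about the Reeb dynamics and then feeding that into the \emph{broken book} machinery.

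For openness, I would argue that admitting a Birkhoff section is stable under $C^\infty$-perturbation as soon as its binding orbits are nondegenerate. Given a Birkhoff section $\iota\colon S\to M$ for the Reeb flow of $\alpha$ whose boundary $\iota(\partial S)$ consists of nondegenerate periodic orbits, a small perturbation $\alpha'$ of $\alpha$ has nearby nondegenerate periodic orbits (persistence of nondegenerate/hyperbolic orbits), the interior of $\iota(S)$ stays transverse to $R_{\alpha'}$, and the uniform bound on forward and backward return times in condition (iii) of Definition \ref{def_Birkhoff} is an open condition by compactness of $M$. A slight isotopy of $S$ reattaching its boundary to the perturbed orbits then produces a Birkhoff section for $\alpha'$. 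So the set of forms admitting a Birkhoff section \emph{with nondegenerate binding} is $C^\infty$-open, and one checks that the density construction below always outputs such a binding.

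For density, the point is that it suffices to produce a Birkhoff section for a $C^\infty$-dense class of forms. I would first approximate an arbitrary $\alpha$ by a \emph{Kupka--Smale} Reeb flow: by standard transversality arguments this is $C^\infty$-generic, so one may assume every periodic orbit is nondegenerate and that the stable and unstable manifolds of the periodic orbits meet transversally. It then remains to show such a flow admits a Birkhoff section, which is essentially the theorem of Contreras--Mazzucchelli \cite{CM22} exploited in \cite{CDHR24}. The mechanism is the broken book decomposition: every nondegenerate contact form is carried by a broken book whose binding is a \emph{finite} collection of periodic orbits and whose pages are transverse to $R_\alpha$. The pages already satisfy transversality and have periodic boundary, but a priori they spiral onto the broken binding orbits and need not meet every flowline, so they give only a partial section. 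Using the Kupka--Smale hypothesis, the transverse intersections of the stable and unstable manifolds of the broken binding orbits furnish homoclinic/heteroclinic rectangles along which adjacent pages can be glued, capping off the spiralling and yielding a compact immersed surface whose boundary is exactly the finite binding.

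The hard part will be verifying condition (iii) of Definition \ref{def_Birkhoff} for the patched surface, i.e.\ that \emph{every} orbit crosses it in bounded forward and backward time. The main obstacle is this global return-time control: one must rule out orbits that linger arbitrarily long away from the constructed surface, which demands a careful analysis of the dynamics near the broken binding and of the chain-recurrent structure, combining the broken-book ``filling'' property with Kupka--Smale transversality (and, if an invariant gap threatens to survive, a $C^\infty$ closing-lemma input guaranteeing that periodic orbits are dense enough to obstruct it). Making the gluing uniform, so that the result is genuinely a Birkhoff section rather than merely a transverse surface missing a recurrent remnant, is where essentially all the work of \cite{CDHR24,CM22} concentrates.
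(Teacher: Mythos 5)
This statement is not proved in the paper at all: it is quoted verbatim from Colin--Dehornoy--Hryniewicz--Rechtman \cite{CDHR24} (with Contreras--Mazzucchelli \cite{CM22} noted as an alternative source), so there is no internal proof to compare against, and your proposal has to be measured against the cited source. Measured that way, your outline reproduces the architecture of the actual proof correctly. Density in \cite{CDHR24} is exactly your route: strong nondegeneracy (all periodic orbits nondegenerate and all stable/unstable manifolds of hyperbolic orbits meeting transversally, i.e.\ the Kupka--Smale condition) is $C^\infty$-generic, and every strongly nondegenerate contact form admits a Birkhoff section, obtained by starting from a supporting broken book decomposition and using transverse homoclinic intersections at the broken binding orbits to repair the pages into a genuine section. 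Openness is their stability theorem for what they call $\partial$-strong Birkhoff sections --- those whose binding orbits are all nondegenerate --- and the density construction automatically outputs such sections, since every periodic orbit of a nondegenerate form is nondegenerate; this matches your criterion.

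Two caveats. First, your sketch is an outline rather than a proof: the two genuinely hard inputs --- the existence of supporting broken books, which rests on embedded contact homology and holomorphic-curve theory rather than on any elementary transversality argument, and the gluing of pages near the broken binding --- are black-boxed, as you yourself acknowledge; for a statement that is itself a citation this is acceptable. Second, the one step you present as easy that is not: openness does not follow ``by compactness of $M$''. Transversality of the section to the Reeb field is not uniform up to the boundary (the closure of the immersed surface is tangent to the flow along $\iota(\partial S)$, since the boundary consists of orbits), and the return-time bound in condition (iii) of Definition \ref{def_Birkhoff} likewise degenerates there, so no compactness argument controls a neighborhood of the binding. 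The persistence argument in \cite{CDHR24} is a local analysis near each nondegenerate binding orbit in the blown-up picture --- irrational rotation number relative to the page framing in the elliptic case, invariant quadrants between the separatrices in the hyperbolic case --- combined with compactness only on the part of the section at definite distance from its boundary. Your hypothesis (nondegenerate binding) is the right one, but your justification hides essentially all the content of the openness half.
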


If a flow $X$ admits a Birkhoff section, one can then use the Nielsen--Thurston classification of surface diffeomorphisms to build a model flow by choosing a different representative in the isotopy class of the return map $f_{X;S}$. 
One minor technical issue appears when the Birkhoff section is not embedded, as $f_{X;S}$ may then permute the boundary components of $S$, so for simplicity, we will now restrict to flows admitting embedded Birkhoff sections\footnote{In a very recent work \cite{CHR25}, Colin, Hryniewicz and Rechtman show that Reeb flows admitting embedded Birkhoff sections is also $C^{\infty}$-generic, so our assumption is generically verified.}. 

Let $\Pmod(S)$ be the pure mapping class group of $S$, i.e., the space of diffeomorphisms of $S$ preserving each boundary components, up to isotopy. We write $\{f\}\in \Pmod(S)$ for the isotopy class of a diffeomorphism $f$.

\begin{proposition}[Handel]
Let $X$ be a flow on $M$ admitting an embedded Birkhoff section $\iota \colon S \to M$. Suppose that $\{f_{X;S}\}\in \Pmod(S)$ is pseudo-Anosov. Call $f$ a pseudo-Anosov representative in the isotopy class of $f_{X;S}$.

Then there exists a, possibly $1$-pronged, pseudo-Anosov flow $\bar X$ on $M$, such that $\iota \colon S \to M$ is a Birkhoff section of $\bar X$ with return map $f_{\bar X; S} = f$.

Moreover, $\cP(\bar X) \subset \cP(X)$ and there exists a compact, $X$-invariant, subset $K\subset M$ and a continuous and surjective map $h \colon K \to M$ that takes flowlines of $X|_K$ to flowlines of $\bar X$. 
\end{proposition}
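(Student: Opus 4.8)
The plan is to realize $\bar X$ as the suspension of the pseudo-Anosov representative $f$, reusing the identification of $M$ with a filled mapping torus that the Birkhoff section already supplies, and then to extract the required semiconjugacy from Handel's global shadowing theorem \cite{Han85}.

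First I would construct $\bar X$. Since $\iota$ is an embedded Birkhoff section, $M\smallsetminus \iota(\partial S)$ is the mapping torus of $g := f_{X;S}$ on $\mathrm{int}\,S$, and $X$ on this open manifold is orbit equivalent to the suspension of $g$, as recalled in the excerpt. Because $\{f\}=\{g\}$ in $\Pmod(S)$, the map $f$ is isotopic to $g$ through homeomorphisms fixing each boundary component, so an isotopy from $g$ to $f$ induces a homeomorphism of the two mapping tori carrying the suspension of $g$ to the suspension of $f$. The closed manifold $M$ is recovered from the mapping torus of $g$ by Dehn filling the boundary tori along the slopes prescribed by the Birkhoff section, so that the components of $\iota(\partial S)$ reappear as core periodic orbits; performing the same fillings on the mapping torus of $f$ yields a manifold homeomorphic to $M$ on which the suspension of $f$ extends. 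Away from the cores this extension is the suspension of a pseudo-Anosov map, and each core orbit is a $p$-prong orbit, where $p$ is the number of prongs of $f$ along the corresponding boundary circle; since $f$ may have $1$-pronged boundary behaviour, some cores may be $1$-prongs, which is exactly why $\bar X$ is only a (possibly $1$-pronged) pseudo-Anosov flow. By construction $\iota$ is a Birkhoff section of $\bar X$ with return map $f_{\bar X;S}=f$.

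Next I would produce the model map. Handel's theorem \cite{Han85}, applied in its relative-to-boundary form to the isotopic pair $(g,f)$, supplies a compact $g$-invariant set $Y\subseteq S$ together with a continuous surjection $\pi\colon Y\to S$ with $\pi\circ g=f\circ\pi$; I would arrange $Y$ to contain $\partial S$ with $\pi$ fixing each boundary component. Suspending, $Y$ sweeps out a compact invariant subset $\hat Y$ of the mapping torus of $g$, and the relation $\pi g=f\pi$ makes $\hat\pi[x,t]:=[\pi(x),t]$ a well-defined continuous surjection from $\hat Y$ onto the suspension of $f$ that carries suspension flowlines of $g$ to suspension flowlines of $f$. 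Transporting $\hat Y$ back through the orbit equivalence $M\smallsetminus\iota(\partial S)\simeq(\text{suspension of }g)$ and adjoining the boundary orbits $\iota(\partial S)$, I obtain a compact $X$-invariant set $K\subseteq M$; composing $\hat\pi$ with the filling identifications and extending it over the core orbits (the images of the suspended boundary orbits) gives the required continuous surjection $h\colon K\to M$ taking flowlines of $X|_K$ to flowlines of $\bar X$. For the inclusion $\cP(\bar X)\subseteq\cP(X)$ I would split the periodic orbits of $\bar X$ into two types: the core orbits, each freely homotopic in $M$ to a component of $\iota(\partial S)$, which is a periodic orbit of $X$; and the closed orbits coming from periodic orbits of $f$, each of which, by the global shadowing part of Handel's theorem, is freely homotopic to a periodic orbit of $g$ and hence, via the orbit equivalence above, to a periodic orbit of $X$. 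In both cases the free homotopy class of a $\bar X$-orbit is realized by an $X$-orbit, so $\cP(\bar X)\subseteq\cP(X)$.

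The main obstacle I expect is the interface between the surface-level statement and the flow on the closed manifold $M$: invoking Handel's theorem in the bounded-surface (relative) setting, checking that $h$ extends continuously across the capped core orbits while remaining surjective there, and verifying that the suspension genuinely satisfies the prong and backward-expansivity axioms of Definition \ref{def_topPA} (including the admissible $1$-prong orbits) after filling. The free-homotopy bookkeeping, matching the shadowing orbits of $g$ with periodic orbits of $X$ up to free homotopy, is the other delicate point, but it should follow from the standard persistence of essential periodic classes built into Handel's semiconjugacy, together with the observation that $\pi$, and hence $h$, is homotopic to the inclusion.
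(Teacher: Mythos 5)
Your construction of $\bar X$ (suspend the pseudo-Anosov representative $f$ over $M\smallsetminus\iota(\partial S)$ and fill the boundary periodic orbits back in, accepting possible $1$-prongs at the cores) and your construction of $K$ and $h$ (Handel's semiconjugacy $\pi\colon Y\to S$ with $\pi\circ g=f\circ\pi$, suspended, transported through the orbit equivalence, and extended over $\iota(\partial S)$) follow exactly the same route as the paper's proof; the paper likewise glosses over the continuity of the extension across the boundary orbits, and it sidesteps your relative-version worry by keeping Handel's set $C$ in the interior of $S$, with $h_S\colon C\to \mathring S$, and adjoining $\iota(\partial S)$ by hand afterwards.

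The genuine gap is in your argument for $\cP(\bar X)\subset\cP(X)$. You deduce it from ``the global shadowing part of Handel's theorem,'' i.e.\ from the semiconjugacy $\pi\circ g=f\circ\pi$. But a semiconjugacy by itself does not produce periodic orbits of $g$: if $x$ is a periodic point of $f$, all you get is that $\pi^{-1}(\mathrm{orbit}(x))$ is a nonempty compact $g$-invariant set, and a compact invariant set need not contain any periodic point (its return dynamics could, a priori, be a minimal set like an irrational circle rotation, every one of whose suspension flowlines maps onto the closed orbit of $\bar X$). So ``persistence of essential periodic classes'' is not ``built into'' the semiconjugacy; it is a separate theorem. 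The ingredient actually needed -- and the one the paper invokes, citing Thurston (via FLP and Farb--Margalit) -- is that pseudo-Anosov homeomorphisms minimize periodic points within their isotopy class: every Nielsen class of periodic points of $f$ has nonzero index, hence is realized by any $g=f_{X;S}$ isotopic to $f$, and Nielsen-equivalent periodic points of $f$ and $g$ have freely homotopic suspension orbits in $M$. Your bookkeeping for the core orbits (they are literally the components of $\iota(\partial S)$, periodic for both $X$ and $\bar X$) is fine. In short: the architecture of your proof is the paper's, but the free-homotopy step as written rests on a statement the semiconjugacy cannot supply; replacing that appeal by Thurston's periodic-point minimization (unremovability of pseudo-Anosov Nielsen classes) repairs it.
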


If $\bar X$ happens to be a true pseudo-Anosov flow, i.e., does not admit any $1$-prong singularities, then $\bar X$ is a pseudo-Anosov model for $X$ in the sense of Definition \ref{def_pAmodel}.

\begin{proof}[Sketch of proof]
As discussed above, one can take the suspension of $f$ to obtain a flow on $M\smallsetminus \{\iota(\partial S)\}$, and then adding $\iota(\partial S)$ as periodic orbits to obtain a flow $\bar X$. By construction, $\iota \colon S\to M$ is a Birkhoff section for $\bar X$, with $f_{\bar X;S}=f$ being a pseudo-Anosov diffeomorphism. Then $\bar X$ is a pseudo-Anosov flow on $M\smallsetminus \{\iota(\partial S)\}$, but the orbits given by $\iota(\partial S)$ may be singular, and in particular, they may be $1$-prong. Thus $\bar X$ is a possibly $1$-pronged pseudo-Anosov flow.

Now, Thurston proved that pseudo-Anosov diffeomorphisms minimizes the number of periodic points in its isotopy class (see, e.g., \cite{FLP79,FM_primer}), which implies that $\cP(\bar X)\subset \cP(X)$.

Moreover, Handel \cite[Theorem 2 and Remark 2.4]{Han85} proved that there exists a closed set $C$ in the interior of $S$ and a surjective map $h_S\colon C \to \mathring S$, homotopic to inclusion, such that $f\circ h_S = h_S \circ f_{X;S}|_C$. Flowing $C$ by $X$ and adding $\iota(\partial S)$ gives the set $K$, and similarly one can build the map $h$ from $h_S$.
\end{proof}

Note that when the manifold $M$ is hyperbolic, then the isotopy class of any return maps $f_{X;S}$ will have to admit a pseudo-Anosov representative. So we deduce:
\begin{corollary}
If $M$ is a hyperbolic $3$-manifold and $X$ is a flow admitting an embedded Birkhoff section, then $X$ admits a possibly $1$-pronged pseudo-Anosov model.
\end{corollary}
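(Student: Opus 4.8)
The plan is to reduce the statement to the single assertion that the return map $f_{X;S}$ is pseudo-Anosov, and then to feed this into the preceding Proposition of Handel, which manufactures the desired (possibly $1$-pronged) pseudo-Anosov model in the sense of Definition \ref{def_pAmodel}. Thus the entire content lies in the note preceding the statement: when $M$ is hyperbolic, the isotopy class $\{f_{X;S}\}\in\Pmod(S)$ must be pseudo-Anosov. First I would set up notation: write $L := \iota(\partial S)$ for the binding, a link of periodic orbits of $X$, and put $N := M \smallsetminus L$. As recorded just after Definition \ref{def_Birkhoff}, the Birkhoff section identifies $N$ with the mapping torus of $f_{X;S}$ restricted to $\mathring S$; in particular $N$ fibers over $S^1$ with fiber the punctured surface $\mathring S$ and monodromy isotopic to $f_{X;S}$, and the closed manifold $M$ is recovered from $N$ by Dehn filling each cusp along the meridian of the corresponding component of $L$.

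Next I would run the Nielsen--Thurston trichotomy on $\{f_{X;S}\}$ and translate each case into the geometry of the fibered manifold $N$, following Thurston: if $f_{X;S}$ is periodic then $N$ is Seifert fibered; if $f_{X;S}$ is reducible then $N$ contains an essential torus; and if $f_{X;S}$ is pseudo-Anosov then $N$ is hyperbolic. The periodic case I would exclude by a filling argument: every Dehn filling of a Seifert fibered space is again Seifert fibered or reducible (the fibration extends over the filling solid torus unless one fills along the fiber slope, which produces a connected sum), and neither option is hyperbolic. Since $M$ is itself a filling of $N$, this contradicts the hyperbolicity of $M$, so $f_{X;S}$ is not periodic.

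The hard part will be excluding the reducible case. Here I would use that $M$, being hyperbolic, is irreducible and atoroidal. By the standard position theorem for essential surfaces in fibered $3$-manifolds, an essential torus in $N$ may be isotoped to be vertical---it cannot be horizontal, since the fiber $\mathring S$ is punctured and hence not a closed torus---so it is the flow-invariant torus $T_c$ swept out by the orbits through an essential reducing curve $c\subset\mathring S$ with $f_{X;S}(c)$ isotopic to $c$. As $c$ lies in the interior of $S$, the torus $T_c$ is disjoint from $L$ and embeds in $M$. Since $M$ is atoroidal, $T_c$ compresses in $M$; as $T_c$ is incompressible in $N$, any compressing disk must meet $L$, so $T_c$ bounds a solid torus $V\subset M$ with $L\cap\mathring V\neq\emptyset$. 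If $L\cap V$ were a single core curve then $T_c=\partial V$ would be boundary-parallel in $N$, contradicting that $T_c$ is essential; hence $L\cap V$ is a nontrivial tangle, and the contradiction must come from the fact that $S$ is a \emph{global} section meeting every orbit of $X$, not from topology alone. This is precisely the delicate point where the Birkhoff property, rather than the mere fibering, is used.

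Once the reducible case is ruled out, $f_{X;S}$ is pseudo-Anosov, and Handel's Proposition above applies verbatim to produce a possibly $1$-pronged pseudo-Anosov flow $\bar X$ together with a compact $X$-invariant set $K$ and a continuous surjection $h\colon K\to M$ taking flowlines to flowlines; this is exactly a (possibly $1$-pronged) pseudo-Anosov model for $X$, as claimed. I expect the verification of the two easy cases to be routine given the filling classification, and I expect essentially all of the difficulty to be concentrated in the toroidal/reducible exclusion of the previous paragraph.
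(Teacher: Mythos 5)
Your strategy coincides with the paper's: the paper's entire proof consists of the assertion that hyperbolicity of $M$ forces $\{f_{X;S}\}\in\Pmod(S)$ to be pseudo-Anosov, followed by an application of Handel's Proposition, and you set out to justify that assertion via the Nielsen--Thurston trichotomy applied to the fibered manifold $N=M\smallsetminus\iota(\partial S)$. Your exclusion of the periodic case is correct (no Dehn filling of a Seifert fibered space is hyperbolic). But your proposal has a genuine gap, which you yourself flag: in the reducible case you reduce to the configuration where the vertical essential torus $T_c$ bounds a solid torus $V\subset M$ meeting the binding, you correctly dispose of the sub-case where $L\cap V$ is a core curve, and then you simply assert that ``the contradiction must come from the fact that $S$ is a global section'' without producing any argument. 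Since this is exactly where all the content lies, the proof is incomplete.

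Worse, this gap cannot be filled, because the claim you are trying to prove is false. Let $\psi$ be a pseudo-Anosov map of a closed surface $\hat F$ of genus at least $2$, so that its mapping torus $M$ is closed hyperbolic, and let $\hat\phi$ be isotopic to $\psi$ with $\hat\phi$ equal to the identity on a disc $D\subset\hat F$. Let $X$ be the suspension flow of $\hat\phi$ on $M$, and let $\gamma_1,\gamma_2$ be the periodic orbits over two points $p_1,p_2\in\mathring{D}$. Deleting from the fibre $\hat F\times\{0\}$ two small discs around the $p_i$ and gluing in annuli that spiral onto $\gamma_1$ and $\gamma_2$ with nonzero meridional twisting (the twisting is what makes them transverse to the non-rotating flow) produces an embedded Birkhoff section $S'$ of $X$ in the sense of Definition \ref{def_Birkhoff}. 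Its return map is not pseudo-Anosov: the mapping torus of $f_{X;S'}$ is $N=M\smallsetminus(\gamma_1\cup\gamma_2)$, which contains the essential torus $\partial D\times S^1$ (incompressible from the inside, a twice-punctured disc times $S^1$, and from the outside, and boundary-parallel on neither side), so $N$ is toroidal; since mapping tori of pseudo-Anosov maps are hyperbolic, no fibration of $N$ --- in particular the one defined by $S'$ --- can have pseudo-Anosov monodromy. This realizes precisely the compression-through-$L$ configuration you could not exclude, with $V=D\times S^1$ and $L\cap V$ a trivial $2$-braid. So the corollary cannot be proved by showing that the return map of the \emph{given} section is pseudo-Anosov; note that this is equally a gap in the paper's own one-sentence justification. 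A correct argument would have to either replace the given section by a more efficient one (in the example above the closed fibre $\hat F\times\{0\}$ is itself an embedded Birkhoff section with empty boundary and pseudo-Anosov return map, so the conclusion of the corollary does hold there), or run Handel's argument on the pseudo-Anosov pieces of the Nielsen--Thurston decomposition of $f_{X;S}$.
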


I stated several questions regarding existence and uniqueness of pseudo-Anosov models in the introduction. I will end this note with some further questions.

First, since $1$-pronged pseudo-Anosov flows may behave very differently from true pseudo-Anosov flows, it would be nice to be able to know when one can obtain a true pseudo-Anosov model, as opposed to a $1$-pronged one. The next question is then a more general version of Question \ref{que_existencepAmodel} of the introduction.
\begin{question}
Are there (easily verifiable) conditions ensuring that a flow has a true (not $1$-pronged) pseudo-Anosov model? Can a given flow $X$ admit both a $1$-pronged and a true pseudo-Anosov model?
\end{question}

Some hyperbolic $3$-manifolds do not admit true pseudo-Anosov flows (see, e.g., \cite{CD03,Fen07,RSS03}), at least for those manifolds, one would have to deal with $1$-pronged pseudo-Anosov flows, which are still very much mysterious. %However, part of the reason these flows have not been well-studied is that they do not constraint the topological type of the manifold
\begin{question}
Can one extend Theorem \ref{thm_free_homotopy_data} to $1$-pronged pseudo-Anosov flows on hyperbolic $3$-manifold?
\end{question}

\begin{question}
Suppose $X$ admits a pseudo-Anosov model $\bar X$. Can one describe which subsets of $\cP(X)$ can be realized as $\cP(\bar X)$? Does it help to assume that $X$ is a Reeb flow?
\end{question}

If a Reeb flow $R_{\mathrm{min}}$ as in Question \ref{que_rmin} exists, it would be particularly interesting if one had $\cP(R_{\mathrm{min}})= \cP(\bar R_{\mathrm{min}})$ as one could then use Theorem \ref{thm_free_homotopy_data} to obtain the uniqueness of the pseudo-Anosov model.

A general theme of study that we only brushed upon above when mentioning some of Fenley's results, is the relationships between pseudo-Anosov flows and transverse, or \emph{almost-transverse}\footnote{In the sense of Mosher \cite{Mos91}, i.e., one can change the pseudo-Anosov flow near some of its singular orbits by an operation called a \emph{dynamical blow-up} to obtain a transverse flow.} taut foliations. It seems that this relationship may manifest here:

\begin{question}
Let $X$ be an Anosov flow supported by the bicontact structure $(\xi+,\xi^-)$.
Suppose that a bitransverse reeb flow $R^\pm$ admits a pseudo-Anosov representative $\bar R$. Is $\bar R$ transverse or almost-transverse to the weak (un)stable foliation of $X$?
\end{question}

Finally, as in \cite{CHL}, which gives conditions for a surface diffeomorphism to be the return map to an embedded Birkhoff section of a Reeb flow, it would be interesting to detect which pseudo-Anosov (or $1$-pronged pseudo-Anosov) flows can be a model for a Reeb flow in the sense of Definition \ref{def_pAmodel}. A natural candidate for such a condition comes from the work of Jonathan Zung:
\begin{question}
Let $\phi$ be a pseudo-Anosov flow, is it the pseudo-Anosov model of a Reeb flow if and only if 
the associated stable Hamiltonian structure $(\omega_\phi,\lambda_\phi)$ of \cite[Construction 1.2]{Zung} is contact?
\end{question}

\bibliographystyle{amsalpha}
\bibliography{Heidelberg_refs}
 
\end{document}